\documentclass[11pt]{amsart}

\usepackage[margin=1in]{geometry}
\usepackage{amsmath,amssymb,amsthm,mathtools,mathrsfs,bm}
\usepackage{enumitem}

\usepackage[colorlinks=true, citecolor=blue, linkcolor=red, urlcolor=magenta]{hyperref}

\numberwithin{equation}{section}

\newtheorem{theorem}{Theorem}[section]
\newtheorem{lemma}[theorem]{Lemma}
\newtheorem{proposition}[theorem]{Proposition}
\newtheorem{corollary}[theorem]{Corollary}
\theoremstyle{definition}
\newtheorem{definition}[theorem]{Definition}
\newtheorem{remark}[theorem]{Remark}

\newcommand{\R}{\mathbb R}
\newcommand{\N}{\mathbb N}
\newcommand{\E}{\mathbb E}
\newcommand{\Var}{\operatorname{Var}}
\newcommand{\OT}{\operatorname{OT}}
\newcommand{\Lip}{\operatorname{Lip}}
\newcommand{\diam}{\operatorname{diam}}
\newcommand{\supp}{\operatorname{supp}}
\newcommand{\cP}{\mathcal P}
\newcommand{\cF}{\mathcal F}
\newcommand{\cG}{\mathcal G}
\newcommand{\1}{\mathbf 1}
\newcommand{\DGW}{\mathcal D}
\newcommand{\MGW}{\mathcal M}
\newcommand{\CGW}{\mathcal C}
\newcommand{\QGW}{\mathcal Q}
\newcommand{\Cov}{\mathcal N}
\newcommand{\Pair}[2]{\left\langle #1,#2\right\rangle}

\title{Empirical Convergence of Even-Order Gromov--Wasserstein Functionals}

\author{Vasyl Paliy}
\address{Department of Mathematics, Brown University, Providence, RI 02912, USA}
\email{vasyl\_paliy@brown.edu}

\date{\today}

\begin{document}

\setlength{\parindent}{0pt}

\begin{abstract}
We study the sample complexity of empirical plug-in estimation for the powered even-order
Gromov-Wasserstein functional between compactly supported probability measures on
\(\mathbb R^{d_x}\) and \(\mathbb R^{d_y}\). For every fixed pair of integers \(r,k\ge 1\), we prove
that the two-sample empirical error is bounded at the rate
\[
    n^{-2/\max\{\min\{d_x,d_y\},4\}},
\]
up to a logarithmic factor in the critical case \(\min\{d_x,d_y\}=4\). This extends the known quadratic Euclidean upper rate to the full powered even-order family.  The proof uses a polynomial decomposition of the
even-order GW functional, a generalized duality formula reducing the coupling-dependent
term to a compact family of ordinary optimal transport problems, and entropy estimates for
semiconcave dual potentials. 
\end{abstract}

\maketitle
\vspace{-1.5em}

\section{Introduction}

\subsection{Gromov--Wasserstein distance}

The Gromov--Wasserstein (GW) distance, introduced by M\'emoli \cite{Memoli}, extends optimal transport to the comparison of probability measures supported on different metric spaces. In contrast with classical optimal transport, which requires a cost function defined directly on the product of the two spaces, the GW framework compares the internal pairwise distance structures of the underlying spaces. Given metric measure spaces $(X,d_X,\mu)$ and $(Y,d_Y,\nu)$, the $(p,q)$-GW distance is defined by
\[
D_{p,q}(\mu,\nu)
:=
\inf_{\pi\in\Pi(\mu,\nu)}
\left(
\iint
\bigl|
d_X(x,x')^q-d_Y(y,y')^q
\bigr|^p
\,d\pi(x,y)\,d\pi(x',y')
\right)^{1/p}.
\]
This may be viewed as an $L^p$ relaxation of the Gromov--Hausdorff distance, and it defines a metric on isomorphism classes of metric measure spaces; see \cite{Memoli,Sturm}.

\medskip

\noindent In this paper, we work in the Euclidean setting, where
\[
\mathcal X \subset \mathbb{R}^{d_x},
\qquad
\mathcal Y \subset \mathbb{R}^{d_y}
\]
are compact, and we study the even-order family
\[
\DGW_{r,k}(\mu,\nu)
:=
\inf_{\pi\in\Pi(\mu,\nu)}
\iint
\bigl(
\|x-x'\|^{2k}-\|y-y'\|^{2k}
\bigr)^{2r}
\,d\pi(x,y)\,d\pi(x',y'),
\]
where $r,k\geq 1$ are integers. The even exponents guarantee non-negativity of the integrand and allow a polynomial expansion of the kernel.

\medskip

A basic statistical question is how well $D_{p,q}(\mu,\nu)$ is approximated by its empirical plug-in version. Unlike classical optimal transport, however, the GW functional is quadratic in the coupling, since the same coupling appears in both integration variables. As a result, standard Kantorovich duality does not apply directly; compare \cite{Villani}. This makes quantitative analysis of empirical GW distances substantially more delicate than in the classical setting. 

\smallskip

A major recent advance was obtained by Zhang, Goldfeld, Mroueh, and Sriperumbudur \cite{ZhangEtAl}. In the quadratic Euclidean case $(2,2)$, for compactly supported measures on $\mathbb{R}^{d_x}$ and $\mathbb{R}^{d_y}$, they established 
\[
\mathbb{E}\Bigl|
D_{2,2}(\mu,\nu)^2-D_{2,2}(\widehat{\mu}_n,\widehat{\nu}_n)^2
\Bigr|
=
\begin{cases}
O(n^{-1/2}), & d_x\wedge d_y<4,\\
O(n^{-1/2}\log n), & d_x\wedge d_y=4,\\
O(n^{-2/(d_x\wedge d_y)}), & d_x\wedge d_y>4,
\end{cases}
\]
and showed that this rate is sharp up to the logarithmic factor at the critical dimension. They also considered higher powers and observed that, for the standard $(2,2k)$-GW distance, their direct argument gives the slower upper bound
\[
O\!\left(n^{-1/(d_x\wedge d_y)}\right).
\]
In the present paper, we show that for every fixed pair of integers $r,k\ge1$, the empirical plug-in estimator for the powered functional $D_{2r,2k}^{2r}$ satisfies the same dimension-dependent upper bound as in the quadratic case. The main point is that, after the polynomial dual reduction, the relevant optimal potentials still belong to uniformly semiconcave classes on bounded supports, which gives the sharper Bronshtein-type entropy bound.

\subsection{Notation}
 
We write $\|\cdot\|$ for the Euclidean norm on $\mathbb{R}^d$ and
$\langle \cdot, \cdot \rangle$ for the standard inner product.
For $x \in \mathbb{R}^d$ and $r > 0$, let $B_d(x,r) := \{y \in \mathbb{R}^d : \|y - x\| \leq r\}$
denote the closed Euclidean ball of radius $r$ centered at $x$.
Given a Borel set $E \subseteq \mathbb{R}^d$, we write $\mathcal{P}(E)$ for the collection of
Borel probability measures supported on $E$,
and for $m \geq 1$ we let $\mathcal{P}_m(\mathbb{R}^d)$ denote the subset of Borel probability
measures on $\mathbb{R}^d$ with finite $m$-th moment, i.e.,
$\int_{\mathbb{R}^d} \|x\|^m \, d\mu(x) < \infty$.
For $\mu \in \mathcal{P}(\mathbb{R}^{d_x})$ and $\nu \in \mathcal{P}(\mathbb{R}^{d_y})$,
we denote by $\Pi(\mu,\nu)$ the set of all couplings of $\mu$ and $\nu$,
that is, all Borel probability measures $\pi$ on $\mathbb{R}^{d_x} \times \mathbb{R}^{d_y}$
whose first and second marginals are $\mu$ and $\nu$, respectively.
If $T : \mathbb{R}^d \to \mathbb{R}^{d'}$ is a measurable map and $\mu \in \mathcal{P}(\mathbb{R}^d)$,
we write $T_{\#}\mu$ for the pushforward measure defined by $T_{\#}\mu(A) = \mu(T^{-1}(A))$.

\medskip

For a signed Borel measure $\sigma$ and a bounded measurable function $h$, write
\[
\Pair{\sigma}{h}:=\int h\,d\sigma
\]
whenever the integral is well defined.
The support of a measure $\mu \in \mathcal{P}(\mathbb{R}^d)$ is denoted $\mathrm{supp}(\mu)$,
and its diameter is $\mathrm{diam}(S) := \sup_{x,y \in S} \|x - y\|$.
Given samples $X_1, \dots, X_n$ drawn independently from $\mu$, the empirical measure is
$\widehat{\mu}_n := \frac{1}{n} \sum_{i=1}^n \delta_{X_i}$,
where $\delta_x$ is the Dirac mass at $x$.

\medskip

For a function $f : D \to \mathbb{R}$ on a domain $D \subseteq \mathbb{R}^d$, we write
$\|f\|_{L^\infty(D)} := \sup_{x \in D} |f(x)|$ for the supremum norm and
$\|f\|_{\mathrm{Lip}(D)} := \sup_{x \neq y} \frac{|f(x) - f(y)|}{\|x - y\|}$
for the Lipschitz seminorm.
For a function class $\mathcal{F}$ and a metric $d$, we denote by
$\Cov(\varepsilon, \mathcal{F}, d)$ the $\varepsilon$-covering number of $\mathcal{F}$
with respect to $d$, i.e., the smallest number of $d$-balls of radius $\varepsilon$
needed to cover $\mathcal{F}$.

\medskip
 
For a multi-index $\alpha = (\alpha_1, \dots, \alpha_d) \in \mathbb{N}_0^d$,
we write $|\alpha| = \alpha_1 + \cdots + \alpha_d$ and
$x^\alpha = x_1^{\alpha_1} \cdots x_d^{\alpha_d}$.
For a coupling $\pi\in \Pi(\mu,\nu)$ and multi-indices $\alpha,\gamma$, write
\[
M_{\alpha,\gamma}(\pi):=\int x^\alpha y^\gamma\,d\pi(x,y),
\]
and for marginal moments write
\[
M^X_\alpha(\mu):=\int x^\alpha\,d\mu(x),
\qquad
M^Y_\gamma(\nu):=\int y^\gamma\,d\nu(y).
\]
For the even-order GW functional, set
\[
\DGW_{r,k}(\mu,\nu):=D_{2r,2k}(\mu,\nu)^{2r}.
\]
We also use
\[
d_\ast:=d_x\wedge d_y,
\qquad
\rho_n(d):=n^{-2/(d\vee 4)}\bigl(\log(en)\bigr)^{\1_{\{d=4\}}}.
\]
We use $a \vee b := \max\{a,b\}$ and $a \wedge b := \min\{a,b\}$ for the maximum and minimum
of two real numbers. Throughout, we use $C$ and $C_{r,k,d_x,d_y}$ (and similar subscripted variants) to denote
positive constants whose values may change from line to line but depend only on the indicated parameters.

\subsection{Main Result}

Our main theorem shows that the empirical rate is governed by the smaller of the two ambient
dimensions. This is in the same spirit as the lower-complexity phenomenon for classical optimal
transport between measures on different spaces proved by Hundrieser, Staudt, and Munk
\cite{HundrieserStaudtMunk}.

\begin{theorem}\label{thm:main}
Let $\mu\in \cP(\mathcal X)$ and $\nu\in \cP(\mathcal Y)$, where
\[
    \mathcal X\subset \R^{d_x},
    \qquad
    \mathcal Y\subset \R^{d_y}
\]
are compact. Set
\[
    R:=\diam(\mathcal X)\vee \diam(\mathcal Y),
    \qquad
    d_\ast:=d_x\wedge d_y.
\]
By translation invariance, we may assume
\[
    \mathcal X\subset B_{d_x}(0,R),
    \qquad
    \mathcal Y\subset B_{d_y}(0,R).
\]
Let
\[
    \widehat\mu_n=\frac1n\sum_{i=1}^n\delta_{X_i},
    \qquad
    \widehat\nu_n=\frac1n\sum_{j=1}^n\delta_{Y_j},
\]
where $(X_i)_{i\ge1}$ are i.i.d.\ with law $\mu$, $(Y_j)_{j\ge1}$ are i.i.d.\ with law $\nu$, and the two
samples are independent. Then, for every pair of integers $r,k\ge1$,
\[
\E\Bigl|
\DGW_{r,k}(\mu,\nu)
-
\DGW_{r,k}(\widehat\mu_n,\widehat\nu_n)
\Bigr|
\lesssim_{r,k,d_x,d_y}
R^{4kr}
\begin{cases}
n^{-1/2}, & d_\ast<4,\\[2mm]
n^{-1/2}\log(en), & d_\ast=4,\\[2mm]
n^{-2/d_\ast}, & d_\ast>4.
\end{cases}
\]
Equivalently, if
\[
    \rho_n(d):=
    n^{-2/(d\vee 4)}\log(en)^{\mathbf 1_{\{d=4\}}},
\]
then
\[
\E\Bigl|
\DGW_{r,k}(\mu,\nu)
-
\DGW_{r,k}(\widehat\mu_n,\widehat\nu_n)
\Bigr|
\lesssim_{r,k,d_x,d_y}
R^{4kr}\rho_n(d_\ast).
\]
\end{theorem}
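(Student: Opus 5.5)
The plan follows the Zhang--Goldfeld--Mroueh--Sriperumbudur strategy, with their quadratic decomposition replaced by a polynomial one.

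\textbf{Step 1: polynomial decomposition.} Expand
\[
\bigl(\|x-x'\|^{2k}-\|y-y'\|^{2k}\bigr)^{2r}=\sum_{j=0}^{2r}\binom{2r}{j}(-1)^j\|x-x'\|^{2k(2r-j)}\|y-y'\|^{2kj},
\]
and expand each power of $\|x-x'\|^2=\|x\|^2-2\langle x,x'\rangle+\|x'\|^2$ into monomials. Integrating against $\pi\otimes\pi$ gives two kinds of terms:
\begin{itemize}
\item terms depending only on the marginals, namely the pure $j=0$ and $j=2r$ terms; these are products of $M^X_\alpha(\mu)$ or of $M^Y_\gamma(\nu)$;
\item cross terms, which are finite sums of products $M_{\alpha,\gamma}(\pi)M_{\alpha',\gamma'}(\pi)$ with $|\alpha|,|\gamma|\le 4kr$.
\end{itemize}
This yields $\DGW_{r,k}(\mu,\nu)=S(\mu,\nu)+\inf_{\pi}Q(M(\pi))$. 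Here $S$ is a polynomial in marginal moments, and $Q$ is a quadratic form in the finite vector $M(\pi)=(M_{\alpha,\gamma}(\pi))$ of mixed moments. Only the indices in which both $\alpha$ and $\gamma$ are nonzero matter; the remaining indices again reduce to marginal moments.

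The difference $S(\mu,\nu)-S(\widehat\mu_n,\widehat\nu_n)$ is controlled by $\E|M^X_\alpha(\widehat\mu_n)-M^X_\alpha(\mu)|\lesssim R^{|\alpha|}n^{-1/2}$. Combined with boundedness of all moments by powers of $R$, this gives $R^{4kr}n^{-1/2}\le R^{4kr}\rho_n(d_\ast)$.

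\textbf{Step 2: generalized duality.} Write the quadratic form via a variational identity. After subtracting a suitable multiple of $\|M\|^2$ (a convex piece), $Q(M)=\sup_{A\in\mathcal A}\{\ell_A(M)-q(A)\}$ or $\inf_A$, for a compact parameter set $\mathcal A\subset\R^{N}$ with $\operatorname{diam}\lesssim R^{4kr}$. The goal is to reach
\[
\inf_\pi Q(M(\pi))=\inf_{A\in\mathcal A}\Bigl\{q(A)+\OT_{c_A}(\mu,\nu)\Bigr\},\qquad c_A(x,y)=\sum_{\alpha,\gamma}A_{\alpha,\gamma}x^\alpha y^\gamma .
\]
This is the analogue of the $(2,2)$ duality formula, presumably established earlier in the paper; I would invoke it, or else prove it by a min--max exchange over the convex compact set $\Pi(\mu,\nu)$. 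The same formula holds for the empirical measures with the same $\mathcal A$. It follows that
\[
|\inf_\pi Q-\inf_\pi\widehat Q|\le\sup_{A\in\mathcal A}|\OT_{c_A}(\mu,\nu)-\OT_{c_A}(\widehat\mu_n,\widehat\nu_n)|.
\]

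\textbf{Step 3: uniform empirical OT bound.} This is the main obstacle. For each $A$ the cost $c_A$ is a polynomial with uniformly bounded coefficients on $B(0,R)\times B(0,R)$. Its $C^2$ norm in $x$ is uniformly bounded, so $c_A$-concave potentials $f=g^{c_A}$ are uniformly Lipschitz, bounded, and semiconcave: $f-\Lambda\|x\|^2$ is concave with $\Lambda\lesssim R^{4kr-2}$ after normalization. The same holds in $y$. Using the standard bound
\[
|\OT(\mu,\nu)-\OT(\widehat\mu_n,\widehat\nu_n)|\le\sup_{f\in\cF}|\langle\mu-\widehat\mu_n,f\rangle|+\sup_{g\in\cG}|\langle\nu-\widehat\nu_n,g\rangle|,
\]
with $\cF,\cG$ the unions over $A\in\mathcal A$ of these potential classes, the problem becomes bounding two empirical processes. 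Two further points are needed:
\begin{itemize}
\item To get $d_\ast$ rather than $\max$, I would use the Hundrieser--Staudt--Munk lower-complexity device. On the smaller-dimensional side, say $d_y=d_\ast$, take $g$ semiconcave on $\R^{d_y}$. On the other side, write $f=g^{c_A}$, so that $f$ is determined by $g$. Controlling the $\mu$-process through the covering of $\cG$ then requires a Lipschitz-in-$\|\cdot\|_\infty$ dependence of $g\mapsto g^{c_A}$, which holds.
\item Bronshtein's entropy bound for bounded Lipschitz convex (semiconcave) functions on a $d_\ast$-dimensional ball gives $\log\Cov(\varepsilon,\cG,\|\cdot\|_\infty)\lesssim\varepsilon^{-d_\ast/2}$. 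The union over the compact $\mathcal A$ adds only $N\log(1/\varepsilon)$ by Lipschitz dependence on $A$.
\end{itemize}
Dudley's entropy integral, truncated at $n^{-1/d_\ast}$-scale when $d_\ast\ge4$, then yields $n^{-1/2}$, $n^{-1/2}\log n$, or $n^{-2/d_\ast}$. Scaling by $R$ (homogeneity of degree $4kr$) restores the factor $R^{4kr}$. Combining Steps 1--3 proves the theorem.
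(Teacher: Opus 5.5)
Your plan is essentially the paper's proof: the marginal/coupling split of the expanded kernel, a minimax duality that turns the coupling part into a compact, support-uniform family of polynomial OT costs, the Kantorovich reduction with semiconcave potentials on the lower-dimensional side and $c$-transforms on the other, Bronshtein entropy plus a finite-dimensional net over the parameters, Dudley, and rescaling. Skipping the paper's centering of the empirical measures is harmless, since the decomposition and the uniform parameter boxes only need bounded supports.

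There are two slips to fix:
\begin{enumerate}
\item The quadratic form in the mixed moments is in general indefinite, so the duality must be an inf--sup formula rather than a single infimum. The paper obtains $4\inf_{v\in\Theta_-}\sup_{u\in\Theta_+}\{-\|u\|^2+\|v\|^2+\OT_{c_{u,v}}(\mu,\nu)\}$ by diagonalizing $Q$ and applying Sion's theorem. The resulting bound by $\sup_\theta|\OT_{c_\theta}(\mu,\nu)-\OT_{c_\theta}(\widehat\mu_n,\widehat\nu_n)|$ survives unchanged.
\item The Dudley truncation level for $d_\ast\ge 4$ is $\alpha\asymp n^{-2/d_\ast}$, not $n^{-1/d_\ast}$.
\end{enumerate}
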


\bigskip

\begin{remark}
For $(r,k)=(1,1)$, Theorem~\ref{thm:main} matches the known sharp quadratic GW rate established in \cite{ZhangEtAl}. More generally, it shows that the same upper-bound scale driven by the smaller ambient dimension persists throughout the even-order family $(2r,2k)$. For $k\ge2$, this improves the rate obtained by the direct Lipschitz-potential argument in \cite{ZhangEtAl}, which gives $O(n^{-1/(d_x\wedge d_y)})$ for the standard $(2,2k)$ case.  However, unlike the quadratic case treated in \cite{ZhangEtAl}, 
we only prove a matching parametric lower bound in the
case \(d_x\wedge d_y<4\). Matching lower bounds in the critical and high-dimensional regimes \(d_x\wedge d_y\ge 4\) remain open for the general even-order family considered here.
\end{remark}

\subsection{Proof Strategy}

We outline the proof of Theorem~\ref{thm:main}. The proof proceeds as follows.
\begin{enumerate}[label=(\roman*), leftmargin=2.5em]
\item \emph{Normalize the geometry.}
Using translation invariance and homogeneity of the GW functional, we reduce to the case where
both measures are centered and supported in fixed unit balls. The original scale is recovered at
the end, which is how we get the factor $R^{4kr}$.

\item \emph{Center the empirical measures.}
The empirical measures are replaced by their centered versions. Since GW is invariant under
independent translations of the two spaces, this does not change the GW value itself. The only
additional bookkeeping comes from the polynomial marginal terms, and these errors are controlled
at the parametric rate $n^{-1/2}$.

\item \emph{Separate marginal and coupling dependence.}
Expanding the even polynomial kernel gives a decomposition
\[
\mathcal \DGW_{r,k}(\mu,\nu)
=
\mathcal M_{r,k}(\mu,\nu)
+
\mathcal C_{r,k}(\mu,\nu).
\]
Here $\mathcal M_{r,k}$ depends only on finitely many marginal moments of $\mu$ and $\nu$,
and $\mathcal C_{r,k}$ contains all terms that depend on the coupling.

\item \emph{Control the marginal part.}
Because $\mathcal M_{r,k}$ is a fixed polynomial in finitely many moments, its empirical error is
bounded by standard moment estimates. This gives an $O(n^{-1/2})$ contribution, which is
absorbed by $\rho_n(d_\ast)$.

\item \emph{Convert the coupling part to ordinary OT.}
The duality argument rewrites $\mathcal C_{r,k}$ as a minimax expression over a compact
parameter set:
\[
\mathcal C_{r,k}(\mu,\nu)
=
4\inf_{v\in\Theta_-}\sup_{u\in\Theta_+}
\left\{
-\|u\|^2+\|v\|^2+\operatorname{OT}_{c_{u,v}}(\mu,\nu)
\right\},
\]
where $\{c_{u,v}\}$ is a finite-dimensional family of polynomial costs. The compactness of
$\Theta_+\times\Theta_-$ is uniform over the normalized measures, so the population and empirical
problems are compared using the same cost family.

\item \emph{Reduce to empirical processes over dual potentials.}
For each fixed polynomial cost $c_\theta$, Kantorovich duality bounds
\[
\left|
\operatorname{OT}_{c_\theta}(\mu,\nu)
-
\operatorname{OT}_{c_\theta}(\widetilde\mu_n,\widetilde\nu_n)
\right|
\]
by empirical-process suprema over first and second dual potentials. The first potentials lie in a
uniformly bounded, uniformly Lipschitz, uniformly semiconcave class $\mathcal F_X$. The second
potentials are obtained by $c_\theta$-transforms and therefore lie in
\[
\mathcal G_Y
:=
\bigcup_{\theta\in\Theta}
\mathsf C_\theta(\mathcal F_X).
\]

\item \emph{Apply entropy bounds and use the lower dimension.}
Without loss of generality, assume $d_x=d_\ast\le d_y$. The semiconcavity of
$\mathcal F_X$ gives the entropy estimate
\[
\log \mathcal N(\varepsilon,\mathcal F_X,\|\cdot\|_\infty)
\lesssim
\varepsilon^{-d_\ast/2},
\]
via Bronshtein-type covering bounds for convex functions. The class $\mathcal G_Y$ is controlled
by combining this entropy estimate with a finite-dimensional covering of the compact parameter
set $\Theta$. Dudley's entropy integral then gives
\[
\E \sup_{f\in\mathcal F_X}
\left|\langle \mu-\widetilde\mu_n,f\rangle\right|
+
\E \sup_{g\in\mathcal G_Y}
\left|\langle \nu-\widetilde\nu_n,g\rangle\right|
\lesssim
\rho_n(d_\ast).
\]
which proves the theorem. 
\end{enumerate}

The rest of the paper implements these steps. Section~\ref{sec:setup} collects the required
preliminaries: semiconcavity entropy bounds, the polynomial decomposition of the GW kernel, and
the generalized dual representation of the coupling-dependent term. Section~\ref{sec:proof-main}
proves Theorem~\ref{thm:main}. Appendix~\ref{app:duality} proves the generalized duality formula,
Appendix~\ref{app:marginal} contains the polynomial bookkeeping for the marginal term, and
Appendix~\ref{app:covering} proves the auxiliary covering estimates for the second-potential class.

\medskip

\section{Background and preliminaries}\label{sec:setup}

\subsection{Classical optimal transport}\label{sec:classical-ot}

We briefly review the classical optimal transport problem;
for a thorough treatment see~\cite{Villani}.
Let $\mathcal{X}$, $\mathcal{Y}$ be Polish spaces and let
$c : \mathcal{X} \times \mathcal{Y} \to \mathbb{R}$ be a lower semicontinuous
cost function.
The optimal transport cost between $\mu \in \mathcal{P}(\mathcal{X})$ and
$\nu \in \mathcal{P}(\mathcal{Y})$ with respect to $c$ is
\begin{equation}\label{eq:ot-primal}
  \OT_c(\mu, \nu)
  := \inf_{\pi \in \Pi(\mu,\nu)} \int_{\mathcal{X} \times \mathcal{Y}}
  c(x,y) \, d\pi(x,y).
\end{equation}
The special case $\mathcal{X} = \mathcal{Y} = \mathbb{R}^d$ with
$c(x,y) = \|x - y\|^p$, $p \geq 1$, gives rise to the $p$-Wasserstein distance
$W_p(\mu,\nu) := \OT_c(\mu,\nu)^{1/p}$, which metrizes weak convergence
together with convergence of $p$-th moments on $\mathcal{P}_p(\mathbb{R}^d)$.

\medskip

Suppose that $c(x,y) \geq a(x) + b(y)$ for some upper semicontinuous functions
$(a,b) \in L^1(\mu) \times L^1(\nu)$.
Then OT admits the Kantorovich dual representation~\cite{Villani}:
\begin{equation}\label{eq:ot-dual}
  \OT_c(\mu, \nu)
  = \sup_{\substack{(\varphi, \psi) \in C_b(\mathcal{X}) \times C_b(\mathcal{Y}) \\
  \varphi(x) + \psi(y) \leq c(x,y)}}
  \left\{ \int \varphi \, d\mu + \int \psi \, d\nu \right\}.
\end{equation}
Given $\varphi \in C_b(\mathcal{X})$, the $c$-transform
$\varphi^c(y) := \inf_{x \in \mathcal{X}} \{ c(x,y) - \varphi(x) \}$
and, symmetrically, the $\bar{c}$-transform
$\psi^{\bar{c}}(x) := \inf_{y \in \mathcal{Y}} \{ c(x,y) - \psi(y) \}$
provide dual feasible pairs, and the supremum in~\eqref{eq:ot-dual} can be restricted
to pairs satisfying $\psi = \varphi^c$ and $\varphi = \psi^{\bar{c}}$
without changing the value.

\medskip

The dual form is the key link to empirical process theory.
If $\mathcal{F}$ and $\mathcal{G}$ are function classes containing all optimal
first and second potentials, then
\begin{equation}\label{eq:ot-empirical-reduction}
  \bigl| \OT_c(\mu, \nu) - \OT_c(\mu', \nu') \bigr|
  \leq \sup_{\varphi \in \mathcal{F}} \bigl|\Pair{\mu - \mu'}{\varphi}\bigr|
  + \sup_{\psi \in \mathcal{G}} \bigl|\Pair{\nu - \nu'}{\psi}\bigr|.
\end{equation}
When $(\mu', \nu') = (\widehat{\mu}_n, \widehat{\nu}_n)$, the right-hand side
consists of suprema of empirical processes whose convergence rate
is governed by the metric entropy of $\mathcal{F}$ and $\mathcal{G}$
via Dudley's entropy integral inequality~\cite{Dudley}.
Regularity of optimal potentials, i.e. Lipschitz continuity
and semiconcavity inherited from the cost, gives covering number
bounds that determine the rate;
see~\cite{Dudley, ManoleNilesWeed, HundrieserStaudtMunk} for the
development of this program in the classical OT setting.
We use similar strategies to prove the main result.

\subsection{Semiconcavity and covering numbers}

\begin{definition}\label{def:semicolon}
Let $D\subset \R^d$ be convex. A function $f:D\to \R$ is \emph{$\Lambda$-semiconcave} if
\[
x\longmapsto f(x)-\frac{\Lambda}{2}\|x\|^2
\]
is concave on $D$. If $S\subset D$ is arbitrary, we say that $f:S\to \R$ is \emph{$\Lambda$-semiconcave on
$S$ relative to $D$} if it admits an extension to $D$ that is $\Lambda$-semiconcave there.
\end{definition}

\begin{lemma}\label{lem:semicolon-entropy}
Let $D=B_d(0,R_0)$. Fix $\Lambda,L_{\mathrm{sc}},C_{\mathrm{sup}}>0$, and define
\[
\cF_{\mathrm{sc}}(D;\Lambda,L_{\mathrm{sc}},C_{\mathrm{sup}})
:=
\Bigl\{
f:D\to \R:
f \text{ is }\Lambda\text{-semiconcave},
\ \|f\|_{\Lip(D)}\le L_{\mathrm{sc}},
\ \|f\|_{L^\infty(D)}\le C_{\mathrm{sup}}
\Bigr\}.
\]
Then
\[
\log \Cov\bigl(\varepsilon,\cF_{\mathrm{sc}}(D;\Lambda,L_{\mathrm{sc}},C_{\mathrm{sup}}),\|\cdot\|_\infty\bigr)
\le
C_d\,\varepsilon^{-d/2},
\]
where $C_d$ depends only on $d,\Lambda,L_{\mathrm{sc}},C_{\mathrm{sup}},R_0$.
\end{lemma}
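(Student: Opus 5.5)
The plan is to reduce to Bronshtein's classical entropy bound for bounded, uniformly Lipschitz convex functions on a convex body. Given $f$ in the class, set
\[
g(x):=\frac{\Lambda}{2}\|x\|^2-f(x),
\]
which is convex on $D=B_d(0,R_0)$ by Definition~\ref{def:semicolon}. The map $f\mapsto g$ is an isometry for $\|\cdot\|_\infty$, since it is a fixed shift followed by a sign change. Hence
\[
\Cov\bigl(\varepsilon,\cF_{\mathrm{sc}},\|\cdot\|_\infty\bigr)=\Cov\bigl(\varepsilon,\cG,\|\cdot\|_\infty\bigr),
\qquad \cG:=\{\tfrac{\Lambda}{2}\|\cdot\|^2-f: f\in\cF_{\mathrm{sc}}\}.
\]
It therefore suffices to bound the entropy of $\cG$.

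The next step records uniform bounds on $\cG$. Each $g\in\cG$ is convex, with
\[
\|g\|_{L^\infty(D)}\le \tfrac{\Lambda}{2}R_0^2+C_{\mathrm{sup}},
\qquad
\|g\|_{\Lip(D)}\le \Lambda R_0+L_{\mathrm{sc}},
\]
because $\frac{\Lambda}{2}\|x\|^2$ has gradient of norm at most $\Lambda R_0$ on $D$. So $\cG$ is contained in the class of convex functions on the compact convex set $D$ with uniformly bounded sup-norm and Lipschitz constant.

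For that class, Bronshtein's theorem (see also Guntuboyina--Sen, or Dudley's book) gives
\[
\log\Cov(\varepsilon,\cdot,\|\cdot\|_\infty)\le C\,\varepsilon^{-d/2}
\]
for $0<\varepsilon\le\varepsilon_0$. The constant $C$ depends only on $d$, the diameter of $D$, and the sup and Lipschitz bounds, so here only on $d,\Lambda,L_{\mathrm{sc}},C_{\mathrm{sup}},R_0$. For $\varepsilon$ larger than twice the uniform sup bound, a single ball centered at zero covers the class, so the covering number is $1$. In the intermediate range the covering number is monotone in $\varepsilon$, so it is controlled by its value at $\varepsilon_0$, which is a constant. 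This constant can be absorbed into $C_d\varepsilon^{-d/2}$ after enlarging $C_d$, because $\varepsilon^{-d/2}$ is bounded below on bounded ranges of $\varepsilon$. The result then holds for all $\varepsilon>0$.

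The main obstacle is quoting the correct form of Bronshtein's theorem. The original statement requires uniform Lipschitz bounds on the whole convex body; that is what we have here, since $\|f\|_{\Lip(D)}\le L_{\mathrm{sc}}$ on all of $D$. The versions without Lipschitz constraints only hold on interior subdomains. If a self-contained argument is preferred, I would reproduce Bronshtein's scheme directly:
\begin{enumerate}
\item cover $D$ by a grid of mesh $\sqrt{\varepsilon}$;
\item approximate each convex $g$ on each cell by its supporting affine function at the cell center;
\item use the Lipschitz bound to control the gradients.
\end{enumerate}
The catch is that step 2 requires a curvature bound to make the affine error $O(\varepsilon)$ on cells of diameter $\sqrt{\varepsilon}$, and a merely convex, Lipschitz $g$ does not obey such a bound everywhere. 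Bronshtein's actual proof avoids this by working through the duality between convex functions and convex bodies, and that delicacy is exactly why I would cite the theorem rather than reprove it.
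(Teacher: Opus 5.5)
Your proposal is correct and follows the same route as the paper. Both arguments use the sup-norm isometry $f\mapsto \frac{\Lambda}{2}\|x\|^2-f$ to map the class into uniformly bounded, uniformly Lipschitz convex functions on $D$, with the same bounds $C_{\mathrm{sup}}+\frac{\Lambda R_0^2}{2}$ and $L_{\mathrm{sc}}+\Lambda R_0$, and then apply Bronshtein's entropy bound. Your handling of $\varepsilon>\varepsilon_0$, where Bronshtein's bound is usually stated only for small $\varepsilon$, is a minor point the paper leaves implicit.
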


\begin{proof}
Define $T:f\mapsto g$ by
\[
g(x):=\frac{\Lambda}{2}\|x\|^2-f(x).
\]
If $f$ is $\Lambda$-semiconcave, then $g$ is convex. Moreover,
\[
\|g\|_{L^\infty(D)}
\le
C_{\mathrm{sup}}+\frac{\Lambda R_0^2}{2}
=:C_{\mathrm{conv}},
\qquad
\|g\|_{\Lip(D)}
\le
L_{\mathrm{sc}}+\Lambda R_0
=:L_{\mathrm{conv}}.
\]
Since $T$ is an isometry for $\|\cdot\|_\infty$,
\[
\Cov\bigl(\varepsilon,\cF_{\mathrm{sc}}(D;\Lambda,L_{\mathrm{sc}},C_{\mathrm{sup}}),\|\cdot\|_\infty\bigr)
=
\Cov\bigl(\varepsilon,T(\cF_{\mathrm{sc}}(D;\Lambda,L_{\mathrm{sc}},C_{\mathrm{sup}})),\|\cdot\|_\infty\bigr).
\]
Also,
\[
T(\cF_{\mathrm{sc}}(D;\Lambda,L_{\mathrm{sc}},C_{\mathrm{sup}}))
\subset
\cF_{conv}(D;C_{\mathrm{conv}},L_{\mathrm{conv}}),
\]
where
\[
\cF_{conv}(D;C_{\mathrm{conv}},L_{\mathrm{conv}})
:=
\Bigl\{
g:D\to \R:
g \text{ is convex},
\ \|g\|_{L^\infty(D)}\le C_{\mathrm{conv}},
\ \|g\|_{\Lip(D)}\le L_{\mathrm{conv}}
\Bigr\}.
\]
Bronshtein's entropy bound for convex functions \cite{Bronshtein}; see also
\cite{GaoWellner}; therefore gives
\[
\log \Cov\bigl(\varepsilon,\cF_{\mathrm{sc}}(D;\Lambda,L_{\mathrm{sc}},C_{\mathrm{sup}}),\|\cdot\|_\infty\bigr)
\le
\log \Cov\bigl(\varepsilon,\cF_{conv}(D;C_{\mathrm{conv}},L_{\mathrm{conv}}),\|\cdot\|_\infty\bigr)
\le
C_d\,\varepsilon^{-d/2}.
\]
\end{proof}

\subsection{Decomposition into a marginal term and a transport term}

For later use we expand the GW kernel and postpone the detailed bookkeeping to
Appendix~\ref{app:marginal}. Set
\[
K_X(x,x'):=\|x-x'\|^{2k},
\qquad
K_Y(y,y'):=\|y-y'\|^{2k}.
\]
Then
\[
\bigl(K_X(x,x')-K_Y(y,y')\bigr)^{2r}
=
\sum_{s=0}^{2r}(-1)^s\binom{2r}{s}\bigl(K_X(x,x')\bigr)^s\bigl(K_Y(y,y')\bigr)^{2r-s}.
\]
After expanding $\bigl(K_X^s(x,x')\bigr)^s$ and $\bigl(K_Y(y,y')\bigr)^{2r-s}$ by the multinomial theorem, every term in the kernel becomes a product
of two mixed moments of the form
\[
M_{\alpha,\gamma}(\pi)\,M_{\beta,\delta}(\pi).
\]

\begin{definition}\label{def:marginal-only}
A term in this expansion is called \emph{marginal-only} if each of its two factors depends only on the
marginals of $\pi$; equivalently,
\[
(\alpha=0 \text{ or } \gamma=0)
\quad\text{and}\quad
(\beta=0 \text{ or } \delta=0).
\]
Let $\MGW_{r,k}(\mu,\nu)$ be the sum of all marginal-only terms. Let $\QGW_{r,k}(\pi)$ denote the sum
of the remaining terms, and define
\[
\CGW_{r,k}(\mu,\nu)
:=
\inf_{\pi\in \Pi(\mu,\nu)}\QGW_{r,k}(\pi).
\]
\end{definition}

By construction,
\begin{equation}\label{eq:marginal-coupling-decomp}
\DGW_{r,k}(\mu,\nu)
=
\MGW_{r,k}(\mu,\nu)+\CGW_{r,k}(\mu,\nu).
\end{equation}
The detailed polynomial description of $\MGW_{r,k}$ appears in Appendix~\ref{app:marginal}. The main point
for the body of the paper is that $\MGW_{r,k}$ depends only on finitely many marginal moments, while
$\CGW_{r,k}$ carries all of the coupling dependence.

\subsection{Generalized duality and the bounded-support reduction}

The following dual representation is the even-order analogue of the polynomial duality mechanism in \cite{ZhangEtAl}. We include the proof in Appendix~\ref{app:duality} to make the dependence on $(r,k)$ and the bounded-support parameter sets explicit.

\medskip

For any measurable cost $c:\R^{d_x}\times \R^{d_y}\to \R$ that is integrable against every
$\pi\in \Pi(\mu,\nu)$, set
\[
\OT_c(\mu,\nu)
:=
\inf_{\pi\in \Pi(\mu,\nu)}\int c\,d\pi.
\]

\begin{theorem}[Generalized duality for $(2r,2k)$]\label{thm:duality-main}
Let $r,k\in \N$, and let
\[
\mu\in \cP_{4kr}(\R^{d_x}),
\qquad
\nu\in \cP_{4kr}(\R^{d_y}).
\]
Then there exist
\begin{itemize}[leftmargin=2em]
\item[a.] finitely many polynomials
\[
P_1,\dots,P_J:\R^{d_x}\times \R^{d_y}\to \R,
\]
each of degree at most $4kr$;
\item[b.] an integer $\ell\in \{0,\dots,J\}$;
\item[c.] compact sets
\[
\Theta_+\subset \R^\ell,
\qquad
\Theta_-\subset \R^{J-\ell};
\]
\end{itemize}
such that
\[
\DGW_{r,k}(\mu,\nu)
=
\MGW_{r,k}(\mu,\nu)
+
4\inf_{v\in \Theta_-}\sup_{u\in \Theta_+}
\Bigl\{
-\|u\|^2+\|v\|^2+\OT_{c_{u,v}}(\mu,\nu)
\Bigr\},
\]
where
\[
c_{u,v}(x,y)
=
\sum_{i=1}^{\ell}u_i P_i(x,y)
-
\sum_{j=1}^{J-\ell}v_j P_{\ell+j}(x,y).
\]
If we write
\[
\Theta:=\Theta_+\times \Theta_-,
\qquad
c_\theta:=c_{u,v}\quad\text{for }\theta=(u,v)\in \Theta,
\]
then
\[
\CGW_{r,k}(\mu,\nu)
=
4\inf_{v\in \Theta_-}\sup_{u\in \Theta_+}
\Bigl\{
-\|u\|^2+\|v\|^2+\OT_{c_{u,v}}(\mu,\nu)
\Bigr\}.
\]
\end{theorem}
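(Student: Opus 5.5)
Since $\DGW_{r,k}=\MGW_{r,k}+\CGW_{r,k}$ by \eqref{eq:marginal-coupling-decomp}, it suffices to establish the minimax formula for $\CGW_{r,k}$. The plan follows the quadratic mechanism of \cite{ZhangEtAl}: write $\QGW_{r,k}(\pi)$ as a quadratic form in a finite vector of mixed moments, diagonalize it into a difference of squares, and linearize each square variationally.

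\emph{Quadratic form.} Expanding the kernel as in Definition~\ref{def:marginal-only}, every non-marginal-only term is a product $M_{\alpha,\gamma}(\pi)M_{\beta,\delta}(\pi)$ in which at least one factor is a genuinely mixed moment. Terms where the other factor is a marginal moment ($\beta=0$ or $\delta=0$) are linear in $\pi$ with fixed coefficients $M^X_\beta(\mu)$ or $M^Y_\delta(\nu)$. Collect all mixed monomials $x^\alpha y^\gamma$ with $\alpha,\gamma\neq0$ and $|\alpha|+|\gamma|\le 4kr$ into a finite list $m(x,y)\in\R^N$, and write
\[
\QGW_{r,k}(\pi)=\Big\langle A\!\int m\,d\pi,\int m\,d\pi\Big\rangle+\int b\cdot m\,d\pi,
\]
where $A$ is a symmetric matrix with constant coefficients and the vector $b$ depends on the marginal moments of $\mu,\nu$. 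Diagonalizing $A=\sum_i\lambda_i e_ie_i^\top$ and splitting by sign, this becomes
\[
\QGW_{r,k}(\pi)=\sum_{i=1}^{\ell}\Big(\int P_i\,d\pi\Big)^2-\sum_{j=1}^{J-\ell}\Big(\int P_{\ell+j}\,d\pi\Big)^2+\int b\cdot m\,d\pi,
\]
with $P_i=\tfrac{1}{2}\sqrt{|\lambda_i|}\,e_i\cdot m$ chosen to match the factor $4$. The linear term is absorbed into the cost: put it in with a fixed coefficient, or equivalently shift one parameter coordinate, noting that a constant plus a linear functional can be added inside $\OT$.

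\emph{Linearizing the squares.} Use the identities $a^2=\sup_u\{2au-u^2\}$ and $-a^2=\inf_v\{-2av+v^2\}$, rescaled so that $4(-\|u\|^2+\|v\|^2+\ldots)$ appears. Then
\[
\QGW_{r,k}(\pi)=4\inf_v\sup_u\Big\{-\|u\|^2+\|v\|^2+\int c_{u,v}\,d\pi\Big\},
\]
where the $u$-part is exactly separable and concave and the $v$-part is separable and convex. Since $|\int P\,d\pi|\le C\,(\text{moments})$ uniformly over $\Pi(\mu,\nu)$, the optimizers $u^*=\int P\,d\pi$ (suitably scaled) lie in bounded boxes, so the parameter sets can be restricted to compact $\Theta_\pm$ whose size depends on the $4kr$-th moments.

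\emph{Exchanging inf over $\pi$ with inf over $v$ and sup over $u$.} Taking $\inf_\pi$ commutes with $\inf_v$ trivially. The main obstacle is exchanging $\inf_\pi$ with $\sup_u$. The objective is concave in $u$ on the compact convex set $\Theta_+$ and linear (hence convex) in $\pi$ on the convex set $\Pi(\mu,\nu)$, which is weakly compact by tightness. Moreover, $\pi\mapsto\int c_{u,v}\,d\pi$ is continuous under weak convergence, because uniform $4kr$-moment bounds give uniform integrability of polynomials of degree at most $4kr$. Sion's minimax theorem therefore gives
\[
\inf_\pi\sup_u=\sup_u\inf_\pi,
\]
and the inner $\inf_\pi$ is exactly $\OT_{c_{u,v}}(\mu,\nu)$. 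This yields both displayed formulas.
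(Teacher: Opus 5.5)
Your outline follows the paper's route: expand the kernel, write $\QGW_{r,k}$ as a quadratic form in finitely many moments, diagonalize, linearize each square over bounded boxes, and swap $\inf_\pi$ with $\sup_u$ by Sion. The one step that does not deliver the stated form as written is your treatment of the affine part.

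Write $q:=b\cdot m$. Putting $\int q\,d\pi$ into the cost ``with a fixed coefficient'' gives the cost $c_{u,v}+\tfrac14 q$. That is not of the required form $\sum_i u_iP_i-\sum_j v_jP_{\ell+j}$. The alternatives you mention do not close this either:
\begin{itemize}
\item shifting a parameter coordinate only reparametrizes the same supremum and replaces $-\|u\|^2$ by $-\|u-t\|^2$;
\item completing the square works only if $b$ lies in the range of $A$, which need not hold, since the linear terms here involve monomials such as $x_i^2y_j^2$ that $A$ need not reach.
\end{itemize}

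The device you are missing is homogenization, which is the role of the index set $\mathfrak J$ in the paper. Include the constant monomial $1$ (index $(0,0)$) and the marginal monomials $x^\beta$, $y^\delta$ as coordinates of the moment vector, rather than treating $M^X_\beta(\mu)$ and $M^Y_\delta(\nu)$ as coefficients. Then every term of $\QGW_{r,k}$, including $M_{\alpha,\gamma}(\pi)\cdot 1$ and $M_{\alpha,\gamma}(\pi)M^X_\beta(\mu)$, is bilinear in the single vector $(\int h_j\,d\pi)_{j\in\mathfrak J}$ with a constant symmetric matrix. Diagonalizing it gives an exact difference of squares with no leftover linear term. Within your own setup, two patches also work:
\begin{itemize}
\item the identity $\int q\,d\pi=\bigl(\int(q/4+1)\,d\pi\bigr)^2-\bigl(\int(q/4-1)\,d\pi\bigr)^2$;
\item a degenerate parameter factor $u_0\in\{1\}$ with $P_0=q/4+1$, which meets the letter of the statement.
\end{itemize}

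Homogenizing has a second payoff: the $P_i$ then depend only on $(r,k,d_x,d_y)$. Your $b$, and hence your cost family, depends on the marginal moments of $(\mu,\nu)$. That is harmless for the theorem in isolation. However, Corollary~\ref{cor:uniform-boxes} and Remark~\ref{rem:bridge} need one fixed family $\{c_\theta\}$ to serve both the population pair and the empirical pair, and your version does not provide that.

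Two smaller points. First, with $P_i=\tfrac12\sqrt{|\lambda_i|}\,e_i\cdot m$ your displayed identity for $\QGW_{r,k}$ is off by a factor $4$. Since $\sup_u\{-u^2+ua\}=a^2/4$, the normalization that produces the prefactor $4$ in the final formula is $P_i=\sqrt{|\lambda_i|}\,e_i\cdot m$, as in the paper. Second, uniform integrability of degree-$4kr$ polynomials over $\Pi(\mu,\nu)$ does not follow from bounded $4kr$-th moments alone. It holds because every $\pi\in\Pi(\mu,\nu)$ has the same marginals, so $\int_{\|x\|>R}\|x\|^{4kr}\,d\pi=\int_{\|x\|>R}\|x\|^{4kr}\,d\mu$ for all $\pi$; this is exactly the paper's cutoff argument. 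Applying Sion directly on $\Pi(\mu,\nu)$, rather than on the finite-dimensional moment image $\mathscr M\subset\R^J$ as the paper does, is fine.
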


\begin{proof}
The full proof is given in Appendix~\ref{app:duality}.
\end{proof}

\begin{corollary}[Uniform compact sets under bounded support]\label{cor:uniform-boxes}
Assume in addition that
\[
\supp(\mu)\subset B_{d_x}(0,\rho_x),
\qquad
\supp(\nu)\subset B_{d_y}(0,\rho_y).
\]
Then the compact sets $\Theta_+$ and $\Theta_-$ in Theorem~\ref{thm:duality-main} may be chosen so that
they depend only on $r,k,d_x,d_y,\rho_x,\rho_y$, and not on the particular pair $(\mu,\nu)$.
\end{corollary}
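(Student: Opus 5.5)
The plan is to open the proof of Theorem~\ref{thm:duality-main} (Appendix~\ref{app:duality}) and track where $\Theta_\pm$ come from. In the polynomial duality mechanism of \cite{ZhangEtAl}, the coupling-dependent part $\QGW_{r,k}(\pi)$ is a quadratic form in the finite vector of mixed moments $m(\pi):=(M_{\alpha,\gamma}(\pi))_{\alpha\neq0,\gamma\neq0}$, plus terms that are linear in $m(\pi)$ with coefficients given by marginal moments. After diagonalizing the symmetric coefficient matrix as $\sum_i \lambda_i w_i^2-\sum_j \lambda'_j z_j^2$, with $w_i,z_j$ linear in $m(\pi)$ and incorporating the linear terms, each square is linearized by the elementary identities
\[
a^2=\sup_{u\in\R}\{2ua-u^2\},\qquad -b^2=\inf_{v\in\R}\{-2vb+v^2\}.
\]
Both extrema are attained at $u=a$ and $v=b$. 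After rescaling, the factor $4$ comes out and the linear functionals become $\int P_i\,d\pi$ for polynomials $P_i$ of degree $\le 4kr$. The sets $\Theta_\pm$ therefore only need to contain the optimizers $u=a(\pi)$ and $v=b(\pi)$ for every admissible $\pi$. In that case, restricting $u,v$ from $\R$ to $\Theta_\pm$ does not change the value of the inner sup or inf, and the minimax exchange with $\inf_\pi$ in the appendix proceeds as before.

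Under the support assumption, every $\pi\in\Pi(\mu,\nu)$ is concentrated on $B_{d_x}(0,\rho_x)\times B_{d_y}(0,\rho_y)$. Hence each mixed moment satisfies $|M_{\alpha,\gamma}(\pi)|\le \rho_x^{|\alpha|}\rho_y^{|\gamma|}$, and each marginal moment obeys the same bound. The polynomials $P_i$ and the linear maps $m\mapsto w_i,z_j$ come from the multinomial expansion and the diagonalization. They depend only on $r,k,d_x,d_y$, and on marginal moment coefficients when those enter, which are again bounded by powers of $\rho_x,\rho_y$. Consequently
\[
|a_i(\pi)|,\ |b_j(\pi)|\le \sup_{(x,y)\in B_{d_x}(0,\rho_x)\times B_{d_y}(0,\rho_y)}|P_i(x,y)|=:A_i(r,k,d_x,d_y,\rho_x,\rho_y),
\]
and we set $\Theta_+:=\prod_{i\le\ell}[-A_i,A_i]$ and $\Theta_-:=\prod_{j}[-A_{\ell+j},A_{\ell+j}]$. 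These are compact boxes that do not depend on $(\mu,\nu)$.

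The main obstacle is the step where the appendix exchanges $\inf_\pi$ with $\sup_u$ (or $\inf_v$). We must check that enlarging or shrinking the parameter sets does not break the equality. For the inner sup over $u$ in the concave direction, any box containing the maximizer gives the same value pointwise in $\pi$. For the outer $\inf_v$, the function $v\mapsto \|v\|^2+\sup_u\{\cdots\}$ is coercive, and its minimizer equals $b(\pi^\star)$ for an optimal $\pi^\star$, so it too lies in the box. A secondary concern is the case where the diagonalization coefficients depend on marginal moments of $(\mu,\nu)$. There, the $P_i$ may vary with the measures while their sup-norm bounds stay uniform; this is enough both for the corollary and for the later covering step.
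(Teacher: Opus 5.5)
Your proposal is correct and follows the paper's proof. Both arguments bound $|\int P_i\,d\pi|$ by $\sup|P_i|$ over $B_{d_x}(0,\rho_x)\times B_{d_y}(0,\rho_y)$, choose boxes that contain the coordinatewise optimizers for every $\pi\in\Pi(\mu,\nu)$ (the paper uses half-widths $C_i^{\mathrm{basis}}/2$), and then rerun the argument of Proposition~\ref{prop:compact-dual}. Your two worries do not arise in the paper's setup. The $P_i$ of Proposition~\ref{prop:poly-decomp} depend only on $r,k,d_x,d_y$, because marginal moments enter as coordinates of the quadratic form rather than as coefficients. The outer $\inf_v$ needs no coercivity argument, since $\inf_m$ and $\inf_v$ commute trivially and the pointwise minimizer $m_{\ell+j}(\pi)/2$ already lies in the box.
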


\begin{proof}
See Appendix~\ref{app:duality}.
\end{proof}

\begin{remark}\label{rem:bridge}
Corollary~\ref{cor:uniform-boxes} is the bridge between generalized duality and the empirical-process
argument. It allows us to keep one compact parameter set $\Theta$ and one family of polynomial costs
$\{c_\theta:\theta\in \Theta\}$ fixed when we compare the centered population pair $(\mu,\nu)$ with the
centered empirical pair $(\widetilde\mu_n,\widetilde\nu_n)$.
\end{remark}

\begin{lemma}[Translation invariance and homogeneity]\label{lem:translation-homogeneity}
For $a\in \R^{d_x}$, $b\in \R^{d_y}$, and $\lambda>0$, define
\[
\tau_a(x):=x+a,
\qquad
\sigma_b(y):=y+b,
\qquad
S_\lambda(z):=\lambda z.
\]
Then, for all $\mu\in \cP(\R^{d_x})$ and $\nu\in \cP(\R^{d_y})$,
\[
\DGW_{r,k}\bigl((\tau_a)_\#\mu,(\sigma_b)_\#\nu\bigr)=\DGW_{r,k}(\mu,\nu)
\]
and
\[
\DGW_{r,k}\bigl((S_\lambda)_\#\mu,(S_\lambda)_\#\nu\bigr)
=
\lambda^{4kr}\DGW_{r,k}(\mu,\nu).
\]
The same identities hold for empirical measures obtained from translated or rescaled samples.
\end{lemma}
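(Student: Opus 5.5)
The plan is to argue directly at the level of couplings, using the pushforward of couplings under product maps. Fix $a,b,\lambda$ and set $T:=\tau_a\times\sigma_b$, i.e.\ $T(x,y)=(x+a,y+b)$. The first step is to check that $\pi\mapsto T_\#\pi$ is a bijection from $\Pi(\mu,\nu)$ onto $\Pi((\tau_a)_\#\mu,(\sigma_b)_\#\nu)$. The marginals of $T_\#\pi$ are $(\tau_a)_\#\mu$ and $(\sigma_b)_\#\nu$, because projection commutes with the product map. The inverse is pushforward by $\tau_{-a}\times\sigma_{-b}$.

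Next, write the GW objective as
\[
J(\pi):=\iint\bigl(\|x-x'\|^{2k}-\|y-y'\|^{2k}\bigr)^{2r}\,d\pi(x,y)\,d\pi(x',y').
\]
By the change of variables formula applied to $\pi\otimes\pi$ and $(T\times T)_\#(\pi\otimes\pi)=T_\#\pi\otimes T_\#\pi$, we get $J(T_\#\pi)=J(\pi)$. The reason is that the integrand depends only on the differences $x-x'$ and $y-y'$, which $T\times T$ leaves unchanged. Taking the infimum over the bijectively corresponding coupling sets then gives the translation identity.

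For homogeneity, use the same construction with $S:=S_\lambda\times S_\lambda$. The integrand evaluated at $(\lambda x,\lambda y,\lambda x',\lambda y')$ equals
\[
\bigl(\lambda^{2k}(\|x-x'\|^{2k}-\|y-y'\|^{2k})\bigr)^{2r}=\lambda^{4kr}\bigl(\|x-x'\|^{2k}-\|y-y'\|^{2k}\bigr)^{2r}.
\]
Hence $J(S_\#\pi)=\lambda^{4kr}J(\pi)$, and taking infima gives the claim since $\lambda^{4kr}>0$. Integrability is no issue for compactly supported measures. More generally, both sides may equal $+\infty$ together, since the bijection preserves finiteness.

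For the empirical statement, note that $(\tau_a)_\#\widehat\mu_n=\frac1n\sum_i\delta_{X_i+a}$, and similarly for $\sigma_b$ and $S_\lambda$. So the empirical measures built from the translated or rescaled samples are exactly the pushforwards of the original empirical measures. The identities above apply verbatim, because they hold for arbitrary probability measures.

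I do not expect a real obstacle. The only point needing care is the identification $(T\times T)_\#(\pi\otimes\pi)=(T_\#\pi)\otimes(T_\#\pi)$, which can be verified on product rectangles and extended by the $\pi$–$\lambda$ theorem.
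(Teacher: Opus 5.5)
Your proposal is correct and follows essentially the same route as the paper: push couplings forward under the product translation and dilation maps, note that the kernel is invariant under translation and scales by $\lambda^{4kr}$ under dilation, then take infima. Your explicit check that $\pi\mapsto T_\#\pi$ is a bijection between the coupling sets, via the inverse maps, makes precise the equality of infima that the paper leaves implicit.
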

\begin{proof}
If $\pi\in \Pi(\mu,\nu)$, then $((\tau_a,\sigma_b))_\#\pi\in \Pi((\tau_a)_\#\mu,(\sigma_b)_\#\nu)$ and
\[
\|\tau_a(x)-\tau_a(x')\|=\|x-x'\|,
\qquad
\|\sigma_b(y)-\sigma_b(y')\|=\|y-y'\|.
\]
Therefore, the GW kernel is unchanged by these translations, and taking infima over couplings gives the
translation invariance.

\medskip

\noindent Similarly, $(S_\lambda,S_\lambda)_\#\pi\in \Pi((S_\lambda)_\#\mu,(S_\lambda)_\#\nu)$ and
\[
\|S_\lambda(x)-S_\lambda(x')\|^{2k}
=
\lambda^{2k}\|x-x'\|^{2k},
\qquad
\|S_\lambda(y)-S_\lambda(y')\|^{2k}
=
\lambda^{2k}\|y-y'\|^{2k}.
\]
Hence
\[
\bigl(
\|S_\lambda(x)-S_\lambda(x')\|^{2k}
-
\|S_\lambda(y)-S_\lambda(y')\|^{2k}
\bigr)^{2r}
=
\lambda^{4kr}
\bigl(
\|x-x'\|^{2k}
-
\|y-y'\|^{2k}
\bigr)^{2r}.
\]
Taking infima over couplings gives the homogeneity formula. The empirical-measure statement follows by
applying the same push-forward identities to the empirical measures of the translated or rescaled samples.
\end{proof}

\begin{remark}\label{rem:normalization}
In the proof of Theorem~\ref{thm:main} we use Lemma~\ref{lem:translation-homogeneity} with the dilation
$x\mapsto x/(2R)$. This geometric normalization is equivalent to renormalizing the polynomial basis
$(P_i)$ and the parameter boxes in Theorem~\ref{thm:duality-main} so that the induced cost family is
controlled on fixed unit-scale balls. We implement the normalization at the level of the measures because it
keeps the later entropy and regularity arguments cleaner.
\end{remark}

\section{Proof of the main theorem}\label{sec:proof-main}

Throughout this section, we assume without loss of generality that $d_x\le d_y$ and set $d_* :=d_x\wedge d_y=d_x$.

\medskip

\noindent The first step is to normalize the geometry. If $R=0$, then both $\mathcal X$ and $\mathcal Y$ are singletons, all empirical
measures are identical to the population measures, and the claim is trivial. We therefore assume $R>0$. Let
\[
m_\mu:=\int x\,d\mu(x),
\qquad
m_\nu:=\int y\,d\nu(y),
\]
and define the normalized measures
\[
\mu^\circ
:=
\left(x\mapsto \frac{x-m_\mu}{2R}\right)_\#\mu,
\qquad
\nu^\circ
:=
\left(y\mapsto \frac{y-m_\nu}{2R}\right)_\#\nu.
\]
If
\[
U_i:=\frac{X_i-m_\mu}{2R},
\qquad
V_j:=\frac{Y_j-m_\nu}{2R},
\]
then $(U_i)$ are i.i.d.\ with law $\mu^\circ$, $(V_j)$ are i.i.d.\ with law $\nu^\circ$, and their empirical
measures are
\[
\widehat\mu_n^\circ = \frac1n\sum_{i=1}^n \delta_{U_i},
\qquad
\widehat\nu_n^\circ = \frac1n\sum_{j=1}^n \delta_{V_j}.
\]
Because $\mathcal X\subset B_{d_x}(0,R)$ and $\mathcal Y\subset B_{d_y}(0,R)$, the centered-and-rescaled measures $\mu^\circ$
and $\nu^\circ$ are centered and satisfy
\[
\supp(\mu^\circ)\subset B_{d_x}(0,1),
\qquad
\supp(\nu^\circ)\subset B_{d_y}(0,1).
\]
By Lemma~\ref{lem:translation-homogeneity},
\[
\DGW_{r,k}(\mu,\nu)
=
(2R)^{4kr}\DGW_{r,k}(\mu^\circ,\nu^\circ),
\]
and likewise
\[
\DGW_{r,k}(\widehat\mu_n,\widehat\nu_n)
=
(2R)^{4kr}\DGW_{r,k}(\widehat\mu_n^\circ,\widehat\nu_n^\circ).
\]
Therefore
\begin{equation}\label{eq:rescaling-main}
\E\Bigl|
\DGW_{r,k}(\mu,\nu)
-
\DGW_{r,k}(\widehat\mu_n,\widehat\nu_n)
\Bigr|
=
(2R)^{4kr}
\E\Bigl|
\DGW_{r,k}(\mu^\circ,\nu^\circ)
-
\DGW_{r,k}(\widehat\mu_n^\circ,\widehat\nu_n^\circ)
\Bigr|.
\end{equation}
It is therefore enough to prove the theorem under the normalized hypotheses
\begin{equation}\label{eq:normalized-hypotheses}
\mu,\nu \text{ are centered},
\qquad
\supp(\mu)\subset B_{d_x}(0,1),
\qquad
\supp(\nu)\subset B_{d_y}(0,1).
\end{equation}
From now on we work under \eqref{eq:normalized-hypotheses} and suppress the superscript ${}^\circ$.

\subsection{Centering and decomposition}

Define the centered empirical measures by
\[
\widetilde\mu_n
=
\frac1n\sum_{i=1}^n \delta_{X_i-\overline X_n},
\qquad
\overline X_n:=\frac1n\sum_{i=1}^n X_i,
\]
and similarly
\[
\widetilde\nu_n
=
\frac1n\sum_{j=1}^n \delta_{Y_j-\overline Y_n},
\qquad
\overline Y_n:=\frac1n\sum_{j=1}^n Y_j.
\]
Since $\supp(\mu)\subset B_{d_x}(0,1)$ and $\supp(\nu)\subset B_{d_y}(0,1)$, we have
\[
\supp(\widetilde\mu_n)\subset B_{d_x}(0,2),
\qquad
\supp(\widetilde\nu_n)\subset B_{d_y}(0,2)
\]
almost surely.

\begin{proposition}\label{prop:centering-decomp}
Under the normalized hypotheses \eqref{eq:normalized-hypotheses},
\[
\begin{aligned}
\Bigl|
\DGW_{r,k}(\mu,\nu)
-
\DGW_{r,k}(\widehat\mu_n,\widehat\nu_n)
\Bigr|
&\le
\Bigl|
\MGW_{r,k}(\mu,\nu)-\MGW_{r,k}(\widehat\mu_n,\widehat\nu_n)
\Bigr|
\\
&\quad+
\Bigl|
\MGW_{r,k}(\widehat\mu_n,\widehat\nu_n)-\MGW_{r,k}(\widetilde\mu_n,\widetilde\nu_n)
\Bigr|
\\
&\quad+
\Bigl|
\CGW_{r,k}(\mu,\nu)-\CGW_{r,k}(\widetilde\mu_n,\widetilde\nu_n)
\Bigr|.
\end{aligned}
\]
\end{proposition}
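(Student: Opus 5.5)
The plan is to combine the decomposition \eqref{eq:marginal-coupling-decomp} with translation invariance (Lemma~\ref{lem:translation-homogeneity}) and then apply the triangle inequality.

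First, note that $\widetilde\mu_n$ and $\widetilde\nu_n$ are the push-forwards of $\widehat\mu_n$ and $\widehat\nu_n$ under the translations $\tau_{-\overline X_n}$ and $\sigma_{-\overline Y_n}$. These translations are random, but they are fixed once we condition on the sample. Lemma~\ref{lem:translation-homogeneity} therefore gives, pointwise in $\omega$,
\[
\DGW_{r,k}(\widehat\mu_n,\widehat\nu_n)=\DGW_{r,k}(\widetilde\mu_n,\widetilde\nu_n).
\]

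Second, all measures involved are compactly supported, so they lie in $\cP_{4kr}$ and the decomposition \eqref{eq:marginal-coupling-decomp} applies to each pair. Applying it to $(\mu,\nu)$ and to $(\widetilde\mu_n,\widetilde\nu_n)$, we write
\[
\DGW_{r,k}(\mu,\nu)-\DGW_{r,k}(\widehat\mu_n,\widehat\nu_n)
=\DGW_{r,k}(\mu,\nu)-\DGW_{r,k}(\widetilde\mu_n,\widetilde\nu_n)
=\bigl[\MGW_{r,k}(\mu,\nu)-\MGW_{r,k}(\widetilde\mu_n,\widetilde\nu_n)\bigr]+\bigl[\CGW_{r,k}(\mu,\nu)-\CGW_{r,k}(\widetilde\mu_n,\widetilde\nu_n)\bigr].
\]

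Third, we insert $\pm\MGW_{r,k}(\widehat\mu_n,\widehat\nu_n)$ into the first bracket. The triangle inequality then yields the three terms of the claim.

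The only point needing care is the validity of \eqref{eq:marginal-coupling-decomp} itself, which we check for each pair. By construction, $\QGW_{r,k}(\pi)$ equals the full GW integral minus $\MGW_{r,k}$. The marginal-only terms do not depend on $\pi$ once the marginals are fixed, so taking the infimum over $\Pi(\mu,\nu)$ commutes with subtracting the constant $\MGW_{r,k}$. All moments are finite because the supports are bounded, so no $\infty-\infty$ issues arise.

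Beyond that, I expect no real obstacle. The statement is essentially bookkeeping that isolates the centering error in the marginal part. The coupling part is compared directly between the centered population and centered empirical measures, which is what Corollary~\ref{cor:uniform-boxes} requires, since both pairs are supported in $B(0,2)$.
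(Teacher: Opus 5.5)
Your proposal is correct and follows the paper's own proof: translation invariance gives $\DGW_{r,k}(\widehat\mu_n,\widehat\nu_n)=\DGW_{r,k}(\widetilde\mu_n,\widetilde\nu_n)$, then \eqref{eq:marginal-coupling-decomp} is applied to both pairs, and adding and subtracting $\MGW_{r,k}(\widehat\mu_n,\widehat\nu_n)$ with the triangle inequality finishes. Your extra check that the decomposition is well defined (finite moments, and marginal-only terms constant on $\Pi(\mu,\nu)$) is a harmless detail the paper leaves implicit.
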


\begin{proof}
Translation invariance from Lemma~\ref{lem:translation-homogeneity} gives
\[
\DGW_{r,k}(\widehat\mu_n,\widehat\nu_n)
=
\DGW_{r,k}(\widetilde\mu_n,\widetilde\nu_n).
\]
Applying the decomposition \eqref{eq:marginal-coupling-decomp} to $(\mu,\nu)$ and $(\widetilde\mu_n,\widetilde\nu_n)$, and
then adding and subtracting $\MGW_{r,k}(\widehat\mu_n,\widehat\nu_n)$, gives the claim.
\end{proof}

\subsection{Marginal term}

\begin{proposition}\label{prop:marginal-structure}
For every $r,k\ge 1$, $\MGW_{r,k}(\mu,\nu)$ is a polynomial in marginal moments of $\mu$ and $\nu$ of order
at most $4kr$. Each monomial is a product of at most two moments, and all coefficients depend only on
$r,k,d_x,d_y$.
\end{proposition}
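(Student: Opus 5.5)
The plan is to expand the kernel completely and read off the structure of the marginal-only terms directly from Definition~\ref{def:marginal-only}. First write
\[
K_X(x,x')^s=\Bigl(\sum_{i=1}^{d_x}(x_i-x'_i)^2\Bigr)^{ks},
\]
and expand it by the multinomial and binomial theorems into a finite sum $\sum c_{\alpha,\beta}\,x^{\alpha}x'^{\beta}$ with $|\alpha|+|\beta|=2ks$. The coefficients are integers depending only on $k,s,d_x$. Expand $K_Y(y,y')^{2r-s}=\sum c'_{\gamma,\delta}\,y^{\gamma}y'^{\delta}$ in the same way, with $|\gamma|+|\delta|=2k(2r-s)$. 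Inserting these into the binomial expansion of $(K_X-K_Y)^{2r}$ and integrating against $\pi\otimes\pi$, Fubini (justified since the supports are bounded) turns each term $x^\alpha x'^\beta y^\gamma y'^\delta$ into $M_{\alpha,\gamma}(\pi)M_{\beta,\delta}(\pi)$. Hence
\[
\DGW\text{-integrand integral}=\sum_{s}\sum_{\alpha,\beta,\gamma,\delta}(-1)^s\tbinom{2r}{s}c_{\alpha,\beta}c'_{\gamma,\delta}\,M_{\alpha,\gamma}(\pi)M_{\beta,\delta}(\pi),
\]
a finite sum whose index set and coefficients depend only on $r,k,d_x,d_y$.

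Next, identify the marginal-only terms. If $\gamma=0$, then $M_{\alpha,0}(\pi)=\int x^\alpha\,d\mu=M^X_\alpha(\mu)$. If $\alpha=0$, then $M_{0,\gamma}(\pi)=M^Y_\gamma(\nu)$; in the case $\alpha=\gamma=0$ this factor equals $1$. So each marginal-only term is a constant coefficient times a product of at most two marginal moments, and factors equal to $1$ reduce the count. This gives the claim that each monomial is a product of at most two moments, and it shows that the term does not depend on $\pi$. Summing these terms therefore defines $\MGW_{r,k}(\mu,\nu)$ as a polynomial in marginal moments, consistent with Definition~\ref{def:marginal-only}.

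For the order bound, each factor's moment has total degree $|\alpha|$ (or $|\gamma|$), and $|\alpha|\le 2ks\le 4kr$ and $|\gamma|\le 2k(2r-s)\le 4kr$. So every moment appearing has order at most $4kr$. Together, the two factors of a monomial carry total degree $|\alpha|+|\beta|+|\gamma|+|\delta|=4kr$.

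The only delicate point is bookkeeping. A single monomial $M_{\alpha,\gamma}M_{\beta,\delta}$ may arise from several index tuples, and the split between marginal-only and coupling terms must be made at the level of index tuples, as in the definition, rather than after collecting like terms. I would handle this by defining $\MGW_{r,k}$ as the sum over the explicit finite index set $\{(s,\alpha,\beta,\gamma,\delta):(\alpha=0\vee\gamma=0)\wedge(\beta=0\vee\delta=0)\}$. Under that definition the coefficient of each resulting moment monomial is a finite sum of the universal constants above, so it depends only on $r,k,d_x,d_y$.
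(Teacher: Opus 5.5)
Your proposal is correct and follows essentially the same route as the paper. Both expand $(K_X-K_Y)^{2r}$ by the binomial and multinomial theorems and write each term as a product $M_{\alpha,\gamma}(\pi)M_{\beta,\delta}(\pi)$. Both then use the index condition of Definition~\ref{def:marginal-only} to see that each surviving factor is a marginal moment (or $1$), and read off the order bounds $|\alpha|\le 2ks\le 4kr$ and $|\gamma|\le 2k(2r-s)\le 4kr$. Your explicit Fubini justification and your insistence on splitting at the level of index tuples rather than collected monomials are slightly more careful than the paper's write-up, but they do not change the argument.
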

\begin{proof}
See Appendix~\ref{app:marginal}.
\end{proof}

\bigskip

\begin{proposition}\label{prop:marginal-pop-vs-emp}
Under the normalized hypotheses \eqref{eq:normalized-hypotheses},
\[
\E\Bigl|
\MGW_{r,k}(\mu,\nu)-\MGW_{r,k}(\widehat\mu_n,\widehat\nu_n)
\Bigr|
\lesssim_{r,k,d_x,d_y} n^{-1/2}
\]
\end{proposition}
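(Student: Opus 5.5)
By Proposition~\ref{prop:marginal-structure}, $\MGW_{r,k}(\mu,\nu)$ is a finite sum of monomials of the form $c\,A\,B$, where each of $A$ and $B$ is either the constant $1$ or a marginal moment $M^X_\alpha(\mu)$ or $M^Y_\gamma(\nu)$ with $|\alpha|,|\gamma|\le 4kr$. The number of monomials and the coefficients $c$ depend only on $r,k,d_x,d_y$. The same polynomial evaluated at $(\widehat\mu_n,\widehat\nu_n)$ gives $\MGW_{r,k}(\widehat\mu_n,\widehat\nu_n)$. By the triangle inequality it therefore suffices to bound $\E|AB-\widehat A\widehat B|\lesssim n^{-1/2}$ for each monomial, where hats denote the empirical moments.

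Each monomial is handled with the elementary identity
\[
AB-\widehat A\widehat B=(A-\widehat A)B+\widehat A(B-\widehat B).
\]
Under \eqref{eq:normalized-hypotheses}, $\supp(\mu)\subset B_{d_x}(0,1)$, so $|x^\alpha|\le\|x\|^{|\alpha|}\le 1$ on the support. Hence every population moment and every empirical moment (almost surely) has absolute value at most $1$, and the same holds for $\nu$. This gives
\[
\E|AB-\widehat A\widehat B|\le \E|A-\widehat A|+\E|B-\widehat B|.
\]

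It then remains to bound a single moment deviation. We have
\[
\widehat M^X_\alpha-M^X_\alpha=\frac1n\sum_{i=1}^n\bigl(X_i^\alpha-\E X_1^\alpha\bigr),
\]
which is an average of i.i.d.\ centered variables bounded by $2$. By Jensen's inequality,
\[
\E|\widehat M^X_\alpha-M^X_\alpha|\le \bigl(\Var(X_1^\alpha)/n\bigr)^{1/2}\le n^{-1/2},
\]
and likewise for the $Y$-moments. Summing over the finitely many monomials, with constants depending only on $r,k,d_x,d_y$, yields the claim.

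There is no real obstacle here. The only point needing care is that the expansion in Appendix~\ref{app:marginal} really produces at most two moment factors per term, each of order at most $4kr$. This is exactly what Proposition~\ref{prop:marginal-structure} asserts, and it guarantees that the uniform bound $|\text{moment}|\le1$ suffices to control the cross terms.
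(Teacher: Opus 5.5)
Your proposal is correct and follows the paper's proof essentially step for step. You reduce via Proposition~\ref{prop:marginal-structure} to finitely many monomials with at most two moment factors, split each product as $(A-\widehat A)B+\widehat A(B-\widehat B)$ using the bound $|\text{moment}|\le 1$ from the unit-ball support, and bound each single-moment deviation by $(\Var(X_1^\alpha)/n)^{1/2}\le n^{-1/2}$. Treating every monomial type uniformly, with the constant $1$ as a trivial factor, is only a cosmetic streamlining of the paper's three-case presentation.
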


\begin{proof}
By Proposition~\ref{prop:marginal-structure}, $\MGW_{r,k}(\mu,\nu)$ is a finite linear combination of monomials of
the form
\[
M^X_\alpha(\mu),
\qquad
M^X_\alpha(\mu)M^X_\beta(\mu),
\qquad
M^X_\alpha(\mu)M^Y_\gamma(\nu),
\]
and the analogous expressions involving only moments of $\nu$, where $\alpha, \beta, \gamma$ are multi-indices of the appropriate dimensions. Thus, it suffices to estimate the empirical error for one such monomial.

\medskip

\noindent \textbf{Step 1.} For every multi-index $\alpha$ with $|\alpha|\le 4kr$, the support assumption $\supp (\mu) \subset B_{d_x}(0,1)$ implies
\[
|x^\alpha| = \prod_{i=1}^{d_x} |x_i|^{\alpha_i}
\le
\|x\|^{|\alpha|}
\le
1
\qquad
\text{for all }x\in \supp(\mu).
\]
In particular, this implies that $\E |Z^\alpha|^2 \leq 1$ for any random variable $Z \sim \mu$ and any multi-index $\alpha$. Set
\[
Y := M^X_\alpha(\widehat\mu_n)-M^X_\alpha(\mu)
=
\frac1n\sum_{i=1}^n
\bigl(
X_i^\alpha-\E[X_1^\alpha]
\bigr),
\]
Note that \(\mathbb E Y=0\) and \(\Var(Y) = \Var(M^X_\alpha(\widehat\mu_n))\). Using independence, 
\[
\Var\bigl(M^X_\alpha(\widehat\mu_n)\bigr)
=
\frac1n\Var(X_1^\alpha)
\le
\frac1n\E|X_1^\alpha|^2
\le
\frac1n.
\]
Hence, by Cauchy--Schwarz,
\[
\mathbb E|Y|
\le (\mathbb E Y^2)^{1/2}
= \sqrt{\operatorname{Var}(Y)}
= \sqrt{\operatorname{Var}(M^X_\alpha(\widehat\mu_n))}
\le \frac1{\sqrt n}.
\]
Thus, we conclude that
\[
\E\bigl|M^X_\alpha(\widehat\mu_n)-M^X_\alpha(\mu)\bigr|
\le
\sqrt{\Var(M^X_\alpha(\widehat\mu_n))}
\le
\frac1{\sqrt n}.
\]
The same estimate holds for moments of $\nu$:
\[
\E\bigl|M^Y_\alpha(\widehat\nu_n)-M^Y_\alpha(\nu)\bigr|
\le
\sqrt{\Var(M^Y_\alpha(\widehat\nu_n))}
\le
\frac1{\sqrt n}.
\]

\medskip

\noindent \textbf{Step 2.} We now estimate the mixed case $U^X_{\alpha,\beta}(\mu)=M^X_\alpha(\mu)M^X_\beta(\mu)$ as follows. Observe that:
\[
\bigl|
M^X_\alpha(\mu)M^X_\beta(\mu)-M^X_\alpha(\widehat\mu_n)M^X_\beta(\widehat\mu_n)
\bigr|
\le
|M^X_\alpha(\mu)-M^X_\alpha(\widehat\mu_n)|\,|M^X_\beta(\mu)|
+
|M^X_\alpha(\widehat\mu_n)|\,|M^X_\beta(\mu)-M^X_\beta(\widehat\mu_n)|.
\]
Since
\[
|M^X_\beta(\mu)|\le 1,
\qquad
|M^X_\alpha(\widehat\mu_n)|\le \frac1n\sum_{i=1}^n |X_i^\alpha|\le 1,
\]
taking expectations gives
\[
\E\bigl|U^X_{\alpha,\beta}(\mu)-U^X_{\alpha,\beta}(\widehat\mu_n)\bigr|
\le
\frac{2}{\sqrt n}.
\]

\medskip

\noindent \textbf{Step 3.} For a mixed monomial \(U_{\alpha, \gamma}(\mu,\nu)=M^X_\alpha(\mu)M^Y_\gamma(\nu)\), we similarly have
\[
|U_{\alpha, \gamma}(\mu,\nu)-U_{\alpha, \gamma}(\widehat\mu_n,\widehat\nu_n)|
\le
|M^X_\alpha(\mu)-M^X_\alpha(\widehat\mu_n)|\,|M^Y_\gamma(\nu)|
+
|M^X_\alpha(\widehat\mu_n)|\,|M^Y_\gamma(\nu)-M^Y_\gamma(\widehat\nu_n)|.
\]
Using \(|M^Y_\gamma(\nu)|\le 1\), \(|M^X_\alpha(\widehat\mu_n)|\le 1\), and the one-moment bounds above gives
\[
\mathbb E|U_{\alpha, \gamma}(\mu,\nu)-U_{\alpha, \gamma}(\widehat\mu_n,\widehat\nu_n)|\le \frac{2}{\sqrt n}.
\]
Therefore, summing over the finitely many
monomials in $\MGW_{r,k}$, we obtain
\[
\E\Bigl|
\MGW_{r,k}(\mu,\nu)-\MGW_{r,k}(\widehat\mu_n,\widehat\nu_n)
\Bigr|
\lesssim_{r,k,d_x,d_y} n^{-1/2}
\]
\end{proof}

\begin{proposition}\label{prop:marginal-emp-vs-centered}
Under the normalized hypotheses \eqref{eq:normalized-hypotheses},
\[
\E\Bigl|
\MGW_{r,k}(\widehat\mu_n,\widehat\nu_n)-\MGW_{r,k}(\widetilde\mu_n,\widetilde\nu_n)
\Bigr| \lesssim_{r,k,d_x,d_y} n^{-1/2}
\]
\end{proposition}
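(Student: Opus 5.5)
By Proposition~\ref{prop:marginal-structure}, $\MGW_{r,k}$ is a fixed finite linear combination, with coefficients depending only on $r,k,d_x,d_y$, of moments and products of two moments of order at most $4kr$. As in Proposition~\ref{prop:marginal-pop-vs-emp}, it therefore suffices to bound, for each multi-index $\alpha$ with $|\alpha|\le 4kr$,
\[
\E\bigl|M^X_\alpha(\widehat\mu_n)-M^X_\alpha(\widetilde\mu_n)\bigr| \lesssim n^{-1/2},
\]
and the analogous quantity for $\nu$. Products of two moments are then handled by the same add-and-subtract step as in Steps 2--3 of that proof. For this we need that all moments involved are uniformly bounded. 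This holds because $\supp(\widehat\mu_n)\subset B_{d_x}(0,1)$ and $\supp(\widetilde\mu_n)\subset B_{d_x}(0,2)$ almost surely, so every such moment is at most $2^{4kr}$ in absolute value.

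For a single moment, I would use that $M^X_\alpha(\widetilde\mu_n)=\frac1n\sum_i (X_i-\overline X_n)^\alpha$. The map $z\mapsto z^\alpha$ is a polynomial of degree at most $4kr$. On the ball $B_{d_x}(0,2)$ it is Lipschitz with constant $L_\alpha\lesssim_{r,k,d_x} 1$, since its gradient is bounded there by a constant depending only on $|\alpha|$ and $d_x$. Both $X_i$ and $X_i-\overline X_n$ lie in $B_{d_x}(0,2)$, so
\[
\bigl|M^X_\alpha(\widehat\mu_n)-M^X_\alpha(\widetilde\mu_n)\bigr|
\le \frac1n\sum_{i=1}^n \bigl|X_i^\alpha-(X_i-\overline X_n)^\alpha\bigr|
\le L_\alpha\,\|\overline X_n\|.
\]
Alternatively, one could expand $(X_i-\overline X_n)^\alpha$ binomially. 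Every term other than $X_i^\alpha$ contains at least one factor of a coordinate of $\overline X_n$, and all other factors are bounded by $2^{4kr}$, which gives the same bound.

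It remains to bound $\E\|\overline X_n\|$. Because $\mu$ is centered under \eqref{eq:normalized-hypotheses}, Cauchy--Schwarz and independence give
\[
\E\|\overline X_n\|\le \bigl(\E\|\overline X_n\|^2\bigr)^{1/2}=\Bigl(\tfrac1n\E\|X_1\|^2\Bigr)^{1/2}\le n^{-1/2}.
\]
The same argument applies to $\overline Y_n$.

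Combining these bounds, each monomial difference has expectation $\lesssim_{r,k,d_x,d_y} n^{-1/2}$, and summing over the finitely many monomials proves the claim. The only step needing care is making the Lipschitz constant of $z\mapsto z^\alpha$ on the radius-$2$ ball uniform. This is routine: $|\partial_j z^\alpha|\le |\alpha|\,2^{|\alpha|-1}$ there, so $L_\alpha\le \sqrt{d_x}\,|\alpha|\,2^{|\alpha|-1}$. I do not expect a genuine obstacle. The key structural point is that the centering vector is itself an empirical mean of a centered bounded variable, which is why the error is parametric.
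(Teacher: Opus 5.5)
Your proposal is correct and follows essentially the same route as the paper. Both arguments reduce via Proposition~\ref{prop:marginal-structure} to single moments and products of two moments, bound each single-moment centering error by a constant times $\|\overline X_n\|$, handle products by add-and-subtract with moments bounded by $2^{4kr}$, and conclude with $\E\|\overline X_n\|\le n^{-1/2}$ from centering. The only difference is that the paper bounds the single-moment error by the binomial expansion you list as your alternative, obtaining the constant $2^{|\alpha|}-1$, whereas your mean-value bound on the convex ball $B_{d_x}(0,2)$ works equally well.
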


\begin{proof}
The proof is similar to the above. See Appendix~\ref{app:marginal}.
\end{proof}

\bigskip

\noindent Combining Propositions~\ref{prop:marginal-pop-vs-emp} and \ref{prop:marginal-emp-vs-centered} gives
\begin{equation}\label{eq:marginal-final}
\begin{aligned}
\E\Bigl[
\Bigl|
\MGW_{r,k}(\mu,\nu)-\MGW_{r,k}(\widehat\mu_n,\widehat\nu_n)
\Bigr|
&+
\Bigl|
\MGW_{r,k}(\widehat\mu_n,\widehat\nu_n)-\MGW_{r,k}(\widetilde\mu_n,\widetilde\nu_n)
\Bigr|
\Bigr]
 \lesssim_{r,k,d_x,d_y} n^{-1/2}
\end{aligned}
\end{equation}

\bigskip

\subsection{Reduction of the transport term to a fixed cost family}

Set
\[
B_x:=B_{d_x}(0,2),
\qquad
B_y:=B_{d_y}(0,2).
\]
By Corollary~\ref{cor:uniform-boxes}, applied with $(\rho_x,\rho_y)=(2,2)$, there exist compact sets
\[
\Theta_+,
\qquad
\Theta_-,
\]
depending only on $r,k,d_x,d_y$, such that the generalized duality representation from
Theorem~\ref{thm:duality-main} holds for the centered population pair $(\mu,\nu)$ and the
centered empirical pair $(\widetilde\mu_n,\widetilde\nu_n)$. We therefore set
\[
\Theta:=\Theta_+\times \Theta_-,
\qquad
c_\theta:=c_{u,v}\quad\text{for }\theta=(u,v)\in \Theta,
\]
and keep this compact parameter set and cost family fixed throughout the rest of the proof.

The transport term has therefore been reduced to a uniform empirical-process problem for the fixed family
\[
\{\OT_{c_\theta}:\theta\in \Theta\}.
\]
From this point onward, the strategy is to isolate two ambient classes that contain all optimal first and second
dual potentials, and then to estimate the entropy of those classes separately. The first class is controlled
by semiconcavity. The second class is obtained from the first one by taking $c_\theta$-transforms and
varying $\theta$ over the compact set $\Theta$.

\subsection{Regularity of the dual potential classes}

The next lemmas form the regularity package for optimal transport under smooth or semiconcave costs;
compare \cite{ZhangEtAl,HundrieserStaudtMunk,ManoleNilesWeed}. We first record the regularity properties
of the polynomial cost family $\{c_\theta\}_{\theta\in\Theta}$.

\begin{lemma}\label{lem:cost-semiconcavity}
There exists a constant
\[
\Lambda=\Lambda(r,k,d_x,d_y)<\infty
\]
such that, for every $\theta\in \Theta$ and every $y\in B_y$, the map
\[
x\longmapsto c_\theta(x,y)
\]
is $\Lambda$-semiconcave on $B_x$. Likewise, for every $\theta\in \Theta$ and every $x\in B_x$, the map
\[
y\longmapsto c_\theta(x,y)
\]
is $\Lambda$-semiconcave on $B_y$.
\end{lemma}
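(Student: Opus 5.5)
The plan is to reduce semiconcavity to a uniform bound on the Hessian. A $C^2$ function $f$ on a convex set $D$ is $\Lambda$-semiconcave as soon as $\nabla^2 f(x)\preceq \Lambda I$ for every $x\in D$. Indeed, then $\nabla^2\bigl(f-\tfrac{\Lambda}{2}\|\cdot\|^2\bigr)\preceq 0$ on $D$, and a $C^2$ function with negative semidefinite Hessian on a convex set is concave; one can check this along segments by taking second derivatives of $t\mapsto f(x+t(x'-x))$. So it suffices to produce a constant $\Lambda$, depending only on $r,k,d_x,d_y$, such that
\[
\sup_{\theta\in\Theta}\ \sup_{(x,y)\in B_x\times B_y}\ \bigl\|\nabla_x^2 c_\theta(x,y)\bigr\|_{\mathrm{op}}\le \Lambda,
\]
and the analogous bound for $\nabla_y^2 c_\theta$. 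Since $\nabla^2 f\preceq \|\nabla^2 f\|_{\mathrm{op}}I$, such a bound gives semiconcavity in each variable separately.

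To obtain the bound, I use the explicit form $c_\theta=\sum_{i\le \ell}u_iP_i-\sum_{j\le J-\ell}v_jP_{\ell+j}$ from Theorem~\ref{thm:duality-main}. The polynomials $P_1,\dots,P_J$ are fixed, have degree at most $4kr$, and are determined by $(r,k,d_x,d_y)$ through the expansion of the kernel. This should be checked against the construction in Appendix~\ref{app:duality}: the $P_i$ must not depend on $(\mu,\nu)$, and only the boxes are tied to the support radii. Given that, each second partial derivative $\partial_{x_a}\partial_{x_b}P_i$ is a fixed polynomial, so it is continuous and hence bounded on the compact set $B_x\times B_y$. Define
\[
A:=\max_{i\le J}\ \max_{a,b}\ \sup_{B_x\times B_y}\bigl|\partial_{x_a}\partial_{x_b}P_i\bigr|.
\]
By Corollary~\ref{cor:uniform-boxes} with $(\rho_x,\rho_y)=(2,2)$, the set $\Theta$ is compact and depends only on $r,k,d_x,d_y$. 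Hence $T:=\sup_{\theta\in\Theta}\|\theta\|_1<\infty$. Bounding the operator norm by the Frobenius norm then gives
\[
\|\nabla_x^2c_\theta(x,y)\|_{\mathrm{op}}\le d_x\,A\,T.
\]
The same argument in $y$ gives the bound $d_y A' T$, where $A'$ is the corresponding maximum of second $y$-derivatives. Taking $\Lambda$ to be the maximum of these two bounds completes the argument.

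The only real subtlety is bookkeeping rather than analysis. I need to confirm that the polynomial family and the boxes are genuinely uniform, which is exactly what Corollary~\ref{cor:uniform-boxes} and Remark~\ref{rem:bridge} provide. I also need to note that semiconcavity is required on the ball $B_x$ itself, which is convex, so Definition~\ref{def:semicolon} applies directly with $D=B_x$ and no extension is needed. If one wanted an explicit constant, the Hessian of a monomial $x^\alpha y^\gamma$ on radius-$2$ balls can be bounded by $|\alpha|^2 2^{|\alpha|+|\gamma|}$, but compactness alone suffices for the lemma.
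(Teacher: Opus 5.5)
Your proof is correct and follows the paper's argument: bound the $x$- and $y$-Hessians of the polynomial cost uniformly over the compact set $\Theta\times B_x\times B_y$, then conclude semiconcavity on the convex balls from $\nabla^2 c_\theta\preceq \Lambda I$. Your explicit bound $d_x A T$, obtained from linearity in $\theta$ and the Frobenius-norm estimate, just makes quantitative the paper's continuity-plus-compactness step, and you correctly confirmed that the $P_i$ in Appendix~\ref{app:duality} depend only on $(r,k,d_x,d_y)$.
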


\begin{proof}
Each $c_\theta$ is a polynomial on $\R^{d_x}\times \R^{d_y}$ of degree at most $4kr$, with coefficients
depending continuously on $\theta\in \Theta$. Hence every entry of the Hessians $\nabla_{xx}^2 c_\theta$ and
$\nabla_{yy}^2 c_\theta$ is a polynomial of degree at most $4kr-2$ whose coefficients are uniformly bounded
over $\Theta$. Since $\Theta\times B_x\times B_y$ is compact, there exists $\Lambda<\infty$ such that
\[
\nabla_{xx}^2 c_\theta(x,y) \preceq \Lambda I,
\qquad
\nabla_{yy}^2 c_\theta(x,y) \preceq \Lambda I
\]
for all $(\theta,x,y)\in \Theta\times B_x\times B_y$. Therefore, the lemma holds. 
\end{proof}

\bigskip

\noindent For later use define
\[
L_x
:=
\sup_{\theta\in \Theta}
\sup_{(x,y)\in B_x\times B_y}
\|\nabla_x c_\theta(x,y)\|,
\qquad
L_y
:=
\sup_{\theta\in \Theta}
\sup_{(x,y)\in B_x\times B_y}
\|\nabla_y c_\theta(x,y)\|,
\]
and
\[
L_{\mathrm{pot}}:=L_x\vee L_y,
\qquad
C_{\mathrm{cost}}:=
\sup_{\theta\in \Theta}
\sup_{(x,y)\in B_x\times B_y}
|c_\theta(x,y)|.
\]
These quantities are finite because $\Theta\times B_x\times B_y$ is compact and the cost family is
continuous.

\bigskip

\begin{lemma}\label{lem:dual-pair-bounds}
Fix $\theta\in \Theta$, and let $\mu',\nu'$ be probability measures supported on compact sets
\[
\mathcal{X}'\subset B_x,
\qquad
\mathcal{Y}'\subset B_y.
\]
Then there exists an optimal pair $(\phi,\psi)$ for $\OT_{c_\theta}(\mu',\nu')$ such that
\[
\phi(x)=\inf_{y\in \mathcal{Y}'}\bigl(c_\theta(x,y)-\psi(y)\bigr),
\qquad
\psi(y)=\inf_{x\in \mathcal{X}'}\bigl(c_\theta(x,y)-\phi(x)\bigr).
\]
Moreover, after normalizing by an additive constant, we may arrange
\[
\|\phi\|_{\Lip(\mathcal{X}')}\le L_{\mathrm{pot}},
\qquad
\|\psi\|_{\Lip(\mathcal{Y}')}\le L_{\mathrm{pot}},
\]
and
\[
\|\phi\|_{L^\infty(\mathcal{X}')}\le C_{\varphi},
\qquad
\|\psi\|_{L^\infty(\mathcal{Y}')}\le C_{\varphi},
\]
where
\[
C_{\varphi}:=C_{\mathrm{cost}}+4L_{\mathrm{pot}}.
\]
\end{lemma}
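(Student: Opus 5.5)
Existence of an optimal pair of $c_\theta$-conjugate potentials follows from Kantorovich duality \eqref{eq:ot-dual} on the compact spaces $\mathcal X'\times\mathcal Y'$ with the continuous bounded cost $c_\theta$. The Lipschitz and sup-norm bounds are then read off from the infimum representations. I will first secure existence, then regularity, then normalize.

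\textbf{Step 1 (existence).} Since $c_\theta$ is continuous on the compact set $\mathcal X'\times\mathcal Y'$, duality holds. I will take a maximizing sequence $(\phi_m,\psi_m)$ and replace it by its double convexification $\psi_m\leftarrow\phi_m^{c}$, $\phi_m\leftarrow\psi_m^{\bar c}$, where the infima run over $\mathcal Y'$ and $\mathcal X'$ respectively. Each replacement preserves feasibility and does not decrease the dual objective.

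Once conjugate, the pair is Lipschitz with the constants of Step 2. I then subtract a constant $a_m$ from $\phi_m$ and add it to $\psi_m$, which leaves the objective and conjugacy unchanged, so as to make $\phi_m(x_0)=0$ at a fixed $x_0\in\mathcal X'$. The sequences are then uniformly bounded and equicontinuous. Arzel\`a--Ascoli gives a uniformly convergent subsequence, and the limit $(\phi,\psi)$ is feasible and attains the supremum.

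One more convexification of the limit ($\psi:=\phi^c$, then $\phi:=\psi^{\bar c}$) keeps optimality. It also yields both identities exactly, because $\phi^{c\bar c c}=\phi^c$: if $\phi=\psi^{\bar c}$ with $\psi=\phi_0^c$, then $\phi^c=\phi_0^{c\bar cc}=\psi$.

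\textbf{Step 2 (Lipschitz).} For each fixed $y$, the function $x\mapsto c_\theta(x,y)-\psi(y)$ is $L_x$-Lipschitz on the convex set $B_x\supset\mathcal X'$, by the mean value theorem and the definition of $L_x$. An infimum of $L_x$-Lipschitz functions is $L_x$-Lipschitz, so $\|\phi\|_{\Lip(\mathcal X')}\le L_x\le L_{\mathrm{pot}}$. Symmetrically, $\|\psi\|_{\Lip(\mathcal Y')}\le L_{\mathrm{pot}}$.

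\textbf{Step 3 (sup bounds).} This is the only delicate point, because the normalization must control $\phi$ and $\psi$ simultaneously. I will choose the additive constant so that $\phi(x_0)=0$ for some $x_0\in\mathcal X'$. Since $\operatorname{diam}(\mathcal X')\le 4$, Step 2 gives $|\phi|\le 4L_{\mathrm{pot}}$ on $\mathcal X'$.

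Then for $y\in\mathcal Y'$, $\psi(y)=\inf_x(c_\theta(x,y)-\phi(x))$ lies in $[-C_{\mathrm{cost}}-4L_{\mathrm{pot}},\,C_{\mathrm{cost}}+4L_{\mathrm{pot}}]$, because $|c_\theta|\le C_{\mathrm{cost}}$ on $B_x\times B_y$. Hence $\|\psi\|_{L^\infty}\le C_\varphi$ and $\|\phi\|_{L^\infty}\le 4L_{\mathrm{pot}}\le C_\varphi$.

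Shifting by a constant preserves both conjugacy identities: $\phi-a$ and $\psi+a$ remain mutual $c_\theta$-transforms. It also preserves the dual value, since $\mu',\nu'$ are probability measures. So the normalized pair is still optimal and satisfies all the claims.
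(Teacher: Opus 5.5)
Your proof is correct and follows the paper's argument. Both pass to a $c_\theta$-conjugate optimal pair, read the $L_{\mathrm{pot}}$-Lipschitz bounds off the infimum formulas, then normalize $\phi(x_0)=0$ and use $\diam(\mathcal X')\le 4$ together with $|c_\theta|\le C_{\mathrm{cost}}$. The only difference is that you prove the existence of an optimal dual pair yourself, by double $c$-convexification of a maximizing sequence followed by Arzel\`a--Ascoli, whereas the paper cites Kantorovich duality on compact spaces from Villani.
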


\begin{proof}
Kantorovich duality for the continuous cost $c_\theta$ on the compact set $\mathcal{X}'\times \mathcal{Y}'$ gives an optimal
dual pair $(u,v)$; see, for example, \cite{Villani}. Replacing this pair by its $c$-transforms gives another
optimal pair $(\phi,\psi)$ satisfying the displayed identities. The Lipschitz estimates follow from
these formulae:
\[
|\phi(x_1)-\phi(x_2)|
\le
\sup_{y\in Y'}|c_\theta(x_1,y)-c_\theta(x_2,y)|
\le
L_x\|x_1-x_2\|,
\]
and similarly for $\psi$.

\medskip

\noindent For the $L^\infty$ bounds, fix $x_0\in \mathcal{X}'$ and replace $(\phi,\psi)$ by
\[
(\phi-\phi(x_0),\ \psi+\phi(x_0));
\]
this preserves optimality and the $c$-transform identities. Then $\phi(x_0)=0$, so
\[
|\phi(x)|
\le
L_{\mathrm{pot}}\,\diam(B_x)
=
4L_{\mathrm{pot}}
\qquad
\text{for all }x\in \mathcal{X}',
\]
which implies $\|\phi\|_{L^\infty(\mathcal{X}')}\le 4L_{\mathrm{pot}}\le C_{\varphi}$. Since
\[
\psi(y)=\inf_{x\in X'}\bigl(c_\theta(x,y)-\phi(x)\bigr),
\]
we get
\[
-C_{\mathrm{cost}}-4L_{\mathrm{pot}}
\le
\psi(y)
\le
C_{\mathrm{cost}}
\qquad
\text{for all }y\in \mathcal{Y}',
\]
hence $\|\psi\|_{L^\infty(\mathcal{Y}')}\le C_{\mathrm{cost}}+4L_{\mathrm{pot}}=C_{\varphi}$.
\end{proof}

\bigskip

\begin{lemma}\label{lem:first-potential-extension}
Fix $\theta\in \Theta$, and let $(\phi,\psi)$ be an optimal pair as in
Lemma~\ref{lem:dual-pair-bounds}. Define
\[
\overline\phi(x):=\inf_{y\in \mathcal{Y}'}\bigl(c_\theta(x,y)-\psi(y)\bigr),
\qquad
x\in B_x,
\]
and
\[
\overline\psi(y):=\inf_{x\in \mathcal{X}'}\bigl(c_\theta(x,y)-\phi(x)\bigr),
\qquad
y\in B_y.
\]
Then:
\begin{itemize}[leftmargin=2em]
\item[a.] $\overline\phi|_{\mathcal{X}'}=\phi$ and $\overline\psi|_{\mathcal{Y}'}=\psi$;
\item[b.] $\overline\phi$ is $\Lambda$-semiconcave on $B_x$, and $\overline\psi$ is $\Lambda$-semiconcave on
$B_y$;
\item[c.] $\|\overline\phi\|_{\Lip(B_x)}\le L_{\mathrm{pot}}$ and $\|\overline\psi\|_{\Lip(B_y)}\le L_{\mathrm{pot}}$;
\item[d.] $\|\overline\phi\|_{L^\infty(B_x)}\le C_{\mathrm{ext}}$ and $\|\overline\psi\|_{L^\infty(B_y)}\le C_{\mathrm{ext}}$,
where
\[
C_{\mathrm{ext}}:=C_{\mathrm{cost}}+C_{\varphi}.
\]
\end{itemize}
\end{lemma}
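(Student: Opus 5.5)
Each of the four items can be read off directly from the infimum formulas defining $\overline\phi$ and $\overline\psi$, using the regularity constants of the cost family from Lemma~\ref{lem:cost-semiconcavity} and the normalizations of Lemma~\ref{lem:dual-pair-bounds}. I treat $\overline\phi$ in detail; $\overline\psi$ is symmetric.

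\textbf{Item (a).} This is immediate. For $x\in\mathcal X'$, the formula defining $\overline\phi(x)$ is exactly the $c$-transform identity for $\phi$ recorded in Lemma~\ref{lem:dual-pair-bounds}. Likewise $\overline\psi|_{\mathcal Y'}=\psi$.

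\textbf{Item (b).} Since $\mathcal Y'\subset B_y$, Lemma~\ref{lem:cost-semiconcavity} shows that each map
\[
x\mapsto c_\theta(x,y)-\psi(y)-\tfrac{\Lambda}{2}\|x\|^2
\]
is concave on the convex set $B_x$. The function $\overline\phi(x)-\tfrac{\Lambda}{2}\|x\|^2$ is the pointwise infimum over $y\in\mathcal Y'$ of these concave functions. It is finite because $\psi$ is bounded and $c_\theta$ is bounded on $B_x\times B_y$, so it is concave. Hence $\overline\phi$ is $\Lambda$-semiconcave on $B_x$.

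\textbf{Item (c).} For $x_1,x_2\in B_x$ we have
\[
|\overline\phi(x_1)-\overline\phi(x_2)|\le\sup_{y\in\mathcal Y'}|c_\theta(x_1,y)-c_\theta(x_2,y)|.
\]
Since $B_x$ is convex, the mean value theorem bounds the right side by $L_x\|x_1-x_2\|\le L_{\mathrm{pot}}\|x_1-x_2\|$.

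\textbf{Item (d).} Use $\|\psi\|_{L^\infty(\mathcal Y')}\le C_\varphi$ from Lemma~\ref{lem:dual-pair-bounds} together with $|c_\theta|\le C_{\mathrm{cost}}$ on $B_x\times B_y$. Every term in the infimum then lies in $[-C_{\mathrm{cost}}-C_\varphi,\,C_{\mathrm{cost}}+C_\varphi]$, so $|\overline\phi|\le C_{\mathrm{ext}}$. For $\overline\psi$, the same argument uses $\|\phi\|_{L^\infty(\mathcal X')}\le C_\varphi$.

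\textbf{Main obstacle.} The only delicate point is that the diameter-based bound of Lemma~\ref{lem:dual-pair-bounds} was stated on $\mathcal X'$, not on $B_x$. Item (d) therefore has to be argued through the cost bound plus the sup bound on the other potential, rather than by Lipschitz extension from a base point. With that route, no further difficulty arises.
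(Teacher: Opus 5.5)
Your proposal is correct and follows the paper's proof item by item. The steps are: (a) read off from the $c$-transform identities of Lemma~\ref{lem:dual-pair-bounds}; (b) via $\overline\phi-\tfrac{\Lambda}{2}\|x\|^2$ as an infimum of concave functions (the paper phrases this as $\tfrac{\Lambda}{2}\|x\|^2-\overline\phi$ being a supremum of convex ones); (c) from the uniform gradient bound $L_x$; and (d) from $|c_\theta|\le C_{\mathrm{cost}}$ together with the $C_\varphi$ sup bound on the other potential. One small correction to your closing remark: for $\overline\phi$ the base-point route also works, since $\overline\phi(x_0)=0$ and (c) give $|\overline\phi|\le 4L_{\mathrm{pot}}\le C_{\mathrm{ext}}$ on $B_x$, so the cost-based argument is genuinely needed only for $\overline\psi$.
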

\begin{proof}
\textbf{(a):} For \(x\in \mathcal{X}'\), Lemma~\ref{lem:dual-pair-bounds} gives
\[
\phi(x)=\inf_{y\in \mathcal{Y}'}\bigl(c_\theta(x,y)-\psi(y)\bigr),
\]
so by definition of \(\overline\phi\),
\[
\overline\phi(x)=\phi(x), \qquad x\in \mathcal{X}'.
\]
Thus, \(\overline\phi|_{X'}=\phi\). Now let \(y\in \mathcal{Y}'\). Since
\[
\overline\phi(x)=\inf_{y'\in \mathcal{Y}'}\bigl(c_\theta(x,y')-\psi(y')\bigr),
\qquad x\in B_x,
\]
we have, for every \(x\in B_x\),
\[
\overline\phi(x)\le c_\theta(x,y)-\psi(y).
\]
Rearranging gives
\[
c_\theta(x,y)-\overline\phi(x)\ge \psi(y),
\qquad x\in B_x.
\]
Taking the infimum over \(x\in \mathcal{X}'\) gives
\[
\overline\psi(y)
=
\inf_{x\in \mathcal{X}'}\bigl(c_\theta(x,y)-\overline\phi(x)\bigr)
\ge \psi(y).
\]
On the other hand, since \(\overline\phi=\phi\) on \(\mathcal{X}'\),
\[
\overline\psi(y)
=
\inf_{x\in \mathcal{X}'}\bigl(c_\theta(x,y)-\overline\phi(x)\bigr)
=
\inf_{x\in \mathcal{X}'}\bigl(c_\theta(x,y)-\phi(x)\bigr)
=
\psi(y),
\]
where the last equality again follows from Lemma~\ref{lem:dual-pair-bounds}. Hence
\[
\overline\psi(y)=\psi(y), \qquad y\in \mathcal{Y}',
\]
so \(\overline\psi|_{\mathcal{Y}'}=\psi\).

\medskip

\noindent \textbf{(b)}: Fix \(y\in \mathcal{Y}'\). By Lemma~\ref{lem:cost-semiconcavity}, the map
\[
x\longmapsto c_\theta(x,y)-\frac{\Lambda}{2}\|x\|^2
\]
is concave on \(B_x\). Therefore
\[
x\longmapsto \frac{\Lambda}{2}\|x\|^2-c_\theta(x,y)+\psi(y)
\]
is convex on \(B_x\). Since
\[
\frac{\Lambda}{2}\|x\|^2-\overline\phi(x)
=
\sup_{y\in \mathcal{Y}'}
\left[
\frac{\Lambda}{2}\|x\|^2-c_\theta(x,y)+\psi(y)
\right],
\]
the left-hand side is a supremum of convex functions and is therefore convex. Thus
\(\overline\phi\) is \(\Lambda\)-semiconcave on \(B_x\). The argument for \(\overline\psi\) on \(B_y\) is identical. 

\medskip

\noindent \textbf{(c)}: The Lipschitz bounds are proved as as in Lemma~\ref{lem:dual-pair-bounds}. For
\(x_1,x_2\in B_x\),
\[
|\overline\phi(x_1)-\overline\phi(x_2)|
\le
\sup_{y\in \mathcal{Y}'}|c_\theta(x_1,y)-c_\theta(x_2,y)|
\le
L_x\|x_1-x_2\|
\le
L_{\mathrm{pot}}\|x_1-x_2\|,
\]
and similarly \(\|\overline\psi\|_{\Lip(B_y)}\le L_{\mathrm{pot}}\).

\medskip

\noindent \textbf{(d)}:  Finally, for every \(x\in B_x\) and \(y\in \mathcal{Y}'\), Lemma~\ref{lem:dual-pair-bounds} gives
\[
|\psi(y)|\le C_{\varphi}
\]
and the definition of \(C_{\mathrm{cost}}\) gives
\[
|c_\theta(x,y)|\le C_{\mathrm{cost}}.
\]
Hence
\[
-C_{\mathrm{cost}}-C_{\varphi}\le c_\theta(x,y)-\psi(y)\le C_{\mathrm{cost}}+C_{\varphi}.
\]
Taking the infimum over \(y\in \mathcal{Y}'\),
\[
|\overline\phi(x)|\le C_{\mathrm{cost}}+C_{\varphi}, \qquad x\in B_x.
\]
Thus
\[
\|\overline\phi\|_{L^\infty(B_x)}\le C_{\mathrm{ext}},
\qquad
C_{\mathrm{ext}}:=C_{\mathrm{cost}}+C_{\varphi}.
\]
The same argument, using \(|\phi(x)|\le C_{\varphi}\) on \(\mathcal{X}'\), gives
\[
\|\overline\psi\|_{L^\infty(B_y)}\le C_{\mathrm{ext}}.
\]
\end{proof}

\bigskip

\begin{lemma}\label{lem:global-feasible-second}
Fix $\theta\in \Theta$. Let $\mu',\nu'$ be probability measures supported on compact sets
\[
\mathcal{X}'\subset B_x,
\qquad
\mathcal{Y}'\subset B_y,
\]
and let $(\phi,\psi)$ be an optimal pair for $\OT_{c_\theta}(\mu',\nu')$, normalized as in
Lemma~\ref{lem:dual-pair-bounds}. Let $\overline\phi$ be the extension from
Lemma~\ref{lem:first-potential-extension}. Define
\[
\mathsf C_\theta f(y):=\inf_{x\in B_x}\bigl(c_\theta(x,y)-f(x)\bigr),
\qquad
y\in B_y,
\]
and set
\[
\widetilde\psi:=\mathsf C_\theta\overline\phi.
\]
Then:
\begin{itemize}[leftmargin=2em]
\item[a.] $\widetilde\psi(y)=\psi(y)$ for all $y\in \mathcal{Y}'$;
\item[b.] $(\overline\phi,\widetilde\psi)$ is dual feasible on $B_x\times B_y$, i.e.
\[
\overline\phi(x)+\widetilde\psi(y)\le c_\theta(x,y)
\qquad
\text{for all }(x,y)\in B_x\times B_y;
\] and 
\[
\OT_{c_\theta}(\mu',\nu')
=
\int \overline\phi\,d\mu' + \int \widetilde\psi\,d\nu'.
\]
\end{itemize}
\end{lemma}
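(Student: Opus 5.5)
The argument is a two-sided comparison on $\mathcal Y'$, followed by the observation that feasibility on the larger box is built into the definition of $\mathsf C_\theta$.

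\textbf{Part (a).} Fix $y\in\mathcal Y'$.

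For the upper bound, use $\mathcal X'\subset B_x$ and $\overline\phi|_{\mathcal X'}=\phi$ from Lemma~\ref{lem:first-potential-extension}(a). Restricting the infimum defining $\widetilde\psi(y)$ to $\mathcal X'$ gives
\[
\widetilde\psi(y)\le \inf_{x\in\mathcal X'}\bigl(c_\theta(x,y)-\phi(x)\bigr)=\psi(y),
\]
where the last equality is the $c$-transform identity of Lemma~\ref{lem:dual-pair-bounds}.

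For the lower bound, use the definition of $\overline\phi$ as an infimum over $\mathcal Y'$. Since $y\in\mathcal Y'$, for every $x\in B_x$ we have $\overline\phi(x)\le c_\theta(x,y)-\psi(y)$. Hence $c_\theta(x,y)-\overline\phi(x)\ge\psi(y)$ for all $x\in B_x$. Taking the infimum over $x\in B_x$ gives $\widetilde\psi(y)\ge\psi(y)$. This is the same manoeuvre already used in the proof of Lemma~\ref{lem:first-potential-extension}(a), but now with the infimum over all of $B_x$.

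Combining the two bounds gives $\widetilde\psi=\psi$ on $\mathcal Y'$.

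\textbf{Feasibility in part (b).} This is immediate from the definition of $\mathsf C_\theta$. For any $(x,y)\in B_x\times B_y$,
\[
\widetilde\psi(y)\le c_\theta(x,y)-\overline\phi(x).
\]
Finiteness of $\widetilde\psi$ should also be noted. It holds because $\overline\phi$ is bounded on $B_x$ by Lemma~\ref{lem:first-potential-extension}(d) and $c_\theta$ is bounded by $C_{\mathrm{cost}}$, so $\widetilde\psi$ is a real-valued function.

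\textbf{Value identity in part (b).} The measure $\mu'$ is concentrated on $\mathcal X'$ and $\nu'$ on $\mathcal Y'$. By Lemma~\ref{lem:first-potential-extension}(a) and part (a),
\[
\int\overline\phi\,d\mu'+\int\widetilde\psi\,d\nu'=\int\phi\,d\mu'+\int\psi\,d\nu'.
\]
The right-hand side equals $\OT_{c_\theta}(\mu',\nu')$ by optimality of $(\phi,\psi)$ from Lemma~\ref{lem:dual-pair-bounds}.

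Integrability is not an issue, since all the functions involved are bounded and continuous (Lipschitz). The continuity of $\widetilde\psi$ follows as in Lemma~\ref{lem:first-potential-extension}(c), with modulus $L_y$.

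\textbf{Main obstacle.} Only the lower bound in (a) needs care, because the infimum defining $\widetilde\psi$ runs over the larger set $B_x$ rather than $\mathcal X'$. This is exactly where the extension $\overline\phi$, itself defined as an infimum over $\mathcal Y'$, is needed. Everything else is bookkeeping.
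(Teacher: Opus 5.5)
Your proposal is correct and follows essentially the same route as the paper. Both arguments get the upper bound in (a) by restricting the infimum to $\mathcal X'$ and get the lower bound from $\overline\phi(x)\le c_\theta(x,y)-\psi(y)$ for $y\in\mathcal Y'$. Feasibility then comes directly from the definition of $\mathsf C_\theta$, and the value identity from the agreement of the functions on the supports; your remarks on finiteness and continuity of $\widetilde\psi$ are a harmless extra bit of care.
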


\begin{proof}
\textbf{(a):} For $y\in \mathcal{Y}'$ and $x\in B_x$, the definition of $\overline\phi$ gives
\[
\overline\phi(x)\le c_\theta(x,y)-\psi(y),
\]
hence
\[
c_\theta(x,y)-\overline\phi(x)\ge \psi(y).
\]
Taking the infimum over $x\in B_x$,
\[
\mathsf C_\theta\overline\phi(y)\ge \psi(y).
\]
On the other hand, since $\overline\phi=\phi$ on $\mathcal{X}'$ by
Lemma~\ref{lem:first-potential-extension},
\[
\mathsf C_\theta\overline\phi(y)
\le
\inf_{x\in \mathcal{X}'}\bigl(c_\theta(x,y)-\overline\phi(x)\bigr)
=
\inf_{x\in \mathcal{X}'}\bigl(c_\theta(x,y)-\phi(x)\bigr)
=
\psi(y).
\]
Thus $\widetilde\psi(y)=\mathsf C_\theta\overline\phi(y)=\psi(y)$ for all $y\in \mathcal{Y}'$.

\medskip

\noindent \textbf{(b):} By definition of the $c_\theta$-transform,
\[
\widetilde\psi(y)=\inf_{x\in B_x}\bigl(c_\theta(x,y)-\overline\phi(x)\bigr),
\]
so
\[
\overline\phi(x)+\widetilde\psi(y)\le c_\theta(x,y)
\qquad
\text{for all }(x,y)\in B_x\times B_y.
\]
Hence $(\overline\phi,\widetilde\psi)$ is dual feasible on $B_x\times B_y$. Finally, because $\overline\phi=\phi$ on $\mathcal{X}'$ and $\widetilde\psi=\psi$ on $\mathcal{Y}'$,
\[
\int \overline\phi\,d\mu' + \int \widetilde\psi\,d\nu'
=
\int \phi\,d\mu' + \int \psi\,d\nu'
=
\OT_{c_\theta}(\mu',\nu').
\]
\end{proof}

\bigskip

\noindent We now introduce the two potential classes that drive the empirical-process argument:
\[
\cF_X
:=
\Bigl\{
f:B_x\to \R:
f \text{ is }\Lambda\text{-semiconcave on }B_x,
\ \|f\|_{\Lip(B_x)}\le L_{\mathrm{pot}},
\ \|f\|_{L^\infty(B_x)}\le C_{\mathrm{ext}}
\Bigr\},
\]
and
\[
\cG_Y
:=
\bigcup_{\theta\in \Theta} \mathsf C_\theta(\cF_X)
\subset C(B_y).
\]
By Lemmas~\ref{lem:first-potential-extension} and \ref{lem:global-feasible-second}, every optimal first
potential extends to an element of $\cF_X$, and every optimal dual pair may be represented by
functions from the ambient classes $\cF_X$ and $\cG_Y$ without changing its value on the original
marginals.

\medskip

\noindent We will also use that every $g\in \cG_Y$ is $L_y$-Lipschitz on $B_y$. Indeed, if $g=\mathsf C_\theta f$ with
$\theta\in \Theta$ and $f\in \cF_X$, then for any $y_1,y_2\in B_y$,
\[
g(y_1)-g(y_2)
\le
\sup_{x\in B_x}\bigl(c_\theta(x,y_1)-c_\theta(x,y_2)\bigr)
\le
L_y\|y_1-y_2\|,
\]
and the same inequality with $y_1$ and $y_2$ interchanged gives
\[
|g(y_1)-g(y_2)|\le L_y\|y_1-y_2\|\le L_{\mathrm{pot}}\|y_1-y_2\|.
\]

\begin{proposition}\label{prop:transport-reduction}
There exists a constant $C=C(r,k,d_x,d_y)$ such that
\[
\E\Bigl|
\CGW_{r,k}(\mu,\nu)-\CGW_{r,k}(\widetilde\mu_n,\widetilde\nu_n)
\Bigr|
\le
4\,\E\sup_{f\in \cF_X}|\Pair{\mu-\widehat\mu_n}{f}|
+
4\,\E\sup_{g\in \cG_Y}|\Pair{\nu-\widehat\nu_n}{g}|
+
\frac{C}{\sqrt n}.
\]
\end{proposition}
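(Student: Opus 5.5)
We bound the difference of the two min–max values in Theorem~\ref{thm:duality-main} by a supremum over $\theta$ of differences of transport costs. We then compare the centered empirical measures with the uncentered ones at cost $O(n^{-1/2})$. By Corollary~\ref{cor:uniform-boxes} with $(\rho_x,\rho_y)=(2,2)$, the same compact set $\Theta=\Theta_+\times\Theta_-$ and cost family represent both $\CGW_{r,k}(\mu,\nu)$ and $\CGW_{r,k}(\widetilde\mu_n,\widetilde\nu_n)$. Write $\Phi_{\mu,\nu}(u,v):=-\|u\|^2+\|v\|^2+\OT_{c_{u,v}}(\mu,\nu)$. The elementary inequality $|\inf_v\sup_u a-\inf_v\sup_u b|\le \sup_{u,v}|a-b|$ then gives
\[
\bigl|\CGW_{r,k}(\mu,\nu)-\CGW_{r,k}(\widetilde\mu_n,\widetilde\nu_n)\bigr|
\le 4\sup_{\theta\in\Theta}\bigl|\OT_{c_\theta}(\mu,\nu)-\OT_{c_\theta}(\widetilde\mu_n,\widetilde\nu_n)\bigr|,
\]
because the quadratic terms cancel.

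Next we split this through the uncentered pair $(\widehat\mu_n,\widehat\nu_n)$, which is supported in $B_x\times B_y$. For the first piece, fix $\theta$. Take optimal pairs for $(\mu,\nu)$ and for $(\widehat\mu_n,\widehat\nu_n)$, normalized as in Lemma~\ref{lem:dual-pair-bounds}, and extend them to $(\overline\phi,\widetilde\psi)\in\cF_X\times\cG_Y$ via Lemmas~\ref{lem:first-potential-extension} and \ref{lem:global-feasible-second}. Feasibility on all of $B_x\times B_y$ makes each extended pair admissible for the other problem. Optimality of one pair and feasibility of the other then give the two-sided bound
\[
|\OT_{c_\theta}(\mu,\nu)-\OT_{c_\theta}(\widehat\mu_n,\widehat\nu_n)|\le \sup_{f\in\cF_X}|\Pair{\mu-\widehat\mu_n}{f}|+\sup_{g\in\cG_Y}|\Pair{\nu-\widehat\nu_n}{g}|,
\]
as in \eqref{eq:ot-empirical-reduction}. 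The right-hand side does not depend on $\theta$, so taking $\sup_\theta$ and then expectations gives the two main terms with the factor $4$.

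For the second piece, $\widetilde\mu_n$ is the pushforward of $\widehat\mu_n$ under $x\mapsto x-\overline X_n$, and similarly for $\nu$. Both pairs are supported in $B_x\times B_y$. Coupling each atom with its shifted copy and using that $c_\theta$ is Lipschitz uniformly in $\theta$ on $B_x\times B_y$, with constant at most $L_{\mathrm{pot}}$ in each variable, we get
\[
\sup_\theta|\OT_{c_\theta}(\widehat\mu_n,\widehat\nu_n)-\OT_{c_\theta}(\widetilde\mu_n,\widetilde\nu_n)|\le L_{\mathrm{pot}}\bigl(\|\overline X_n\|+\|\overline Y_n\|\bigr).
\]
Concretely, for an optimal $\pi$ for one pair, the pushforward of $\pi$ under $(x,y)\mapsto(x-\overline X_n,y-\overline Y_n)$ is admissible for the other, and symmetrically. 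Since $\mu,\nu$ are centered with support in the unit ball, $\E\|\overline X_n\|\le(\E\|\overline X_n\|^2)^{1/2}\le n^{-1/2}$, and likewise for $\overline Y_n$. This yields the $C/\sqrt n$ term.

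The main obstacle is bookkeeping rather than analysis. The duality representation must hold with the same $\Theta$ for the three pairs $(\mu,\nu)$, $(\widehat\mu_n,\widehat\nu_n)$ and $(\widetilde\mu_n,\widetilde\nu_n)$. In fact only $(\mu,\nu)$ and $(\widetilde\mu_n,\widetilde\nu_n)$ enter the min–max comparison, and Corollary~\ref{cor:uniform-boxes} with radius $2$ covers both. The intermediate pair $(\widehat\mu_n,\widehat\nu_n)$ is used only inside $\OT_{c_\theta}$ for fixed $\theta$, where no duality representation is needed. One must also check that the potential classes are independent of the measures. This holds because $L_{\mathrm{pot}}$, $C_{\mathrm{ext}}$ and $\Lambda$ depend only on $\Theta$, $B_x$ and $B_y$.
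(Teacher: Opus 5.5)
Your proof is correct. Its skeleton is the paper's: the min--max reduction to $4\sup_{\theta\in\Theta}|\OT_{c_\theta}(\mu,\nu)-\OT_{c_\theta}(\widetilde\mu_n,\widetilde\nu_n)|$ with the common $\Theta$ from Corollary~\ref{cor:uniform-boxes}, followed by the two-sided dual bound via extended potentials in $\cF_X\times\cG_Y$. The only difference is where the centering is absorbed.

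The paper runs the dual comparison directly between $(\mu,\nu)$ and $(\widetilde\mu_n,\widetilde\nu_n)$. This gives $\sup_{f\in\cF_X}|\Pair{\mu-\widetilde\mu_n}{f}|+\sup_{g\in\cG_Y}|\Pair{\nu-\widetilde\nu_n}{g}|$. It then undoes the shift inside each empirical process, using that every $f\in\cF_X$ and every $g\in\cG_Y$ is $L_{\mathrm{pot}}$-Lipschitz. This is why it proves the Lipschitz bound for $\cG_Y$ just before the proposition.

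You instead split at the level of transport values through the uncentered pair $(\widehat\mu_n,\widehat\nu_n)$. This is legitimate because the samples lie in the unit balls and hence in $B_x\times B_y$. You apply the dual comparison between $(\mu,\nu)$ and $(\widehat\mu_n,\widehat\nu_n)$. You then control $\OT_{c_\theta}(\widehat\mu_n,\widehat\nu_n)-\OT_{c_\theta}(\widetilde\mu_n,\widetilde\nu_n)$ by the translated coupling and the uniform Lipschitz bounds on $c_\theta$. Those bounds follow from the gradient bounds $L_x,L_y$ and the convexity of $B_x,B_y$.

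Your centering step therefore uses only the regularity of the cost family and never needs the Lipschitz property of $\cG_Y$. The paper's version stays entirely on the dual side. Both yield the same constant $8L_{\mathrm{pot}}$ in front of $n^{-1/2}$.
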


\begin{proof}
By Theorem~\ref{thm:duality-main} and the inequality
\[
\left|
\inf_{v\in \Theta_-}\sup_{u\in \Theta_+} A_{u,v}
-
\inf_{v\in \Theta_-}\sup_{u\in \Theta_+} B_{u,v}
\right|
\le
\sup_{(u,v)\in \Theta_+\times \Theta_-}|A_{u,v}-B_{u,v}|,
\]
we have
\[
\Bigl|
\CGW_{r,k}(\mu,\nu)-\CGW_{r,k}(\widetilde\mu_n,\widetilde\nu_n)
\Bigr|
\le
4\sup_{\theta\in \Theta}
\Bigl|
\OT_{c_\theta}(\mu,\nu)-\OT_{c_\theta}(\widetilde\mu_n,\widetilde\nu_n)
\Bigr|.
\]
Set
\[
\Delta_n
:=
\sup_{f\in \cF_X}|\Pair{\mu-\widetilde\mu_n}{f}|
+
\sup_{g\in \cG_Y}|\Pair{\nu-\widetilde\nu_n}{g}|.
\]
Fix $\theta\in \Theta$. Let $(\phi_\theta,\psi_\theta)$ be an optimal pair for $\OT_{c_\theta}(\mu,\nu)$,
normalized as in Lemma~\ref{lem:dual-pair-bounds}. Let $\overline\phi_\theta$ be the extension from
Lemma~\ref{lem:first-potential-extension}, and define
\[
\widetilde\psi_\theta:=\mathsf C_\theta\overline\phi_\theta.
\]
By Lemma~\ref{lem:global-feasible-second},
\[
\OT_{c_\theta}(\mu,\nu)
=
\int \overline\phi_\theta\,d\mu + \int \widetilde\psi_\theta\,d\nu,
\]
and $(\overline\phi_\theta,\widetilde\psi_\theta)$ is dual feasible on $B_x\times B_y$. Since
$\widetilde\mu_n$ and $\widetilde\nu_n$ are supported on $B_x$ and $B_y$, dual feasibility gives
\[
\OT_{c_\theta}(\widetilde\mu_n,\widetilde\nu_n)
\ge
\int \overline\phi_\theta\,d\widetilde\mu_n
+
\int \widetilde\psi_\theta\,d\widetilde\nu_n.
\]
Therefore
\[
\OT_{c_\theta}(\mu,\nu)-\OT_{c_\theta}(\widetilde\mu_n,\widetilde\nu_n)
\le
\Pair{\mu-\widetilde\mu_n}{\overline\phi_\theta} + \Pair{\nu-\widetilde\nu_n}{\widetilde\psi_\theta}
\le
\Delta_n.
\]
Applying the same argument with an optimal pair for $\OT_{c_\theta}(\widetilde\mu_n,\widetilde\nu_n)$ gives
\[
\OT_{c_\theta}(\widetilde\mu_n,\widetilde\nu_n)-\OT_{c_\theta}(\mu,\nu)
\le
\Delta_n.
\]
Hence
\[
\Bigl|
\OT_{c_\theta}(\mu,\nu)-\OT_{c_\theta}(\widetilde\mu_n,\widetilde\nu_n)
\Bigr|
\le
\Delta_n.
\]
Taking the supremum over $\theta\in \Theta$, we obtain
\[
\Bigl|
\CGW_{r,k}(\mu,\nu)-\CGW_{r,k}(\widetilde\mu_n,\widetilde\nu_n)
\Bigr|
\le
4\Delta_n.
\]
Now fix $f\in \cF_X$. Then
\[
\Pair{\mu-\widetilde\mu_n}{f}
=
\Pair{\mu-\widehat\mu_n}{f}
+
\frac1n\sum_{i=1}^n\bigl(f(X_i)-f(X_i-\overline X_n)\bigr).
\]
Since every $f\in \cF_X$ is $L_{\mathrm{pot}}$-Lipschitz on $B_x$,
\[
|\Pair{\mu-\widetilde\mu_n}{f}|
\le
|\Pair{\mu-\widehat\mu_n}{f}|
+
L_{\mathrm{pot}}\|\overline X_n\|.
\]
Taking the supremum over $f\in \cF_X$ gives
\[
\sup_{f\in \cF_X}|\Pair{\mu-\widetilde\mu_n}{f}|
\le
\sup_{f\in \cF_X}|\Pair{\mu-\widehat\mu_n}{f}|
+
L_{\mathrm{pot}}\|\overline X_n\|.
\]

\medskip

\noindent The same argument on the $\nu$-side, using the Lipschitz bound just established for every
$g\in \cG_Y$, gives
\[
\sup_{g\in \cG_Y}|\Pair{\nu-\widetilde\nu_n}{g}|
\le
\sup_{g\in \cG_Y}|\Pair{\nu-\widehat\nu_n}{g}|
+
L_{\mathrm{pot}}\|\overline Y_n\|.
\]
Therefore
\[
\begin{aligned}
\Bigl|
\CGW_{r,k}(\mu,\nu)-\CGW_{r,k}(\widetilde\mu_n,\widetilde\nu_n)
\Bigr|
&\le
4\sup_{f\in \cF_X}|\Pair{\mu-\widehat\mu_n}{f}|
+
4\sup_{g\in \cG_Y}|\Pair{\nu-\widehat\nu_n}{g}|
\\
&\quad+
4L_{\mathrm{pot}}\|\overline X_n\|
+
4L_{\mathrm{pot}}\|\overline Y_n\|.
\end{aligned}
\]
Taking expectations and using Cauchy--Schwarz,
\[
\mathbb E\|\overline X_n\|\le \bigl(\mathbb E\|\overline X_n\|^2\bigr)^{1/2},
\qquad
\mathbb E\|\overline Y_n\|\le \bigl(\mathbb E\|\overline Y_n\|^2\bigr)^{1/2}.
\]
Moreover,
\[
\mathbb E\|\overline X_n\|^2
=
\frac1{n^2}\sum_{i=1}^n \mathbb E\|X_i\|^2
+
\frac1{n^2}\sum_{i\neq j}\mathbb E[X_i]\cdot \mathbb E[X_j].
\]
Under the normalized hypotheses \eqref{eq:normalized-hypotheses}, \(\mathbb E[X_i]=0\), so
\[
\mathbb E\|\overline X_n\|^2=\frac1n\,\mathbb E\|X_1\|^2\le \frac1n.
\]
Similarly, \(\mathbb E\|\overline Y_n\|^2\le 1/n\). Therefore
\[
\mathbb E\|\overline X_n\|+\mathbb E\|\overline Y_n\|\le \frac{2}{\sqrt n},
\]
The result follows after absorbing the factor \(8L_{\mathrm{pot}}\) into the constant \(C\).
\end{proof}

\subsection{Entropy bounds and completion of the transport estimate}

By Lemma~\ref{lem:semicolon-entropy} and Proposition~\ref{prop:g-entropy} below, the classes
\(\cF_X\) and \(\cG_Y\) have finite covering numbers in the sup norm for every sufficiently small
\(\varepsilon>0\). Since both classes are also uniformly bounded in the sup norm, they are totally
bounded, and hence separable. Let
\(\mathcal{D}_1 \subset \cF_X\) and \(\mathcal{D}_2 \subset \cG_Y\) be countable dense subsets. For any signed measure
\(\lambda\) with total variation at most \(2\), the map \(f\mapsto |\Pair{\lambda}{f}|\) is \(2\)-Lipschitz with
respect to \(\|\cdot\|_\infty\), since
\[
\bigl||\Pair{\lambda}{f}|-|\Pair{\lambda}{h}|\bigr|
\le |\Pair{\lambda}{f-h}|
\le \|\lambda\|_{\mathrm{TV}}\|f-h\|_\infty
\le 2\|f-h\|_\infty.
\]
Applying this with \(\lambda=\mu-\widehat\mu_n\) and \(\lambda=\nu-\widehat\nu_n\), we get
\[
\sup_{f\in \cF_X}|\Pair{\mu-\widehat\mu_n}{f}|
=
\sup_{f\in \mathcal{D}_1}|\Pair{\mu-\widehat\mu_n}{f}|,
\qquad
\sup_{g\in \cG_Y}|\Pair{\nu-\widehat\nu_n}{g}|
=
\sup_{g\in \mathcal{D}_2}|\Pair{\nu-\widehat\nu_n}{g}|
\]
pointwise. In particular, the empirical-process suprema appearing below are measurable, so ordinary expectations may be used without further comment.

\begin{proposition}\label{prop:f-empirical}
There exists a constant $C=C(r,k,d_x,d_y)$ such that
\[
\E\sup_{f\in \cF_X}|\Pair{\mu-\widehat\mu_n}{f}|
\le
C\,
\rho_n(d_*).
\]
\end{proposition}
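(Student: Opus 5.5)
The plan is to combine symmetrization, Dudley's chaining bound, and the Bronshtein-type entropy estimate of Lemma~\ref{lem:semicolon-entropy}. First, observe that $\cF_X$ is exactly the class $\cF_{\mathrm{sc}}(B_x;\Lambda,L_{\mathrm{pot}},C_{\mathrm{ext}})$ of Lemma~\ref{lem:semicolon-entropy} with $D=B_{d_x}(0,2)$. Since $\Lambda$, $L_{\mathrm{pot}}$ and $C_{\mathrm{ext}}$ depend only on $r,k,d_x,d_y$, we get
\[
\log\Cov(\varepsilon,\cF_X,\|\cdot\|_\infty)\le C\,\varepsilon^{-d_*/2},\qquad 0<\varepsilon\le 2C_{\mathrm{ext}},
\]
and the covering number is $1$ for $\varepsilon\ge C_{\mathrm{ext}}$. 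Here we use $d_x=d_*$, which is the standing assumption of this section.

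Next, symmetrize. Measurability is already settled by the countable dense subsets discussed above. With i.i.d.\ Rademacher signs $\epsilon_i$, we have
\[
\E\sup_{f\in\cF_X}|\Pair{\mu-\widehat\mu_n}{f}|\le 2\,\E\sup_{f\in\cF_X}\Bigl|\frac1n\sum_{i=1}^n\epsilon_i f(X_i)\Bigr|.
\]
Conditionally on the sample, the Rademacher process is sub-Gaussian with respect to the empirical $L^2(\widehat\mu_n)$ metric, and that metric is dominated by $\|\cdot\|_\infty$. Dudley's entropy integral, in its truncated form, therefore gives for every $\delta\in(0,C_{\mathrm{ext}}]$
\[
\E\sup_{f\in\cF_X}\Bigl|\frac1n\sum_i\epsilon_i f(X_i)\Bigr|\lesssim \delta+\frac{1}{\sqrt n}\int_{\delta}^{C_{\mathrm{ext}}}\sqrt{\log\Cov(\varepsilon,\cF_X,\|\cdot\|_\infty)}\,d\varepsilon+\frac{C_{\mathrm{ext}}}{\sqrt n}.
\]
The last term accounts for the absolute value, handled by fixing one element $f_0$ and using $|\frac1n\sum\epsilon_i f_0(X_i)|\lesssim C_{\mathrm{ext}}/\sqrt n$.

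Then split into the three regimes using $\sqrt{\log\Cov}\lesssim\varepsilon^{-d_*/4}$.
\begin{itemize}
\item If $d_*<4$, the integrand is integrable at $0$. Take $\delta=0$ to get $O(n^{-1/2})$.
\item If $d_*=4$, the integral equals $\log(C_{\mathrm{ext}}/\delta)$. Take $\delta=n^{-1/2}$ to get $O(n^{-1/2}\log(en))$.
\item If $d_*>4$, the integral is $\lesssim\delta^{1-d_*/4}$. Balance $\delta\asymp n^{-1/2}\delta^{1-d_*/4}$, which gives $\delta=n^{-2/d_*}$ and hence a rate of $n^{-2/d_*}$.
\end{itemize}
In every case the bound is $\lesssim\rho_n(d_*)$, with constants depending only on $r,k,d_x,d_y$.

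The only delicate point is making sure the entropy bound is applied in the correct dimension, namely $d_x=d_*$, rather than in $d_y$. This holds because $\cF_X$ consists of functions on $B_x\subset\R^{d_x}$. The genuinely harder dimension-reduction issue belongs to $\cG_Y$ and is deferred to Proposition~\ref{prop:g-entropy}. Beyond this, the argument is routine and standard.
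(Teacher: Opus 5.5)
Your proposal is correct and follows the paper's proof. Both apply Lemma~\ref{lem:semicolon-entropy} on $B_x$ with $d_x=d_*$ to get $\log\Cov(\varepsilon,\cF_X,\|\cdot\|_\infty)\lesssim\varepsilon^{-d_*/2}$, then use a truncated Dudley bound with the same three-case choice of cutoff ($\delta=0$, $n^{-1/2}$, $n^{-2/d_*}$). The only difference is that you spell out the symmetrization step and the extra $C_{\mathrm{ext}}/\sqrt n$ term, which the paper folds into its citation of Dudley's inequality for uniformly bounded classes.
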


\begin{proof}
By Lemma~\ref{lem:semicolon-entropy}, with $D=B_x$, there exists $C_0$ such that
\[
\log \Cov\bigl(\varepsilon,\cF_X,\|\cdot\|_\infty\bigr)\le C_0\varepsilon^{-d_*/2}.
\]
Dudley's entropy integral inequality for uniformly bounded classes
\cite{Dudley,vdVW} therefore gives
\[
\E\sup_{f\in \cF_X}|\Pair{\mu-\widehat\mu_n}{f}|
\le
\inf_{\alpha>0}
\left\{
\alpha
+
\frac{C}{\sqrt n}\int_\alpha^{2C_{\mathrm{ext}}}\varepsilon^{-d_*/4}\,d\varepsilon
\right\}.
\]
If $d_*>4$, setting $\alpha=n^{-2/d_*}$ gives a bound of order $n^{-2/d_*}$. If $d_*=4$, choosing
$\alpha=n^{-1/2}$ gives a bound of order $n^{-1/2}\log(en)$. If $d_*<4$, the integral converges at $0$, so
the bound is $O(n^{-1/2})$. Since $C_{\mathrm{ext}}$ depends only on $r,k,d_x,d_y$, the displayed estimate
follows.
\end{proof}

\begin{proposition}\label{prop:g-entropy}
There exists a constant $C=C(r,k,d_x,d_y)$ such that
\[
\log \Cov\bigl(\varepsilon,\cG_Y,\|\cdot\|_{\infty,B_y}\bigr)
\le
C\,\varepsilon^{-d_*/2}
\qquad
\text{for all }0<\varepsilon\le 1.
\]
\end{proposition}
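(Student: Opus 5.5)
The plan is to cover $\cG_Y=\bigcup_{\theta\in\Theta}\mathsf C_\theta(\cF_X)$ by combining three ingredients: a cover of $\cF_X$, a finite net of the compact parameter set $\Theta$, and Lipschitz stability of the map $(\theta,f)\mapsto \mathsf C_\theta f$ in sup norm. The key observation is that the $c_\theta$-transform is $1$-Lipschitz in $f$. For $f,h\in\cF_X$ and every $y\in B_y$,
\[
|\mathsf C_\theta f(y)-\mathsf C_\theta h(y)|\le \sup_{x\in B_x}|f(x)-h(x)|=\|f-h\|_{\infty,B_x},
\]
since an infimum of functions moves by at most the sup of the perturbation. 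Likewise, for $\theta,\theta'\in\Theta$,
\[
|\mathsf C_\theta f(y)-\mathsf C_{\theta'}f(y)|\le \sup_{(x,y)\in B_x\times B_y}|c_\theta(x,y)-c_{\theta'}(x,y)|\le L_\Theta\|\theta-\theta'\|.
\]
Here $L_\Theta$ is finite because $c_\theta$ is linear in $\theta=(u,v)$ with coefficients given by the fixed polynomials $P_i$. These are bounded on $B_x\times B_y$ by a constant depending only on $r,k,d_x,d_y$, so $L_\Theta\le \sqrt{J}\,\max_i\|P_i\|_{\infty,B_x\times B_y}$.

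With these two estimates the covering argument is routine. Fix $\varepsilon\in(0,1]$. Take an $(\varepsilon/2)$-net $\{f_1,\dots,f_N\}$ of $\cF_X$ in $\|\cdot\|_{\infty,B_x}$; by Lemma~\ref{lem:semicolon-entropy} applied with $d=d_x=d_*$ and $R_0=2$,
\[
\log N\le C_0(\varepsilon/2)^{-d_*/2}.
\]
Take an $(\varepsilon/(2L_\Theta))$-net $\{\theta_1,\dots,\theta_M\}$ of $\Theta$. Since $\Theta\subset\R^J$ is compact with diameter depending only on $r,k,d_x,d_y$ (Corollary~\ref{cor:uniform-boxes}), the standard volumetric bound gives
\[
M\le (C_1/\varepsilon)^J.
\]
For any $g=\mathsf C_\theta f\in\cG_Y$, choose $\theta_j$ and $f_i$ close to $\theta$ and $f$. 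The triangle inequality then gives
\[
\|g-\mathsf C_{\theta_j}f_i\|_{\infty,B_y}\le \varepsilon/2+\varepsilon/2=\varepsilon.
\]
A small subtlety is that the net centers should lie in the class, or one should pass to $2\varepsilon$ if they do not; this only changes constants.

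It follows that
\[
\log\Cov(\varepsilon,\cG_Y,\|\cdot\|_{\infty,B_y})\le C_0 2^{d_*/2}\varepsilon^{-d_*/2}+J\log(C_1/\varepsilon).
\]
Since $\log(1/\varepsilon)\lesssim\varepsilon^{-d_*/2}$ on $(0,1]$ for $d_*\ge1$, both terms are absorbed into $C\varepsilon^{-d_*/2}$. Here it matters that the entropy is driven by the lower dimension $d_x$ even though $\cG_Y$ consists of functions on the higher-dimensional ball $B_y$: we never use the semiconcavity of $\cG_Y$ on $B_y$ itself, which would only give the exponent $d_y/2$. Everything is transported from the first-potential class.

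The main obstacle is making the $\theta$-stability uniform, that is, ensuring $L_\Theta$ and the net size depend only on $(r,k,d_x,d_y)$. This follows from Corollary~\ref{cor:uniform-boxes} with $\rho_x=\rho_y=2$, which fixes the polynomials $P_i$ and the boxes $\Theta_\pm$ once and for all.
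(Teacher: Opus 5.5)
Your proposal is correct and follows essentially the same route as the paper. The paper's Proposition~\ref{prop:G-covering} combines the $1$-Lipschitz property of $\mathsf C_\theta$ in $f$ (Lemma~\ref{lem:Ttheta-contraction}) with the parametric Lipschitz bound (Lemma~\ref{lem:parametric-lipschitz}) to get the same product covering estimate, and then applies Lemma~\ref{lem:semicolon-entropy} and the volumetric Lemma~\ref{lem:volumetric-covering} and absorbs the $\log(1/\varepsilon)$ term exactly as you do. Your concern about where the net centers lie is unnecessary, because the contraction estimate holds for arbitrary bounded $f$ on $B_x$.
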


\begin{proof}
By Proposition~\ref{prop:G-covering} in Appendix~\ref{app:covering},
\[
\Cov\bigl(\varepsilon,\cG_Y,\|\cdot\|_{\infty,B_y}\bigr)
\le
\Cov\!\left(\frac{\varepsilon}{2L_{\mathrm{par}}},\Theta,\|\cdot\|_2\right)
\Cov\!\left(\frac{\varepsilon}{2},\cF_X,\|\cdot\|_{\infty,B_x}\right),
\]
where $L_{\mathrm{par}}$ depends only on $r,k,d_x,d_y$. The second factor is bounded by
$\exp(C\varepsilon^{-d_*/2})$ by Lemma~\ref{lem:semicolon-entropy}. The first factor contributes only a
finite-dimensional volumetric term of order $\log(C/\varepsilon)$ by Lemma~\ref{lem:volumetric-covering} in
Appendix~\ref{app:covering}. Since $\log(C/\varepsilon)\lesssim \varepsilon^{-d_*/2}$ for $0<\varepsilon\le 1$,
the claim follows.
\end{proof}

\begin{proposition}\label{prop:g-empirical}
There exists a constant $C=C(r,k,d_x,d_y)$ such that
\[
\E\sup_{g\in \cG_Y}|\Pair{\nu-\widehat\nu_n}{g}|
\le
C\,
\rho_n(d_*).
\]
\end{proposition}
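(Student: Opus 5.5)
The proof will mirror that of Proposition~\ref{prop:f-empirical}, with Lemma~\ref{lem:semicolon-entropy} replaced by the entropy bound of Proposition~\ref{prop:g-entropy}. Two properties of $\cG_Y$ are needed before Dudley's inequality can be applied: a uniform sup-norm bound, and measurability of the supremum.

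\textbf{Uniform boundedness.} For $g=\mathsf C_\theta f$ with $f\in\cF_X$ and $\theta\in\Theta$, we have
\[
|g(y)|\le \sup_{x\in B_x}|c_\theta(x,y)|+\|f\|_{L^\infty(B_x)}\le C_{\mathrm{cost}}+C_{\mathrm{ext}}=:C_{\cG},
\]
which depends only on $r,k,d_x,d_y$. Measurability of the supremum was settled just before Proposition~\ref{prop:f-empirical}: the supremum equals the supremum over a countable dense subset $\mathcal D_2\subset\cG_Y$.

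\textbf{Dudley bound.} Since $\nu$ and $\widehat\nu_n$ live on $B_y$, the sup norm on $B_y$ dominates the $L^2(\widehat\nu_n)$ norm. Dudley's entropy integral for uniformly bounded classes \cite{Dudley,vdVW} then gives
\[
\E\sup_{g\in\cG_Y}|\Pair{\nu-\widehat\nu_n}{g}|
\lesssim
\inf_{\alpha>0}\Bigl\{\alpha+\frac{1}{\sqrt n}\int_\alpha^{2C_{\cG}}\sqrt{\log\Cov(\varepsilon,\cG_Y,\|\cdot\|_{\infty,B_y})}\,d\varepsilon\Bigr\}.
\]
Proposition~\ref{prop:g-entropy} controls the entropy only for $0<\varepsilon\le1$. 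On $[1,2C_{\cG}]$, monotonicity of covering numbers in $\varepsilon$ gives $\log\Cov(\varepsilon,\cG_Y)\le \log\Cov(1,\cG_Y)\le C$, so this range contributes a term of order $n^{-1/2}$. On $[\alpha,1]$ the integrand is at most $C\varepsilon^{-d_*/4}$.

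\textbf{Case analysis.} This is identical to Proposition~\ref{prop:f-empirical}.
\begin{itemize}
\item If $d_*<4$, take $\alpha\to0$; the integral converges and the bound is $O(n^{-1/2})$.
\item If $d_*=4$, take $\alpha=n^{-1/2}$; the integral is of order $\log n$, giving $n^{-1/2}\log(en)$.
\item If $d_*>4$, the integral is of order $\alpha^{1-d_*/4}$. Balancing $\alpha$ against $n^{-1/2}\alpha^{1-d_*/4}$ yields $\alpha=n^{-2/d_*}$ and the rate $n^{-2/d_*}$.
\end{itemize}
In every case the bound is $C\rho_n(d_*)$.

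\textbf{Main obstacle.} The point needing care is not the computation but the dimension bookkeeping. $\cG_Y$ consists of functions on $B_y\subset\R^{d_y}$, where $d_y$ may exceed $d_*$. A naive semiconcavity argument on the $y$-side would give exponent $d_y/2$. The correct exponent $d_*/2$ comes entirely from Proposition~\ref{prop:g-entropy}, which transfers the entropy of $\cF_X$ (living in dimension $d_x=d_*$) through the $\Theta$-Lipschitz, sup-norm $1$-Lipschitz map $(\theta,f)\mapsto\mathsf C_\theta f$. So the proof should invoke Proposition~\ref{prop:g-entropy} rather than Lemma~\ref{lem:semicolon-entropy} applied directly on $B_y$. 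It should also verify that the $\varepsilon\le1$ restriction in that proposition only affects the harmless large-$\varepsilon$ range of the integral.
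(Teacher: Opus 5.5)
Your proposal is correct and follows the paper's proof step for step: the uniform bound $C_{\mathrm{cost}}+C_{\mathrm{ext}}$, Dudley's entropy integral driven by Proposition~\ref{prop:g-entropy} (not a direct $y$-side semiconcavity bound), and the same three-case analysis. Your separate treatment of the range $\varepsilon\in[1,2C_{\cG}]$ does the job of the additive $C C_{\cG}/\sqrt n$ term in the paper's display. Strictly, the absolute-value form of Dudley's bound also needs a single-function term of order $C_{\cG}/\sqrt n$ (or $\log(1+\Cov)$ in the integrand); this is harmless since $n^{-1/2}\le\rho_n(d_*)$.
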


\begin{proof}
Every $g\in \cG_Y$ is of the form $g=\mathsf C_\theta f$ with $\theta\in \Theta$ and $f\in \cF_X$. Since
\[
g(y)=\inf_{x\in B_x}\bigl(c_\theta(x,y)-f(x)\bigr),
\]
we have
\[
|g(y)|
\le
\sup_{(x,y)\in B_x\times B_y}|c_\theta(x,y)|+\|f\|_{L^\infty(B_x)}
\le
C_{\mathrm{cost}}+C_{\mathrm{ext}}
=:C_{\mathcal G}.
\]
Thus $\cG_Y$ is uniformly bounded, with $C_{\mathcal G}$ depending only on $r,k,d_x,d_y$.

\medskip

\noindent Applying Dudley's entropy integral inequality together with Proposition~\ref{prop:g-entropy} gives
\[
\E\sup_{g\in \cG_Y}|\Pair{\nu-\widehat\nu_n}{g}|
\le
\inf_{0<\alpha\le 1}
\left\{
\alpha+\frac{C}{\sqrt n}\int_\alpha^1 \varepsilon^{-d_*/4}\,d\varepsilon
\right\}
+
\frac{CC_{\mathcal G}}{\sqrt n}.
\]
The same case analysis as in Proposition~\ref{prop:f-empirical} gives the stated rate.
\end{proof}

\begin{proposition}[Transport empirical-process bound]\label{prop:transport-final}
There exists a constant $C=C(r,k,d_x,d_y)$ such that
\[
\E\Bigl|
\CGW_{r,k}(\mu,\nu)-\CGW_{r,k}(\widetilde\mu_n,\widetilde\nu_n)
\Bigr|
\le
C\,
\rho_n(d_*).
\]
\end{proposition}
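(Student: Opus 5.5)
This is a direct assembly of results already in place. The plan is to start from Proposition~\ref{prop:transport-reduction}, which gives
\[
\E\Bigl|\CGW_{r,k}(\mu,\nu)-\CGW_{r,k}(\widetilde\mu_n,\widetilde\nu_n)\Bigr|
\le 4\,\E\sup_{f\in\cF_X}|\Pair{\mu-\widehat\mu_n}{f}|
+4\,\E\sup_{g\in\cG_Y}|\Pair{\nu-\widehat\nu_n}{g}|+\frac{C}{\sqrt n}.
\]
This reduction is legitimate because the parameter set $\Theta$ and cost family $\{c_\theta\}$ were fixed uniformly via Corollary~\ref{cor:uniform-boxes} with $(\rho_x,\rho_y)=(2,2)$. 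That corollary covers both the normalized population pair and the centered empirical pair, since both are supported in $B_x\times B_y$.

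Next, I bound the two suprema separately. The first is at most $C\rho_n(d_*)$ by Proposition~\ref{prop:f-empirical}, where semiconcavity of $\cF_X$ on $B_x\subset\R^{d_x}$ with $d_x=d_*$ gives entropy $\varepsilon^{-d_*/2}$. The second is at most $C\rho_n(d_*)$ by Proposition~\ref{prop:g-empirical}. That bound relies on the covering estimate of Proposition~\ref{prop:g-entropy}, which transfers the $d_*$-dimensional entropy of $\cF_X$ to $\cG_Y\subset C(B_y)$ even though $d_y$ may be larger; this is where the lower dimension enters. Measurability of both suprema was handled via the countable dense subsets $\mathcal D_1,\mathcal D_2$.

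Finally, I absorb the parametric term. For every $d$ we have $n^{-1/2}\le\rho_n(d)$: if $d<4$ they are equal; if $d=4$ we use $\log(en)\ge1$; if $d>4$ we use $2/d<1/2$. Hence $C/\sqrt n\le C\rho_n(d_*)$, and summing the three contributions yields the claim with a constant depending only on $r,k,d_x,d_y$. Every constant involved ($\Lambda$, $L_{\mathrm{pot}}$, $C_{\mathrm{ext}}$, $C_{\mathcal G}$, $L_{\mathrm{par}}$) depends only on these parameters through the fixed compact set $\Theta$.

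There is no substantive obstacle at this stage, since all the work sits in Proposition~\ref{prop:g-entropy} and the Appendix covering bound. The one point I would double-check is the uniformity of $\Theta$ across the population and empirical pairs, which is exactly what Corollary~\ref{cor:uniform-boxes} provides.
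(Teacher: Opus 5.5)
Your proposal is correct and follows the paper's proof essentially verbatim: you combine Proposition~\ref{prop:transport-reduction} with Propositions~\ref{prop:f-empirical} and~\ref{prop:g-empirical}, then absorb the $C/\sqrt n$ centering term using $n^{-1/2}\le\rho_n(d_*)$. Your explicit case check of that inequality, and your remark that Corollary~\ref{cor:uniform-boxes} makes $\Theta$ uniform over the population and centered empirical pairs, are both accurate.
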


\begin{proof}
Combine Proposition~\ref{prop:transport-reduction} with Propositions~\ref{prop:f-empirical} and
\ref{prop:g-empirical}. The additional centering term from Proposition~\ref{prop:transport-reduction} is of
order $n^{-1/2}$, which is absorbed by the displayed rate for every value of $d_*$, because
\[
\frac1{\sqrt n}
\le
\rho_n(d_*)
\qquad
(n\ge 1).
\]
\end{proof}

\begin{proof}[Proof of Theorem~\ref{thm:main}]
Under the normalized assumptions \eqref{eq:normalized-hypotheses}, taking expectations in
Proposition~\ref{prop:centering-decomp} and using \eqref{eq:marginal-final} together with
Proposition~\ref{prop:transport-final} gives
\[
\E\Bigl|
\DGW_{r,k}(\mu,\nu)
-
\DGW_{r,k}(\widehat\mu_n,\widehat\nu_n)
\Bigr|
\le
C_{r,k,d_x,d_y}
\left[
\frac1{\sqrt n}
+
\rho_n(d)
\right].
\]
Applying this bound to the normalized pair $(\mu^\circ,\nu^\circ)$ and using \eqref{eq:rescaling-main}, we
obtain
\[
\E\Bigl|
\DGW_{r,k}(\mu,\nu)
-
\DGW_{r,k}(\widehat\mu_n,\widehat\nu_n)
\Bigr|
\le
C_{r,k,d_x,d_y}
(2R)^{4kr}
\left[
\frac1{\sqrt n}
+
\rho_n(d_\ast)
\right].
\]
Absorbing $2^{4kr}$ into the constant proves the theorem.
\end{proof}

\section{Lower bound}

We prove a lower bound for the case when $d_x \wedge d_y < 4$, and thus establish sharpness when $d_x \wedge d_y < 4$. The other cases are left for future work. 

\medskip

\begin{proposition}[Parametric lower bound]
\label{prop:parametric-lower}
Fix \(r,k\ge 1\), \(d_x,d_y\ge 1\), and \(R>0\). There exists a constant \(c>0\) such that, for all sufficiently large \(n\),
\[
\sup_{\substack{
\operatorname{diam}(\operatorname{supp}\mu)\le R\\
\operatorname{diam}(\operatorname{supp}\nu)\le R
}}
\mathbb E\left|
\DGW_{r,k}(\widehat\mu_n,\widehat\nu_n)
-
\DGW_{r,k}(\mu,\nu)
\right|
\ge
cR^{4kr}n^{-1/2}.
\]
Consequently, when \(d_x\wedge d_y<4\), the rate in Theorem~\ref{thm:main} is sharp.
\end{proposition}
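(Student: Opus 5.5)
Our plan is to exhibit a single explicit pair $(\mu,\nu)$ for which the empirical fluctuation of $\DGW_{r,k}$ is of order exactly $n^{-1/2}$. By Lemma~\ref{lem:translation-homogeneity} it suffices to treat $R=1$; the factor $R^{4kr}$ is then recovered by dilation.

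\textbf{Choice of the pair.} We take $\nu=\delta_0$ on $\R^{d_y}$. Every coupling is then $\mu\otimes\delta_0$, and
\[
\DGW_{r,k}(\mu,\delta_0)=\iint \|x-x'\|^{4kr}\,d\mu(x)\,d\mu(x').
\]
Since $\widehat\nu_n=\delta_0$ as well, the problem reduces to the fluctuation of a degree-two V-statistic. For $\mu$ we take the two-point law on the first coordinate axis, $\mu=p\,\delta_0+(1-p)\,\delta_{e_1}$ with $p\in(0,1)$, $p\neq 1/2$. Then
\[
\DGW_{r,k}(\mu,\delta_0)=2p(1-p)=:F(p),
\qquad
\DGW_{r,k}(\widehat\mu_n,\delta_0)=F(\widehat p_n),
\]
where $\widehat p_n$ is the empirical frequency of $0$.

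\textbf{Delta-method lower bound.} Write $F(\widehat p_n)-F(p)=F'(p)(\widehat p_n-p)-2(\widehat p_n-p)^2$, with $F'(p)=2-4p\neq0$. This gives
\[
\E|F(\widehat p_n)-F(p)|\ \ge\ |F'(p)|\,\E|\widehat p_n-p|-2\E(\widehat p_n-p)^2 .
\]
The second term equals $2p(1-p)/n$. For the first, we need a lower bound on $\E|\widehat p_n-p|$. One option is the CLT plus uniform integrability, since the second moments are bounded, which yields $\E|\widehat p_n-p|\sim\sqrt{2p(1-p)/(\pi n)}$. Alternatively, one can use the elementary inequality $\E|Z|\ge(\E Z^2)^{3/2}/(\E Z^4)^{1/2}$ together with $\E Z^4\le 3\sigma^4/n^2+\sigma^2/n^3$. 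Either route gives $\E|\widehat p_n-p|\ge c_p n^{-1/2}$ for $n$ large. Hence $\E|F(\widehat p_n)-F(p)|\ge c\,n^{-1/2}$ for all sufficiently large $n$.

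\textbf{Main obstacle.} The key step is making sure the linear term does not degenerate, which is the reason for choosing $p\neq1/2$. At $p=1/2$ we have $F'=0$, the fluctuation is only of order $n^{-1}$, and the argument would fail. The remaining checks are routine: the supports have diameter at most $1$, the sample sizes match the theorem's setup, and, after dilation by $R$, $\DGW$ scales by $R^{4kr}$, which gives $cR^{4kr}n^{-1/2}$.

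Sharpness for $d_\ast<4$ follows immediately, since Theorem~\ref{thm:main} gives the matching upper bound $R^{4kr}n^{-1/2}$ in that regime.
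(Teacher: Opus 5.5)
Your proposal is correct and follows the paper's proof essentially step for step. It uses the same choice $\nu=\delta_0$ with a two-point $\mu$ at a non-degenerate weight (the paper takes $p=1/4$), the same identity $\DGW_{r,k}=2p(1-p)R^{4kr}$, the same second-order Taylor expansion combined with the reverse triangle inequality, and the same CLT-plus-uniform-integrability lower bound on $\E|\widehat p_n-p|$. Your alternative H\"older/fourth-moment route, $\E|Z|\ge(\E Z^2)^{3/2}/(\E Z^4)^{1/2}$, is also valid, and it gives an explicit, non-asymptotic constant for that step.
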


\begin{proof}
Fix $R > 0$, set $p = 1/4$, and let
\(e_1\in\mathbb R^{d_x}\) be the first coordinate unit vector, and set
\[
\mu=(1-p)\delta_0+p\delta_{Re_1},
\qquad
\nu=\delta_0.
\]
Then
\[
\operatorname{diam}(\operatorname{supp}\mu)=R,
\qquad
\operatorname{diam}(\operatorname{supp}\nu)=0.
\]
Since \(\nu=\delta_0\), the only coupling between \(\mu\) and \(\nu\) is
\(\mu\otimes\delta_0\). Hence
\[
\DGW_{r,k}(\mu,\nu)
=
\iint \|x-x'\|^{4kr}\,d\mu(x)d\mu(x').
\]
If \(X,X'\sim\mu\) independently, then $\mathbb{P}(\|X-X'\|=R) = 2p(1-p)$. Therefore
\[
\DGW_{r,k}(\mu,\nu)
=
2p(1-p)R^{4kr}.
\]
Set
\[
g(t):=2t(1-t).
\]
Then
\[
\DGW_{r,k}(\mu,\nu)=g(p)R^{4kr}.
\]
Let
\[
\widehat p_n:=\widehat\mu_n(\{Re_1\}).
\]
Then \(n\widehat p_n\sim\operatorname{Bin}(n,p)\), and since
\(\widehat\nu_n=\delta_0\) almost surely,
\[
\DGW_{r,k}(\widehat\mu_n,\widehat\nu_n)
=
2\widehat p_n(1-\widehat p_n)R^{4kr}
=
g(\widehat p_n)R^{4kr}.
\]
Thus
\[
\mathbb E\left|
\DGW_{r,k}(\widehat\mu_n,\widehat\nu_n)
-
\DGW_{r,k}(\mu,\nu)
\right|
=
R^{4kr}\mathbb E|g(\widehat p_n)-g(p)|.
\]
Set \(\Delta_n:=\widehat p_n-p\). Taylor's expansion of $g(t)$ around $p$ gives:
\[
g(\widehat p_n)-g(p)
=
g'(p)\Delta_n-2\Delta_n^2,
\qquad
g'(p)=2(1-2p)\neq0.
\]
Hence
\[
\mathbb E|g(\widehat p_n)-g(p)|
\ge
|g'(p)|\,\mathbb E|\Delta_n|-2\mathbb E\Delta_n^2.
\]
We know that
\[
\mathbb E\Delta_n^2=\operatorname{Var}(\widehat p_n)=\frac{p(1-p)}{n}.
\]
Furthermore, by the central limit theorem and uniform integrability,
\[
\sqrt n\,\mathbb E|\widehat p_n-p|
\longrightarrow
\sqrt{\frac{2p(1-p)}{\pi}}.
\]
Therefore, for all sufficiently large \(n\),
\[
\mathbb E|\Delta_n|\ge c_p n^{-1/2}
\]
for some \(c_p>0\). Thus,
\[
\mathbb E\left|
\DGW_{r,k}(\widehat\mu_n,\widehat\nu_n)
-
\DGW_{r,k}(\mu,\nu)
\right|
= R^{4kr}\mathbb E|g(\widehat p_n)-g(p)|
\ge c'_p R^{4kr} n^{-1/2}.
\]
Thus, we get the lower bound. If \(d:=d_x\wedge d_y<4\), then the upper bound from Theorem~\ref{thm:main} gives
\[
\sup_{\mu,\nu}
\mathbb E\left|
\DGW_{r,k}(\widehat\mu_n,\widehat\nu_n)
-
\DGW_{r,k}(\mu,\nu)
\right|
\lesssim
R^{4kr}n^{-2/(d\vee4)}
=
R^{4kr}n^{-1/2}.
\]
This establishes sharpness for \(d_x\wedge d_y<4\).
\end{proof}

\section*{Acknowledgments}

The author is grateful to Professor Oanh Nguyen for suggesting the problems studied in this paper
and for many helpful discussions.

\appendix

\section{\texorpdfstring{Generalized duality for the case $(2r,2k)$}{Generalized duality for the case (2r,2k)}}\label{app:duality}

This appendix proves Theorem~\ref{thm:duality-main}. The proof below follows the generalized duality argument of Zhang, Goldfeld,
Mroueh, and Sriperumbudur \cite[Appendix G]{ZhangEtAl}.
Their appendix treats the \((2,2k)\) case and notes that the same polynomial
linearization method extends to even \(p\). We record the \((2r,2k)\) version needed
here, with the relevant bookkeeping and compactness details made explicit.

\begin{proposition}[Polynomial decomposition]\label{prop:poly-decomp}
There exist an integer $J\ge 0$, an integer $\ell\in \{0,\dots,J\}$, and polynomials
\[
P_1,\dots,P_J:\R^{d_x}\times \R^{d_y}\to \R,
\]
each of degree at most $4kr$, depending only on $r,k,d_x,d_y$, such that
\[
\DGW_{r,k}(\mu,\nu)
=
\MGW_{r,k}(\mu,\nu)
+
\inf_{\pi\in \Pi(\mu,\nu)}
\left[
\sum_{i=1}^{\ell}\left(\int P_i\,d\pi\right)^2
-
\sum_{i=\ell+1}^{J}\left(\int P_i\,d\pi\right)^2
\right].
\]
Moreover, $\MGW_{r,k}(\mu,\nu)$ depends only on moments of $\mu$ and $\nu$ of order at most $4kr$.
\end{proposition}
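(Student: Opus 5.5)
The plan is to expand the kernel, express every coupling-dependent contribution as a symmetric bilinear form in a finite vector of mixed moments, and then diagonalize that form.

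\textbf{Step 1 (expansion).} Start from the expansion already recorded before Definition~\ref{def:marginal-only}. By the binomial theorem in $s$ and the multinomial expansion of
\[
\|x-x'\|^{2k}=\Bigl(\|x\|^2-2\langle x,x'\rangle+\|x'\|^2\Bigr)^{k},
\]
and similarly in $y$, the integrand $(K_X-K_Y)^{2r}$ becomes a finite sum
\[
\sum c_{\alpha\gamma\beta\delta}\,x^\alpha y^\gamma (x')^\beta (y')^\delta .
\]
The coefficients depend only on $r,k,d_x,d_y$, and $|\alpha|+|\gamma|+|\beta|+|\delta|\le 4kr$. In particular $|\alpha|+|\gamma|\le 4kr$, since each of $x$ and $x'$ appears with total degree at most $2k$ per factor of $K_X$. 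Integrating against $\pi\otimes\pi$ gives
\[
\sum c_{\alpha\gamma\beta\delta}\,M_{\alpha,\gamma}(\pi)M_{\beta,\delta}(\pi),
\]
which is finite by the $4kr$-moment assumption.

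\textbf{Step 2 (isolate the marginal part).} The integrand is symmetric under the swap $(x,y)\leftrightarrow(x',y')$, so we may symmetrize and assume $c_{\alpha\gamma\beta\delta}=c_{\beta\delta\alpha\gamma}$. Separate out the marginal-only terms of Definition~\ref{def:marginal-only}, whose sum is $\MGW_{r,k}$ and involves only marginal moments of order at most $4kr$. What remains is $\QGW_{r,k}(\pi)$. In each remaining term at least one factor is a genuinely mixed moment $M_{\alpha,\gamma}$ with $\alpha\ne0$ and $\gamma\ne0$.

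\textbf{Step 3 (quadratic form).} Index a vector $m(\pi)$ by all moments of degree at most $4kr$:
\begin{itemize}[leftmargin=2em]
\item the mixed moments $M_{\alpha,\gamma}(\pi)$ with $\alpha,\gamma\ne0$;
\item the pure moments $M_{\alpha,0}$ and $M_{0,\gamma}$, which are constant over $\Pi(\mu,\nu)$.
\end{itemize}
Then
\[
\QGW_{r,k}(\pi)=m(\pi)^\top A\,m(\pi),
\]
with $A$ a symmetric real matrix depending only on $(r,k,d_x,d_y)$. Its entries on pure$\times$pure index pairs are set to zero, since those products were moved into $\MGW$. Diagonalize $A=\sum_i\lambda_i w_iw_i^\top$ and put
\[
P_i:=\sqrt{|\lambda_i|}\sum_{(\alpha,\gamma)}(w_i)_{\alpha\gamma}\,x^\alpha y^\gamma .
\]
Then $\int P_i\,d\pi=\sqrt{|\lambda_i|}\,w_i^\top m(\pi)$. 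Order the $P_i$ so that the positive eigenvalues come first; this gives the signed sum of squares with $\ell$ the number of positive eigenvalues. The infimum over $\Pi(\mu,\nu)$ then yields $\CGW_{r,k}$, and the decomposition \eqref{eq:marginal-coupling-decomp} gives the stated identity. Each $P_i$ has degree at most $4kr$ and depends only on $r,k,d_x,d_y$.

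\textbf{Main obstacle.} The point to watch is that the pure moments included in $m(\pi)$ do not depend on $\pi$. Consequently the $P_i$ are genuinely functions of $(x,y)$ with $\pi$-independent coefficients, so the formula is legitimate even though some $P_i$ carry pure-moment components. We do not need those components to be absorbed into $\MGW$; the identity holds exactly as an algebraic rewriting of $\QGW_{r,k}(\pi)$ for every $\pi$.

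A tidier alternative splits off the pure moments as linear terms and completes squares, but that produces $\mu,\nu$-dependent polynomials, which the proposition forbids. I would therefore keep the fixed-matrix diagonalization, checking only that the index set is finite and that the degree bound holds.
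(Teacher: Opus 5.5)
Your proposal is correct and follows the paper's proof essentially step for step. Both expand the kernel and move the marginal-only products into $\MGW_{r,k}$. Both then write $\QGW_{r,k}(\pi)$ as a fixed symmetric quadratic form in a finite vector of moments that includes the pure moments, as in the paper's index set $\mathfrak J$. Finally, both diagonalize to obtain polynomials $P_i$ that depend neither on $\pi$ nor on $(\mu,\nu)$.

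The one point worth stating explicitly is that your index set must contain the degree-zero moment $M_{0,0}(\pi)=1$. Terms such as $\|x\|^{2ks}\|y\|^{2k(2r-s)}$ with $1\le s\le 2r-1$ contribute expressions linear in a mixed moment, and these cannot be written as a quadratic form without a constant component. The paper adds $(0,0)$ to $\mathfrak J$ for exactly this reason, and your phrase ``all moments of degree at most $4kr$'' already includes it.
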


\begin{proof}
Set
\[
K_X(x,x'):=\|x-x'\|^{2k},
\qquad
K_Y(y,y'):=\|y-y'\|^{2k}.
\]
By the binomial theorem,
\[
\bigl(K_X(x,x')-K_Y(y,y')\bigr)^{2r}
=
\sum_{s=0}^{2r}(-1)^s\binom{2r}{s}\bigl(K_X(x,x')\bigr)^s\bigl(K_Y(y,y')\bigr)^{2r-s}.
\]
Expanding each factor by the multinomial theorem gives
\[
\bigl(K_X(x,x')\bigr)^s
=
\sum_{|\alpha|+|\beta|=2ks}
p_{\alpha\beta}^{(s)}x^\alpha(x')^\beta,
\qquad
\bigl(K_Y(y,y')\bigr)^{2r-s}
=
\sum_{|\gamma|+|\delta|=2k(2r-s)}
q_{\gamma\delta}^{(s)}y^\gamma(y')^\delta,
\]
with coefficients depending only on $r,k,d_x,d_y$. Therefore
\[
\DGW_{r,k}(\mu,\nu)
=
\inf_{\pi\in \Pi(\mu,\nu)}
\sum_{s=0}^{2r}
(-1)^s\binom{2r}{s}
\!\!\!\!\!\!
\sum_{\substack{|\alpha|+|\beta|=2ks\\ |\gamma|+|\delta|=2k(2r-s)}}
\!\!\!\!\!\!
p_{\alpha\beta}^{(s)}q_{\gamma\delta}^{(s)}
\left(\int x^\alpha y^\gamma\,d\pi\right)
\left(\int x^\beta y^\delta\,d\pi\right).
\]
Let $\MGW_{r,k}(\mu,\nu)$ collect all marginal-only terms, and let $\QGW_{r,k}(\pi)$ denote the rest.
Then
\[
\DGW_{r,k}(\mu,\nu)
=
\MGW_{r,k}(\mu,\nu)
+
\inf_{\pi\in \Pi(\mu,\nu)}\QGW_{r,k}(\pi).
\]
Let $\mathfrak J$ be the finite set consisting of $(0,0)$ together with every pair $(\alpha,\gamma)$ that appears in
some factor $\int x^\alpha y^\gamma\,d\pi$ occurring in $\QGW_{r,k}(\pi)$. For $j=(\alpha,\gamma)\in \mathfrak J$,
define
\[
h_j(x,y):=x^\alpha y^\gamma,
\qquad
u_j(\pi):=\int h_j(x,y)\,d\pi(x,y).
\]
Because $(0,0)\in \mathfrak J$, the constant function $1$ is included among the $h_j$, so affine terms in the mixed
moments are absorbed into the same quadratic form. Hence there exists a real matrix $C_0$ such that
\[
\QGW_{r,k}(\pi)=u(\pi)^\top C_0u(\pi),
\qquad
u(\pi):=(u_j(\pi))_{j\in \mathfrak J}.
\]
Replacing $C_0$ with its symmetrization $C:=(C_0+C_0^\top)/2$, we may assume that $C$ is symmetric and
still have
\[
\QGW_{r,k}(\pi)=u(\pi)^\top C u(\pi).
\]
Let $m_0:=|\mathfrak J|$. Since $C$ is symmetric, there exist an orthogonal matrix $U$ and real eigenvalues
$\lambda_1,\dots,\lambda_{m_0}$ such that
\[
C=U^\top \mathrm{diag}(\lambda_1,\dots,\lambda_{m_0})U.
\]
Discarding the zero eigenvalues and reordering if necessary, we may assume
\[
\lambda_1,\dots,\lambda_\ell>0,
\qquad
\lambda_{\ell+1},\dots,\lambda_J<0
\]
for some $J\le m_0$. Define
\[
P_i(x,y):=
\sqrt{\lambda_i}
\sum_{j\in \mathfrak J}U_{ij}h_j(x,y),
\qquad
1\le i\le \ell,
\]
and
\[
P_i(x,y):=
\sqrt{-\lambda_i}
\sum_{j\in \mathfrak J}U_{ij}h_j(x,y),
\qquad
\ell+1\le i\le J.
\]
Each $P_i$ is a polynomial of degree at most $4kr$. Moreover, for every $\pi\in \Pi(\mu,\nu)$,
\[
\QGW_{r,k}(\pi)
=
\sum_{i=1}^{\ell}\left(\int P_i\,d\pi\right)^2
-
\sum_{i=\ell+1}^{J}\left(\int P_i\,d\pi\right)^2.
\]
Substituting this identity into the preceding decomposition proves the claim.
\end{proof}

\begin{proposition}[Compact dual representation]\label{prop:compact-dual}
Let $P_1,\dots,P_J$ and $\ell$ be as in Proposition~\ref{prop:poly-decomp}. For
\[
u\in \R^\ell,
\qquad
v\in \R^{J-\ell},
\]
define
\[
c_{u,v}(x,y)
=
\sum_{i=1}^{\ell}u_i P_i(x,y)
-
\sum_{j=1}^{J-\ell}v_j P_{\ell+j}(x,y).
\]
For each $i=1,\dots,J$, let
\[
\kappa_i
:=
\sup_{\pi\in \Pi(\mu,\nu)}
\left|
\int P_i\,d\pi
\right|,
\]
and define
\[
\Theta_+
:=
\prod_{i=1}^{\ell}\left[-\frac{\kappa_i}{2},\frac{\kappa_i}{2}\right],
\qquad
\Theta_-
:=
\prod_{j=1}^{J-\ell}\left[-\frac{\kappa_{\ell+j}}{2},\frac{\kappa_{\ell+j}}{2}\right].
\]
Then $\kappa_i<\infty$ for every $i$, and
\[
\DGW_{r,k}(\mu,\nu)
=
\MGW_{r,k}(\mu,\nu)
+
4\inf_{v\in \Theta_-}\sup_{u\in \Theta_+}
\Bigl\{
-\|u\|^2+\|v\|^2+\OT_{c_{u,v}}(\mu,\nu)
\Bigr\}.
\]
\end{proposition}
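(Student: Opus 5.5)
The plan is to linearize each square by the scalar variational identities
\[
a^2=\sup_{t\in\R}\{4ta-4t^2\},\qquad -a^2=\inf_{t\in\R}\{-4ta+4t^2\}.
\]
The optimizer is $t=a/2$. Consequently, if $|a|\le\kappa$, then $t$ may be restricted to $[-\kappa/2,\kappa/2]$ without changing the value. Finiteness of $\kappa_i$ is immediate from $\mu,\nu\in\cP_{4kr}$: each $P_i$ has degree at most $4kr$, so $|P_i(x,y)|\le C(1+\|x\|^{4kr}+\|y\|^{4kr})$. Hence $|\int P_i\,d\pi|\le C(1+M_{4kr}(\mu)+M_{4kr}(\nu))$ uniformly over $\pi\in\Pi(\mu,\nu)$.

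Write $a_i(\pi)=\int P_i\,d\pi$. By Proposition~\ref{prop:poly-decomp} and the identities above, for each fixed $\pi$,
\[
\QGW_{r,k}(\pi)=\sup_{u\in\Theta_+}\inf_{v\in\Theta_-}\Bigl\{4\sum_{i\le\ell}(u_ia_i-u_i^2)-4\sum_{j}(v_ja_{\ell+j}-v_j^2)\Bigr\}
=4\sup_{u}\inf_{v}\Bigl\{-\|u\|^2+\|v\|^2+\int c_{u,v}\,d\pi\Bigr\}.
\]
Here the sup and inf decouple, since the $u$-part and $v$-part are separate sums. Because of this decoupling, the order $\sup_u\inf_v$ versus $\inf_v\sup_u$ does not matter at fixed $\pi$. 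We then need to move $\inf_\pi$ inside.

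The target is
\[
\inf_\pi\inf_v\sup_u\{\cdots\}=\inf_v\inf_\pi\sup_u\{\cdots\},
\]
and the inner task is to swap $\inf_\pi\sup_u$ into $\sup_u\inf_\pi$. Once that swap is made, $\inf_\pi\int c_{u,v}\,d\pi=\OT_{c_{u,v}}(\mu,\nu)$ and the formula follows. The swap is the main obstacle. I would apply Sion's minimax theorem (or Ky Fan's) to $F(\pi,u)=-\|u\|^2+\|v\|^2+\int c_{u,v}\,d\pi$ with $v$ fixed, and check its hypotheses as follows.
\begin{itemize}
\item $\Theta_+$ is a compact convex box.
\item $\Pi(\mu,\nu)$ is convex and weakly compact, since it is tight.
\item $F$ is linear, hence convex, in $\pi$.
\item $F$ is strictly concave and continuous in $u$.
\item The remaining hypothesis is lower semicontinuity of $\pi\mapsto\int P_i\,d\pi$. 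The $P_i$ are unbounded polynomials, but every $\pi\in\Pi(\mu,\nu)$ has the fixed marginals. Hence $|P_i|\le C(1+\|x\|^{4kr}+\|y\|^{4kr})$ is uniformly integrable over $\Pi(\mu,\nu)$, and $\pi\mapsto\int P_i\,d\pi$ is weakly continuous on $\Pi(\mu,\nu)$.
\end{itemize}
Sion then gives $\inf_\pi\sup_u F=\sup_u\inf_\pi F$ for each $v$. It requires compactness on only one side, here $\Pi(\mu,\nu)$ or $\Theta_+$.

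Combining the pieces gives
\[
\CGW_{r,k}(\mu,\nu)=\inf_\pi\QGW_{r,k}(\pi)=4\inf_{v\in\Theta_-}\sup_{u\in\Theta_+}\{-\|u\|^2+\|v\|^2+\OT_{c_{u,v}}(\mu,\nu)\}.
\]
Adding $\MGW_{r,k}$ via Proposition~\ref{prop:poly-decomp} yields the statement. One check is needed on the restriction to the boxes: for fixed $\pi$ the unrestricted optimizers $u_i=a_i(\pi)/2$ and $v_j=a_{\ell+j}(\pi)/2$ lie in $\Theta_\pm$ by the definition of $\kappa_i$. So restricting to the boxes changes nothing at fixed $\pi$, and the minimax step is then carried out entirely on the boxes.
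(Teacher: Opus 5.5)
Your proposal is correct and follows essentially the same route as the paper. You linearize each square on boxes of half-width $\kappa_i/2$, commute the two infima, apply Sion's theorem at fixed $v$ using weak compactness of $\Pi(\mu,\nu)$ and weak continuity of $\pi\mapsto\int P_i\,d\pi$ (from the uniform tail bound given by the fixed marginals), and then identify $\inf_\pi\int c_{u,v}\,d\pi$ with $\OT_{c_{u,v}}(\mu,\nu)$. The only difference is cosmetic: the paper applies Sion on the finite-dimensional compact convex moment image $\mathscr M=\{(\int P_i\,d\pi)_{i\le J}:\pi\in\Pi(\mu,\nu)\}\subset\R^J$ rather than on $\Pi(\mu,\nu)$ itself.
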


\begin{proof}
For $\pi\in \Pi(\mu,\nu)$, set
\[
m_i(\pi):=\int P_i\,d\pi,
\qquad
i=1,\dots,J.
\]
By Proposition~\ref{prop:poly-decomp},
\[
\DGW_{r,k}(\mu,\nu)
=
\MGW_{r,k}(\mu,\nu)
+
\inf_{\pi\in \Pi(\mu,\nu)}
\left[
\sum_{i=1}^{\ell}m_i(\pi)^2
-
\sum_{j=1}^{J-\ell}m_{\ell+j}(\pi)^2
\right].
\]
Since each $P_i$ is a polynomial of degree at most $4kr$, there exists $C_i>0$ such that
\[
|P_i(x,y)|
\le
C_i\bigl(1+\|x\|^{4kr}+\|y\|^{4kr}\bigr)
\qquad
\text{for all }(x,y)\in \R^{d_x}\times \R^{d_y}.
\]
Hence, for every $\pi\in \Pi(\mu,\nu)$,
\[
\left|
\int P_i\,d\pi
\right|
\le
C_i
\left(
1+\int \|x\|^{4kr}\,d\mu(x)+\int \|y\|^{4kr}\,d\nu(y)
\right),
\]
so $\kappa_i<\infty$.

\medskip

\noindent Define
\[
Q(m)
:=
\sum_{i=1}^{\ell}m_i^2-\sum_{j=1}^{J-\ell}m_{\ell+j}^2,
\qquad
m=(m_1,\dots,m_J)\in \R^J.
\]
Let
\[
\mathscr M
:=
\Bigl\{
(m_1(\pi),\dots,m_J(\pi)):\pi\in \Pi(\mu,\nu)
\Bigr\}
\subset \R^J.
\]
Then
\[
\DGW_{r,k}(\mu,\nu)
=
\MGW_{r,k}(\mu,\nu)+\inf_{m\in \mathscr M}Q(m).
\]
We claim that $\mathscr M$ is compact and convex. Convexity is follows from the convexity of $\Pi(\mu,\nu)$ and the
linearity of the moment map. For compactness, note first that $\Pi(\mu,\nu)$ is weakly compact: it is tight
because $\mu$ and $\nu$ are tight, and it is weakly closed because weak limits preserve the marginals. It
therefore suffices to show that each map
\[
\pi\longmapsto \int P_i\,d\pi
\]
is continuous on $\Pi(\mu,\nu)$.

\medskip

\noindent Fix $i$. Choose a continuous cutoff $\eta_R:\R^{d_x}\times \R^{d_y}\to [0,1]$ such that $\eta_R=1$ on
$B_{d_x}(0,R)\times B_{d_y}(0,R)$ and $\eta_R=0$ outside
$B_{d_x}(0,2R)\times B_{d_y}(0,2R)$, and set
\[
P_i^{(R)}:=P_i\eta_R.
\]
If $\pi_n\Rightarrow \pi$, then $P_i^{(R)}$ is bounded and continuous, so
\[
\int P_i^{(R)}\,d\pi_n \longrightarrow \int P_i^{(R)}\,d\pi.
\]

On the other hand, using the polynomial growth bound and the fact that every
\(\sigma\in\Pi(\mu,\nu)\) has marginals \(\mu,\nu\), we get the following uniform
tail estimate. Set \(p:=4kr\),
\[
A_R:=\{x:\|x\|>R\}, \qquad B_R:=\{y:\|y\|>R\}.
\]
Since \(P_i\) has degree at most \(p\), there exists \(C_i>0\) such that
\[
|P_i(x,y)|\le C_i\bigl(1+\|x\|^p+\|y\|^p\bigr).
\]
Moreover, for \(R\ge 1\),
\[
\bigl(1+\|x\|^p+\|y\|^p\bigr)\mathbf 1_{A_R\cup B_R}
\le
2(1+\|x\|^p)\mathbf 1_{A_R}
+
2(1+\|y\|^p)\mathbf 1_{B_R}.
\]
Therefore
\[
\begin{aligned}
\sup_{\sigma\in\Pi(\mu,\nu)}
\int |P_i-P_i^{(R)}|\,d\sigma
&\le
C_i
\sup_{\sigma\in\Pi(\mu,\nu)}
\int_{A_R\cup B_R}
\bigl(1+\|x\|^p+\|y\|^p\bigr)\,d\sigma(x,y)
\\
&\le
2C_i
\left[
\int_{A_R}(1+\|x\|^p)\,d\mu(x)
+
\int_{B_R}(1+\|y\|^p)\,d\nu(y)
\right].
\end{aligned}
\]
The right-hand side tends to \(0\) as \(R\to\infty\), since
\(\mu,\nu\in\mathcal P_p\). Now suppose \(\pi_n\Rightarrow \pi\) weakly in \(\Pi(\mu,\nu)\). For every fixed
\(R\), the function \(P_i^{(R)}\) is bounded and continuous, hence
\[
\int P_i^{(R)}\,d\pi_n \to \int P_i^{(R)}\,d\pi.
\]
Therefore
\[
\begin{aligned}
\limsup_{n\to\infty}
\left|
\int P_i\,d\pi_n-\int P_i\,d\pi
\right|
&\le
\limsup_{n\to\infty}
\left|
\int (P_i-P_i^{(R)})\,d\pi_n
\right|
\\
&\qquad
+
\left|
\int (P_i-P_i^{(R)})\,d\pi
\right|
\\
&\le
2\sup_{\sigma\in\Pi(\mu,\nu)}
\int |P_i-P_i^{(R)}|\,d\sigma.
\end{aligned}
\]
Letting \(R\to\infty\) proves
\[
\int P_i\,d\pi_n \to \int P_i\,d\pi.
\]
Thus each coordinate of the moment map is continuous on \(\Pi(\mu,\nu)\), so
\(\mathscr M\) is compact.

\medskip

\noindent Now define, for $m\in \mathscr M$, $u\in \Theta_+$, and $v\in \Theta_-$,
\[
\Psi(m,u,v)
:=
-\|u\|^2+\|v\|^2
+
\sum_{i=1}^{\ell}u_i m_i
-
\sum_{j=1}^{J-\ell}v_j m_{\ell+j}.
\]
For each fixed $m\in \mathscr M$, the optimization is coordinate-wise:
\[
\Psi(m,u,v)
=
\sum_{i=1}^{\ell}\bigl(-u_i^2+u_i m_i\bigr)
+
\sum_{j=1}^{J-\ell}\bigl(v_j^2-v_j m_{\ell+j}\bigr).
\]
Since $|m_i|\le \kappa_i$ on $\mathscr M$, the maximizer of $t\mapsto -t^2+t\,m_i$ is $t=m_i/2\in [-\kappa_i/2,\kappa_i/2]$, and
the minimizer of $t\mapsto t^2-t\,m_{\ell+j}$ is $t=m_{\ell+j}/2\in [-\kappa_{\ell+j}/2,\kappa_{\ell+j}/2]$. Hence
\[
\sup_{u\in \Theta_+}\inf_{v\in \Theta_-}\Psi(m,u,v)
=
\inf_{v\in \Theta_-}\sup_{u\in \Theta_+}\Psi(m,u,v)
=
\frac14\,Q(m).
\]
Therefore
\[
\DGW_{r,k}(\mu,\nu)
=
\MGW_{r,k}(\mu,\nu)
+
4\inf_{m\in \mathscr M}\inf_{v\in \Theta_-}\sup_{u\in \Theta_+}\Psi(m,u,v).
\]
Since the two infima commute,
\[
\DGW_{r,k}(\mu,\nu)
=
\MGW_{r,k}(\mu,\nu)
+
4\inf_{v\in \Theta_-}\inf_{m\in \mathscr M}\sup_{u\in \Theta_+}\Psi(m,u,v).
\]
Fix $v\in \Theta_-$. The function
\[
(m,u)\longmapsto \Psi(m,u,v)
\]
is continuous, affine in $m$, and concave in $u$. Because $\mathscr M$ and $\Theta_+$ are compact and convex, Sion's
minimax theorem \cite{Sion} gives
\[
\inf_{m\in \mathscr M}\sup_{u\in \Theta_+}\Psi(m,u,v)
=
\sup_{u\in \Theta_+}\inf_{m\in \mathscr M}\Psi(m,u,v).
\]
Hence
\[
\DGW_{r,k}(\mu,\nu)
=
\MGW_{r,k}(\mu,\nu)
+
4\inf_{v\in \Theta_-}\sup_{u\in \Theta_+}\inf_{m\in \mathscr M}\Psi(m,u,v).
\]
Finally, because $\mathscr M$ is the image of $\Pi(\mu,\nu)$ under the moment map,
\[
\inf_{m\in \mathscr M}\Psi(m,u,v)
=
-\|u\|^2+\|v\|^2+\inf_{\pi\in \Pi(\mu,\nu)}\int c_{u,v}\,d\pi
=
-\|u\|^2+\|v\|^2+\OT_{c_{u,v}}(\mu,\nu).
\]
Therefore, 
\[
\DGW_{r,k}(\mu,\nu)
=
\MGW_{r,k}(\mu,\nu)
+
4\inf_{v\in \Theta_-}\sup_{u\in \Theta_+}
\Bigl\{
-\|u\|^2+\|v\|^2+\OT_{c_{u,v}}(\mu,\nu)
\Bigr\}.
\]
\end{proof}

\bigskip

\begin{proof}[Proof of Theorem~\ref{thm:duality-main}]
Proposition~\ref{prop:poly-decomp} gives the polynomial decomposition of the coupling term, and
Proposition~\ref{prop:compact-dual} turns that decomposition into the compact dual formula. The displayed
formula for $\CGW_{r,k}(\mu,\nu)$ is the second display in Proposition~\ref{prop:compact-dual}.
\end{proof}

\begin{proof}[Proof of Corollary~\ref{cor:uniform-boxes}]
Let $P_1,\dots,P_J$ and $\ell$ be as in Proposition~\ref{prop:poly-decomp}. For each $i=1,\dots,J$, define
\[
C_i^{\mathrm{basis}}
:=
\sup\Bigl\{
|P_i(x,y)|:
x\in B_{d_x}(0,\rho_x),\ y\in B_{d_y}(0,\rho_y)
\Bigr\}.
\]
These numbers are finite because each $P_i$ is continuous on the compact set
\[
B_{d_x}(0,\rho_x)\times B_{d_y}(0,\rho_y),
\]
and they depend only on $r,k,d_x,d_y,\rho_x,\rho_y$.

\medskip

\noindent Now set
\[
\widetilde\Theta_+
:=
\prod_{i=1}^{\ell}
\left[-\frac{C_i^{\mathrm{basis}}}{2},\frac{C_i^{\mathrm{basis}}}{2}\right],
\qquad
\widetilde\Theta_-
:=
\prod_{j=1}^{J-\ell}
\left[-\frac{C_{\ell+j}^{\mathrm{basis}}}{2},\frac{C_{\ell+j}^{\mathrm{basis}}}{2}\right].
\]
These are compact sets depending only on $r,k,d_x,d_y,\rho_x,\rho_y$. Let $\mu,\nu$ satisfy
\[
\supp(\mu)\subset B_{d_x}(0,\rho_x),
\qquad
\supp(\nu)\subset B_{d_y}(0,\rho_y).
\]
If $\pi\in \Pi(\mu,\nu)$, then $\pi$ is concentrated on
\[
B_{d_x}(0,\rho_x)\times B_{d_y}(0,\rho_y),
\]
so for every $i=1,\dots,J$,
\[
\left|\int P_i\,d\pi\right|
\le
\int |P_i|\,d\pi
\le
C_i^{\mathrm{basis}}.
\]
Set
\[
m_i(\pi):=\int P_i\,d\pi,
\]
We therefore have
\[
|m_i(\pi)|\le C_i^{\mathrm{basis}}
\qquad
\text{for all }i=1,\dots,J.
\]
For fixed $\pi$, define
\[
\Phi_\pi(u,v)
:=
-\|u\|^2+\|v\|^2+\int c_{u,v}\,d\pi.
\]
As in the proof of Proposition~\ref{prop:compact-dual},
\[
\Phi_\pi(u,v)
=
\sum_{i=1}^{\ell}\bigl(-u_i^2+u_i m_i(\pi)\bigr)
+
\sum_{j=1}^{J-\ell}\bigl(v_j^2-v_j m_{\ell+j}(\pi)\bigr).
\]
Hence the coordinate-wise maximizer in the variable $u_i$ is $m_i(\pi)/2$, and the coordinate-wise minimizer
in the variable $v_j$ is $m_{\ell+j}(\pi)/2$. Because
\[
|m_i(\pi)|\le C_i^{\mathrm{basis}},
\]
all of these optimizers belong to $\widetilde\Theta_+$ and $\widetilde\Theta_-$. Therefore, for every
$\pi\in \Pi(\mu,\nu)$,
\[
\sup_{u\in \widetilde\Theta_+}\inf_{v\in \widetilde\Theta_-}\Phi_\pi(u,v)
=
\inf_{v\in \widetilde\Theta_-}\sup_{u\in \widetilde\Theta_+}\Phi_\pi(u,v)
=
\frac14
\left(
\sum_{i=1}^{\ell}m_i(\pi)^2-\sum_{j=1}^{J-\ell}m_{\ell+j}(\pi)^2
\right).
\]
Repeating the remainder of the proof of Proposition~\ref{prop:compact-dual} with $\widetilde\Theta_+$ and
$\widetilde\Theta_-$ in place of the measure-dependent boxes,
\[
\DGW_{r,k}(\mu,\nu)
=
\MGW_{r,k}(\mu,\nu)
+
4\inf_{v\in \widetilde\Theta_-}\sup_{u\in \widetilde\Theta_+}
\Bigl\{
-\|u\|^2+\|v\|^2+\OT_{c_{u,v}}(\mu,\nu)
\Bigr\}.
\]
Thus the compact sets in Theorem~\ref{thm:duality-main} may be chosen to depend only on
$r,k,d_x,d_y,\rho_x,\rho_y$.
\end{proof}

\section{\texorpdfstring{Polynomial bookkeeping for $\MGW_{r,k}$}{Polynomial bookkeeping for S1}}\label{app:marginal}

This appendix collects some algebraic facts about the marginal term $\MGW_{r,k}$.

\begin{proof}[Proof of Proposition~\ref{prop:marginal-structure}]
Fix $s\in \{0,\dots,2r\}$. In the multinomial expansion of
\[
\|x-x'\|^{2ks}\|y-y'\|^{2k(2r-s)},
\]
every term has the form
\[
p_{\alpha\beta}^{(s)}q_{\gamma\delta}^{(s)}
\left(\int x^\alpha y^\gamma\,d\pi\right)
\left(\int x^\beta y^\delta\,d\pi\right),
\]
with
\[
|\alpha|+|\beta|=2ks,
\qquad
|\gamma|+|\delta|=2k(2r-s).
\]
If the term is marginal-only, then each factor depends only on one marginal. Therefore each factor is one of
\[
M^X_\alpha(\mu),
\qquad
M^X_\beta(\mu),
\qquad
M^Y_\gamma(\nu),
\qquad
M^Y_\delta(\nu),
\]
and the corresponding monomial is one of
\[
M^X_\alpha(\mu)M^X_\beta(\mu),
\qquad
M^Y_\gamma(\nu)M^Y_\delta(\nu),
\qquad
M^X_\alpha(\mu)M^Y_\delta(\nu),
\qquad
M^Y_\gamma(\nu)M^X_\beta(\mu),
\]
or a degenerate case with one factor equal to $1$. Each individual moment has order at most $4kr$, and each
monomial contains at most two such moments. Summing over the finite index set of all marginal-only terms
proves the claim.
\end{proof}

\begin{proof}[Proof of Proposition~\ref{prop:marginal-emp-vs-centered}]
Assume the normalized hypotheses~(\ref{eq:normalized-hypotheses}). By Proposition~\ref{prop:marginal-structure}, it suffices to control the centering error for a single marginal moment and for a product of two marginal
moments.

\smallskip

For a multi-index $\alpha\in\mathbb N_0^{d_x}$, by multinomial theorem,
\[
M^X_\alpha(\widetilde\mu_n)
= \frac1n\sum_{i=1}^n (X_i-\overline X_n)^\alpha
= \sum_{\beta\le \alpha} \binom{\alpha}{\beta}(-\overline X_n)^{\alpha-\beta}M^X_\beta(\widehat\mu_n),
\]
where $\beta\le \alpha$ is understood componentwise. Hence
\[
M^X_\alpha(\widetilde\mu_n)-M^X_\alpha(\widehat\mu_n)
= \sum_{\beta<\alpha} \binom{\alpha}{\beta}(-\overline X_n)^{\alpha-\beta}M^X_\beta(\widehat\mu_n).
\]
Since $\widehat\mu_n$ is supported in $B_{d_x}(0,1)$,
\[
|M^X_\beta(\widehat\mu_n)|\le 1.
\]
Also, because $\|\overline X_n\|\le 1$ and $|\alpha-\beta|\ge 1$ for every $\beta<\alpha$,
\[
|(-\overline X_n)^{\alpha-\beta}|
\le \|\overline X_n\|^{|\alpha-\beta|}
\le \|\overline X_n\|.
\]
Therefore
\[
|M^X_\alpha(\widetilde\mu_n)-M^X_\alpha(\widehat\mu_n)|
\le \Big(\sum_{\beta<\alpha}\binom{\alpha}{\beta}\Big)\|\overline X_n\|
= (2^{|\alpha|}-1)\|\overline X_n\|.
\]
The same argument on the $\nu$-side gives, for every $\gamma\in\mathbb N_0^{d_y}$,
\[
|M^Y_\gamma(\widetilde\nu_n)-M^Y_\gamma(\widehat\nu_n)|
\le (2^{|\gamma|}-1)\|\overline Y_n\|.
\]
Moreover, since $\widetilde\mu_n$ and $\widetilde\nu_n$ are supported in radius-$2$ balls,
\[
|M^X_\alpha(\widetilde\mu_n)|\le 2^{|\alpha|},
\qquad
|M^Y_\gamma(\widetilde\nu_n)|\le 2^{|\gamma|},
\]
while $|M^X_\alpha(\widehat\mu_n)|\le 1$ and $|M^Y_\gamma(\widehat\nu_n)|\le 1$. Now let $U$ be a monomial appearing in $\MGW_{r,k}$. If $U$ is linear in a single moment, the preceding bounds give
\[
|U(\widehat\mu_n,\widehat\nu_n)-U(\widetilde\mu_n,\widetilde\nu_n)|
\le C_{r,k}(\|\overline X_n\|+\|\overline Y_n\|).
\]
If $U$ is a product of two moments, set $U=ab$, where $a,a'$ are the corresponding first moments evaluated at $(\widehat\mu_n,\widehat\nu_n)$ and $(\widetilde\mu_n,\widetilde\nu_n)$, and likewise $b,b'$ for the second factor. Then
\[
|ab-a'b'|\le |a-a'|\,|b| + |a'|\,|b-b'|.
\]
The factor $|b|$ is bounded by $1$, and the factor $|a'|$ is bounded by $2^{4kr}$ because each individual moment order is at most $4kr$. Combining this with the one-moment centering bounds above shows that
\[
|U(\widehat\mu_n,\widehat\nu_n)-U(\widetilde\mu_n,\widetilde\nu_n)|
\le C_{r,k}(\|\overline X_n\|+\|\overline Y_n\|).
\]
Summing over the finitely many monomials in $\MGW_{r,k}$ gives
\[
|\MGW_{r,k}(\widehat\mu_n,\widehat\nu_n)-\MGW_{r,k}(\widetilde\mu_n,\widetilde\nu_n)|
\le C_{r,k,d_x,d_y}(\|\overline X_n\|+\|\overline Y_n\|).
\]
Finally,
\[
\mathbb E\|\overline X_n\|
\le \bigl(\mathbb E\|\overline X_n\|^2\bigr)^{1/2}
= \Bigl(\frac1n\mathbb E\|X_1\|^2\Bigr)^{1/2}
\le \frac1{\sqrt n},
\qquad
\mathbb E\|\overline Y_n\|
\le \frac1{\sqrt n},
\]
because $\mathbb E X_1=0$, $\mathbb E Y_1=0$, and $\|X_1\|,\|Y_1\|\le 1$ almost surely under the normalized hypotheses~(\ref{eq:normalized-hypotheses}). Hence,
\[
\mathbb E\bigl|\MGW_{r,k}(\widehat\mu_n,\widehat\nu_n)-\MGW_{r,k}(\widetilde\mu_n,\widetilde\nu_n)\bigr|
\le \frac{C_{r,k,d_x,d_y}}{\sqrt n}.
\]
\end{proof}

\section{Covering lemmas for the second-potential class}\label{app:covering}

This appendix contains the routine finite-dimensional covering facts used to control the class $\cG_Y$.

\begin{lemma}\label{lem:Ttheta-contraction}
Fix $\theta\in \Theta$. Then
\[
\mathsf C_\theta:\cF_X\to C(B_y)
\]
is $1$-Lipschitz with respect to the sup norm. That is, for all $f_1,f_2\in \cF_X$,
\[
\|\mathsf C_\theta f_1-\mathsf C_\theta f_2\|_{\infty,B_y}
\le
\|f_1-f_2\|_{\infty,B_x}.
\]
\end{lemma}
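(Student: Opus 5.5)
The plan is the standard argument that an infimum of translates is nonexpansive in the sup norm. Fix $\theta\in\Theta$, $f_1,f_2\in\cF_X$, and $y\in B_y$. All quantities are finite: $c_\theta(\cdot,y)$ is continuous on the compact ball $B_x$, and each $f_i$ is bounded by $C_{\mathrm{ext}}$. Hence $\mathsf C_\theta f_i(y)=\inf_{x\in B_x}\bigl(c_\theta(x,y)-f_i(x)\bigr)$ is a well-defined real number.

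The key pointwise step compares the two expressions inside the infimum. For every $x\in B_x$,
\[
c_\theta(x,y)-f_1(x)\le c_\theta(x,y)-f_2(x)+\|f_1-f_2\|_{\infty,B_x}.
\]
Taking the infimum over $x\in B_x$ on both sides gives
\[
\mathsf C_\theta f_1(y)\le \mathsf C_\theta f_2(y)+\|f_1-f_2\|_{\infty,B_x}.
\]
Exchanging the roles of $f_1$ and $f_2$ gives the reverse inequality, so
\[
|\mathsf C_\theta f_1(y)-\mathsf C_\theta f_2(y)|\le \|f_1-f_2\|_{\infty,B_x}.
\]
Taking the supremum over $y\in B_y$ then yields the claimed bound.

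It remains to note that $\mathsf C_\theta f\in C(B_y)$, so that the map indeed lands in the stated space. This follows from the Lipschitz estimate established earlier for elements of $\cG_Y$: each $\mathsf C_\theta f$ is $L_y$-Lipschitz on $B_y$. There is no genuine obstacle here. The only care needed is to use the same domain $B_x$ in the infimum for both functions, which the definition of $\mathsf C_\theta$ guarantees.
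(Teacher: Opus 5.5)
Your proposal is correct and follows essentially the same argument as the paper. The paper also bounds the integrand pointwise by $\|f_1-f_2\|_{\infty,B_x}$, takes the infimum over $x\in B_x$, and swaps $f_1$ and $f_2$. Your extra remarks on finiteness and continuity of $\mathsf C_\theta f$ (via the $L_y$-Lipschitz bound for elements of $\cG_Y$) are valid additions the paper leaves implicit.
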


\begin{proof}
Let $\delta:=\|f_1-f_2\|_{\infty,B_x}$. Then $f_1(x)\le f_2(x)+\delta$ for all $x\in B_x$, hence
\[
c_\theta(x,y)-f_1(x)
\ge
c_\theta(x,y)-f_2(x)-\delta
\]
for every $(x,y)\in B_x\times B_y$. Taking the infimum over $x\in B_x$ gives
\[
\mathsf C_\theta f_1(y)\ge \mathsf C_\theta f_2(y)-\delta
\qquad
\text{for all }y\in B_y.
\]
Interchanging $f_1$ and $f_2$ gives the claim.
\end{proof}

\begin{lemma}\label{lem:volumetric-covering}
Assume that
\[
\Theta\subset B_m(0,R_\Theta)
\]
for some $m\in \N$ and $R_\Theta>0$. Then for every $\delta>0$,
\[
\Cov(\delta,\Theta,\|\cdot\|_2)
\le
\left(1+\frac{2R_\Theta}{\delta}\right)^m.
\]
\end{lemma}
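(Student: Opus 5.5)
The plan is the standard volumetric comparison argument in $\R^m$. First, fix a maximal $\delta$-separated subset $\{\theta_1,\dots,\theta_N\}$ of $\Theta$ with respect to $\|\cdot\|_2$. Such a set exists and is finite, because any $\delta$-separated subset of the bounded set $B_m(0,R_\Theta)$ is finite; the volume argument below bounds its size uniformly, so Zorn's lemma is not needed. By maximality, every $\theta\in\Theta$ lies within distance less than $\delta$ of some $\theta_i$. Otherwise $\theta$ could be added to the set without destroying separation. Hence the closed $\delta$-balls around the $\theta_i$ cover $\Theta$, and
\[
\Cov(\delta,\Theta,\|\cdot\|_2)\le N.
\]
It therefore suffices to bound $N$.

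Next, the balls $B_m(\theta_i,\delta/2)$ are pairwise disjoint, since the centers are $\delta$-separated. Each of these balls lies inside $B_m(0,R_\Theta+\delta/2)$, because $\|\theta_i\|\le R_\Theta$. Comparing Lebesgue volumes, and using that $\mathrm{vol}(B_m(0,t))=\omega_m t^m$, gives
\[
N\,\omega_m(\delta/2)^m\le \omega_m(R_\Theta+\delta/2)^m.
\]
Dividing through yields
\[
N\le\left(\frac{R_\Theta+\delta/2}{\delta/2}\right)^m=\left(1+\frac{2R_\Theta}{\delta}\right)^m,
\]
which is exactly the claimed bound.

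No step is a real obstacle. The only point to check is that the covering uses centers in $\Theta$ itself, which the maximal-packing construction guarantees. This is in any case harmless for the use made of the lemma in Proposition~\ref{prop:g-entropy}, which needs only the resulting $\log(C/\varepsilon)$ growth. Finally, the hypothesis $\Theta\subset B_m(0,R_\Theta)$ holds for our $\Theta=\Theta_+\times\Theta_-$ with $m=J$, since $\Theta$ is a product of compact intervals whose endpoints depend only on $r,k,d_x,d_y$.
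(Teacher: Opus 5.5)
Your proposal is correct and follows the paper's proof essentially verbatim. Both arguments take a maximal $\delta$-separated subset of $\Theta$, which is automatically a $\delta$-net, and compare the volumes of the pairwise disjoint balls $B_m(\theta_i,\delta/2)$ inside $B_m(0,R_\Theta+\delta/2)$ to get $N\le(1+2R_\Theta/\delta)^m$.
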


\begin{proof}
Choose a maximal $\delta$-separated subset $\{\theta_1,\dots,\theta_N\}\subset \Theta$. The balls
$B_m(\theta_i,\delta/2)$ are pairwise disjoint and all contained in $B_m(0,R_\Theta+\delta/2)$. Comparing
volumes gives
\[
N\,\mathrm{Vol}(B_m(0,\delta/2))
\le
\mathrm{Vol}(B_m(0,R_\Theta+\delta/2)),
\]
hence
\[
N
\le
\left(1+\frac{2R_\Theta}{\delta}\right)^m.
\]
Maximality also implies that $\{\theta_i\}$ is a $\delta$-net of $\Theta$, so the same bound holds for the
covering number.
\end{proof}

\begin{lemma}\label{lem:parametric-lipschitz}
Assume that
\[
c_{u,v}(x,y)
=
\sum_{i=1}^{\ell}u_i P_i(x,y)
-
\sum_{j=1}^{J-\ell}v_j P_{\ell+j}(x,y),
\qquad
\theta=(u,v)\in \Theta.
\]
Set
\[
C_{\mathrm{basis}}:=\max_{1\le i\le J}\|P_i\|_{\infty,B_x\times B_y},
\qquad
L_{\mathrm{par}}:=\max\{1,C_{\mathrm{basis}}\sqrt J\}.
\]
Then for all $\theta,\theta'\in \Theta$,
\[
\|c_\theta-c_{\theta'}\|_{\infty,B_x\times B_y}
\le
L_{\mathrm{par}}\|\theta-\theta'\|_2.
\]
Consequently, for every $f\in C(B_x)$,
\[
\|\mathsf C_\theta f-\mathsf C_{\theta'}f\|_{\infty,B_y}
\le
L_{\mathrm{par}}\|\theta-\theta'\|_2.
\]
\end{lemma}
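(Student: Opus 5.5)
The plan is to prove the two inequalities in sequence: first a pointwise bound on $c_\theta-c_{\theta'}$ using linearity in $\theta$, then a transfer of that bound through the infimum defining $\mathsf C_\theta$.

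\textbf{Step 1 (cost bound).} Write $\theta=(u,v)$ and $\theta'=(u',v')$, and collect the coefficient differences into one vector $w:=(u-u',\,-(v-v'))\in\R^J$, so that $\|w\|_2=\|\theta-\theta'\|_2$. Since $c_\theta$ is linear in $\theta$, for every $(x,y)\in B_x\times B_y$ we have
\[
c_\theta(x,y)-c_{\theta'}(x,y)=\sum_{i=1}^J w_i P_i(x,y).
\]
Cauchy--Schwarz then gives
\[
\bigl|c_\theta(x,y)-c_{\theta'}(x,y)\bigr|\le \|w\|_2\Bigl(\sum_{i=1}^J P_i(x,y)^2\Bigr)^{1/2}\le \|\theta-\theta'\|_2\,C_{\mathrm{basis}}\sqrt J .
\]
Finally use $C_{\mathrm{basis}}\sqrt J\le L_{\mathrm{par}}$. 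The constant $C_{\mathrm{basis}}$ is finite because each $P_i$ is a polynomial and $B_x\times B_y$ is compact.

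\textbf{Step 2 (transforms).} This is the same argument as Lemma~\ref{lem:Ttheta-contraction}, with the roles of cost and potential interchanged. Set $\delta:=\|c_\theta-c_{\theta'}\|_{\infty,B_x\times B_y}$. For every $x\in B_x$ and $y\in B_y$,
\[
c_\theta(x,y)-f(x)\ge c_{\theta'}(x,y)-f(x)-\delta .
\]
Taking the infimum over $x\in B_x$ gives $\mathsf C_\theta f(y)\ge \mathsf C_{\theta'}f(y)-\delta$. Exchanging $\theta$ and $\theta'$ gives the reverse inequality, so $\|\mathsf C_\theta f-\mathsf C_{\theta'}f\|_{\infty,B_y}\le\delta$, and Step 1 bounds $\delta$ by $L_{\mathrm{par}}\|\theta-\theta'\|_2$.

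\textbf{Potential obstacle: finiteness of the infima.} The only point needing care is that both infima are finite, so that subtracting $\delta$ is legitimate. This holds because $f\in C(B_x)$ is bounded on the compact set $B_x$ and $c_\theta$ is continuous there. No real difficulty is expected beyond this check.
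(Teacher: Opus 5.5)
Your proof is correct and follows essentially the same route as the paper. The paper bounds $|c_\theta-c_{\theta'}|$ by $C_{\mathrm{basis}}\|\theta-\theta'\|_1\le C_{\mathrm{basis}}\sqrt J\,\|\theta-\theta'\|_2$ rather than applying Cauchy--Schwarz in one step, which gives the same constant, and it then passes the sup-norm bound through the infimum defining $\mathsf C_\theta$ exactly as in your Step 2, leaving implicit the finiteness check you make explicit.
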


\begin{proof}
For $(x,y)\in B_x\times B_y$,
\[
c_\theta(x,y)-c_{\theta'}(x,y)
=
\sum_{i=1}^{\ell}(u_i-u_i')P_i(x,y)
-
\sum_{j=1}^{J-\ell}(v_j-v_j')P_{\ell+j}(x,y).
\]
Therefore
\[
|c_\theta(x,y)-c_{\theta'}(x,y)|
\le
C_{\mathrm{basis}}\|\theta-\theta'\|_1
\le
C_{\mathrm{basis}}\sqrt J\,\|\theta-\theta'\|_2
\le
L_{\mathrm{par}}\|\theta-\theta'\|_2.
\]
Taking the supremum over $(x,y)$ proves the first estimate. The second follows from the definition of
$\mathsf C_\theta$ by taking infima over $x\in B_x$.
\end{proof}

\begin{proposition}\label{prop:G-covering}
For every $\varepsilon>0$,
\[
\Cov\bigl(\varepsilon,\cG_Y,\|\cdot\|_{\infty,B_y}\bigr)
\le
\Cov\!\left(\frac{\varepsilon}{2L_{\mathrm{par}}},\Theta,\|\cdot\|_2\right)
\Cov\!\left(\frac{\varepsilon}{2},\cF_X,\|\cdot\|_{\infty,B_x}\right).
\]
\end{proposition}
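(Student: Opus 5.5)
The plan is to build an explicit $\varepsilon$-net for $\cG_Y$ as a product of a net in the parameter set $\Theta$ and a net in $\cF_X$, and to control the approximation error with a triangle inequality. Two earlier results supply the estimates: the sup-norm contraction of $\mathsf C_\theta$ in the $f$-variable (Lemma~\ref{lem:Ttheta-contraction}) and the Lipschitz dependence of $\mathsf C_\theta f$ on $\theta$ (Lemma~\ref{lem:parametric-lipschitz}). We may assume both covering numbers on the right-hand side are finite, since otherwise there is nothing to prove.

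\textbf{Step 1 (the nets).} Let $N_1:=\Cov(\varepsilon/(2L_{\mathrm{par}}),\Theta,\|\cdot\|_2)$ and choose $\theta_1,\dots,\theta_{N_1}$ so that every $\theta\in\Theta$ lies within $\ell^2$-distance $\varepsilon/(2L_{\mathrm{par}})$ of some $\theta_a$. Let $N_2:=\Cov(\varepsilon/2,\cF_X,\|\cdot\|_{\infty,B_x})$ and choose $f_1,\dots,f_{N_2}$ similarly, now at sup-distance $\varepsilon/2$.

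A small technicality concerns where the centers live. The covering number is defined via balls whose centers need not belong to the set. If a center $\theta_a$ lies outside $\Theta$, then $\mathsf C_{\theta_a}$ is still well defined, because $c_\theta$ is given by the same linear formula for every $\theta\in\R^J$. Likewise, if a center $f_b$ lies outside $\cF_X$, we only need $f_b\in C(B_x)$; any bounded center can be replaced by a continuous one at arbitrarily small extra cost, or we can use nets with centers inside the sets at the price of doubling the radius.

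To avoid this issue, the cleanest route is to observe that both Lemma~\ref{lem:Ttheta-contraction} and Lemma~\ref{lem:parametric-lipschitz} hold for arbitrary bounded $f$ and arbitrary $\theta\in\R^J$. Their proofs never use semiconcavity or membership in $\Theta$, so the argument below needs no modification.

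\textbf{Step 2 (the covering).} Let $g=\mathsf C_\theta f\in\cG_Y$. Pick $a$ with $\|\theta-\theta_a\|_2\le\varepsilon/(2L_{\mathrm{par}})$ and $b$ with $\|f-f_b\|_{\infty,B_x}\le\varepsilon/2$. Then
\[
\|\mathsf C_\theta f-\mathsf C_{\theta_a}f_b\|_{\infty,B_y}
\le
\|\mathsf C_\theta f-\mathsf C_{\theta_a}f\|_{\infty,B_y}
+
\|\mathsf C_{\theta_a}f-\mathsf C_{\theta_a}f_b\|_{\infty,B_y}
\le
L_{\mathrm{par}}\|\theta-\theta_a\|_2+\|f-f_b\|_{\infty,B_x}
\le\varepsilon .
\]
Here the first term is bounded by Lemma~\ref{lem:parametric-lipschitz} and the second by Lemma~\ref{lem:Ttheta-contraction}. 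Hence the $N_1N_2$ functions $\mathsf C_{\theta_a}f_b$ form an $\varepsilon$-net of $\cG_Y$, which gives the stated product bound.

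The only point requiring care is the extension of the two lemmas just mentioned. The contraction inequality uses only $f_1\le f_2+\delta$ and taking infima. The parametric estimate uses only the linear form of $c_\theta$ together with the bound $C_{\mathrm{basis}}$ on the $P_i$ over $B_x\times B_y$. Both arguments are therefore valid verbatim in the extended setting, so I do not expect a genuine obstacle.
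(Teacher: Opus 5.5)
Your proof is correct and follows the paper's argument. Both combine a net of $\Theta$ at scale $\varepsilon/(2L_{\mathrm{par}})$ with a net of $\cF_X$ at scale $\varepsilon/2$ via the triangle inequality, using Lemma~\ref{lem:parametric-lipschitz} and Lemma~\ref{lem:Ttheta-contraction}; the paper's $\eta$-net of $\mathsf C_{\theta'}(\cF_X)$ is just the image of your $\cF_X$-net under $\mathsf C_{\theta'}$. Your observation that both lemmas hold verbatim for arbitrary $\theta\in\R^J$ and bounded $f$ is a useful addition, since it covers net centers lying outside $\Theta$ or $\cF_X$, which the paper glosses over when it takes a minimal-cardinality net inside $\Theta$.
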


\begin{proof}
Set
\[
\delta:=\frac{\varepsilon}{2L_{\mathrm{par}}},
\qquad
\eta:=\frac{\varepsilon}{2}.
\]
Choose a $\delta$-net $\mathcal A_{\Theta,\delta}\subset \Theta$ of minimal cardinality. For each $\theta\in \mathcal A_{\Theta,\delta}$,
Lemma~\ref{lem:Ttheta-contraction} shows that
\[
\Cov\bigl(\eta,\mathsf C_\theta(\cF_X),\|\cdot\|_{\infty,B_y}\bigr)
\le
\Cov\bigl(\eta,\cF_X,\|\cdot\|_{\infty,B_x}\bigr).
\]
Let $\mathcal H_{\theta,\eta}$ be an $\eta$-net for $\mathsf C_\theta(\cF_X)$ of this cardinality, and set
\[
\mathcal H_\eta:=\bigcup_{\theta\in \mathcal A_{\Theta,\delta}} \mathcal H_{\theta,\eta}.
\]
We claim that $\mathcal H_\eta$ is an $\varepsilon$-net of $\cG_Y$. Indeed, if $g=\mathsf C_\theta f\in \cG_Y$, choose
$\theta'\in \mathcal A_{\Theta,\delta}$ with $\|\theta-\theta'\|_2\le \delta$. By
Lemma~\ref{lem:parametric-lipschitz},
\[
\|\mathsf C_\theta f-\mathsf C_{\theta'}f\|_{\infty,B_y}
\le
L_{\mathrm{par}}\delta
=
\frac{\varepsilon}{2}.
\]
Then choose $h\in \mathcal H_{\theta',\eta}$ with
\[
\|\mathsf C_{\theta'}f-h\|_{\infty,B_y}\le \eta=\frac{\varepsilon}{2}.
\]
Hence
\[
\|g-h\|_{\infty,B_y}
\le
\|\mathsf C_\theta f-\mathsf C_{\theta'}f\|_{\infty,B_y}
+
\|\mathsf C_{\theta'}f-h\|_{\infty,B_y}
\le
\varepsilon.
\]
Therefore
\[
\Cov\bigl(\varepsilon,\cG_Y,\|\cdot\|_{\infty,B_y}\bigr)
\le
|\mathcal A_{\Theta,\delta}|\,
\Cov\bigl(\eta,\cF_X,\|\cdot\|_{\infty,B_x}\bigr)
\le
\Cov\bigl(\delta,\Theta,\|\cdot\|_2\bigr)
\Cov\bigl(\eta,\cF_X,\|\cdot\|_{\infty,B_x}\bigr).
\]
Using the choices of $\delta$ and $\eta$ gives the claim.
\end{proof}

\end{document}